\newtheorem{thm}{Theorem}[section]
\newtheorem{dfi}{Definition}[section]
\newtheorem{prop}{Proposition}[section]
\newtheorem{lm}{Lemma}[section]
\newtheorem{cor}{Corollary}[section]
\newtheorem{rem}{Remark}[section]
\newtheorem{qu}{Question}
\newcommand{\mc}[1]{\mathcal{#1}}
\newcommand{\rr}{\mathbb{R}}
\newcommand{\zz}{\mathbb{Z}}
\newcommand{\cc}{\mathbb{C}}
\newcommand{\al}{\alpha}
\newcommand{\dl}{\delta}
\newcommand{\ep}{\varepsilon}
\newcommand{\gm}{\gamma}
\newcommand{\lmd}{\lambda}
\newcommand{\Om}{\Omega}
\newcommand{\sg}{\sigma}
\newcommand{\tht}{\theta}
\newcommand{\kp}{\kappa}
\newcommand{\ey}{\frac{1}{2}}
\newcommand{\pinf}{+\infty}
\newcommand{\cx}{\mc{X}}
\newcommand{\mcc}{\mc{C}}
\newcommand{\kc}{K_{\mcc}}
\newcommand{\cs}{\mc{S}}
\newcommand{\bw}{\mathbf{w}}
\newcommand{\bn}{\mathbf{n}}
\newcommand{\bk}{\mathbf{k}}
\newcommand{\ik}{I_{\mathbf{k}}}
\newcommand{\lk}{L_{\mathbf{k}}}
\newcommand{\uk}{U_{\mathbf{k}}}
\newcommand{\kk}{K_{\mathbf{k}}}
\newcommand{\sbb}{\bar{s}}
\newcommand{\qb}{\bar{q}}
\newcommand{\qe}{\bar{q}^{\ep}}
\newcommand{\pw}{\frac{2}{2+\al}}
\newcommand{\pwr}{\frac{2+\al}{2}}
\newcommand{\xl}{x^{\lmd}}
\newcommand{\xln}{x^{\lmd_n}}
\newcommand{\ql}{q^{\lmd}}
\newcommand{\qln}{q^{\lmd_n}}
\newcommand{\yln}{y^{\lmd_n}}
\newcommand{\dql}{\frac{d}{dt}q^{\lmd_n}}
\newcommand{\tp}{\tht^+}
\newcommand{\lb}{\bar{L}}
\newcommand{\gb}{\bar{\gm}}
\newcommand{\xe}{x^{\ep}}
\newcommand{\xh}{\hat{\mc{X}}}
\newcommand{\tb}{\bar{T}}
\newcommand{\omh}{\hat{\Omega}}
\newcommand{\dx}{\dot{x}}
\newcommand{\xd}{\dot{x}}
\newcommand{\qd}{\dot{q}}
\newcommand{\jk}{J_{\bk}}
\newcommand{\td}{\dot{\tht}}
\newcommand{\rb}{\bar{\rho}}
\begin{document}

	\title[Shape Space Figure-$8$ Solution]{Shape Space Figure-$8$ Solution of Three Body Problem with Two Equal Masses}
	\author{Guowei Yu}
	\email{yu@math.utoronto.ca}
	\date{04/11/2016}
	
	\address{Department of Mathematics, University of Toronto}
	
	\begin{abstract} 
		In a preprint by Montgomery \cite{Mo99}, the author attempted to prove the existence of a shape space Figure-$8$ solution of the Newtonian three body problem with two equal masses (it looks like a figure $8$ in the shape space, which is different from the famous Figure-$8$ solution with three equal masses \cite{CM00}). Unfortunately there is an error in the proof and the problem is still open.

		Consider the $\alpha$-homogeneous Newton-type potential, $1/r^{\alpha},$ using action minimization method, we prove the existence of this solution, for $\alpha \in (1,2)$; for $\alpha=1$ (the Newtonian potential), an extra condition is required, which unfortunately seems hard to verify at this moment.
	\end{abstract}
	
	\maketitle
	
\section{Introduction} \label{sec:intro}

The motion of three point masses $m_j>0$, $j=1,2,3$, in a plane under $\al$-homogeneous Newtonian-like potential is described by the following equation

\begin{equation} \label{three body}
m_j \ddot{x}_j = \frac{\partial}{\partial x_j} U(x), \quad j = 1,2,3,
\end{equation}
where $x=(x_1, x_2, x_3) \in \cc^3$ with $x_j \in \cc$ representing the position of $m_j,$ and $U(x)$ is the (negative) potential energy 
\begin{equation}
\label{potential} U(x) = \sum_{1 \le j < k \le 3} \frac{m_j m_k} { \al |x_j - x_k|^{\al}}.
\end{equation}

In the literature, the potential is usually referred as a \emph{strong force} when $\al \ge 2$ and a \emph{weak force} when $0 < \al < 2$. The Newtonian potential corresponds to $\al =1$. In this paper we only consider $1 \le \al <2$ and to emphasize the specialty of the Newtonian potential, the term \emph{weak force} will only refer to cases $1< \al <2$.

Equation \eqref{three body} is the Euler-Lagrange equation of the action functional 
\begin{equation}
\label{action} A_{L} (x, [T_1, T_2]) = \int_{T_1}^{T_2} L(x, \dot{x}) \,dt, \;  x \in H^1([T_1,T_2], \cc^3),
\end{equation}
where $ L(x, \dot{x}) = K(\dot{x}) + U(x)$ is the Lagrange and $ K( \dot{x}) = \frac{1}{2} \sum_{j =1}^{3} m_j | \dot{x}_j|^2$ is the kinetic energy. For simplicity, set $A_L(x, T) = A_L(x, [0, T]),$ for any $T>0$.

As the three body problem is invariant under linear translation, the center of mass can be fixed at the origin, and we set
$$ \cx = \{ x \in \cc^3: \sum_{j=1}^3 m_j x_j =0 \}.$$
The subset of collision-free configurations will be denoted by $\xh = \{ x \in \cx: x_j \ne x_k,\; \forall 1 \le j < k \le 3 \}$.

In the celebrated paper by Chenciner and Montgomery \cite{CM00}, the famous Figure-8 solution (three equal masses chase each other on a fixed loop in a plane with the shape of figure $8$) was proved. It belongs to a special class of solutions now known as simple choreographies, see \cite{CGMS02}, \cite{Y15c}.

An interesting story was told in the appendix of \cite{CM00} regarding the origin of the paper: A preprint titled ``Figure $8$s with Three Bodies'' by Montgomery \cite{Mo99}, was submitted to \emph{Nonlinearity} and Chenciner was asked to be the referee. In the preprint, Montgomery attempted to prove the existence of two zero angular momentum \emph{reduced periodic} solutions (periodic after modulo a proper rotation) of the Newtonian three body problem. One under the condition that all three masses are equal and the other under the condition that two masses are equal to each other. While the first solution eventually leads to the proof of the Figure-$8$ solution in \cite{CM00}, the proof of the second solution was found to be an error. 

What's interesting is that the `Figure $8$s' in the title of the preprint was referring to the second solution, because once we established its existence, it should have the following features: if one puts the system in a moving frame such that the line between the two equal masses is fixed and further apply a time-dependent homothety to the motion so that they are fixed points on the lines all the time, then the third mass moves along a curve having the topological type of a figure $8$ with each circle of the $8$ surrounding one of the two equal masses. This means after projecting the solution to the shape space, it is a figure $8$ surrounding two of the binary collision rays, see Section \ref{shape}. Because of this, this solution will be called the \emph{shape space Figure-8 solution} throughout the paper. 

The approach proposed by Montgomery in \cite{Mo99} was to find the desired solution as a minimizer of the action functional under certain symmetric and topological constraints. The main difficulty is to show the minimizer is collision-free. In the last fifteen years, lots of progresses have been made to overcome this. We briefly summarize in the following.  
\begin{itemize}
    \item \textbf{Local deformation}: based on asymptotic analysis near an isolated collision and the `` blow-up '' technique, one tries to show after a local deformation of the collision path near the isolated collision, there is a new path with strictly smaller action value. For the details see \cite{Mo98}, \cite{Ve02}, \cite{Ce02} and \cite{FT04}. 

	\item \textbf{Level estimate}: one gives a sharp lower bound estimate of the action functional among all the collision paths in the set of admissible paths and then try to find a collision-free test path within the set of admissible paths, whose action value is strictly smaller than the previous  lower bound estimate. See \cite{CM00}, \cite{Ch03a}, \cite{Ch08} and \cite{Ch13}.	
\end{itemize}
Despite of all the progresses, a proof of the shape space Figure-$8$ solution is still missing for any $1 \le \al<2$. 

In this paper, based on some new local deformation result near an isolated binary collision (see Section \ref{bicoll}), we show the approach laid out by Montgomery in \cite{Mo99} can be carried out and a shape space Figure-$8$ solution exists, for weak force potentials ($1<\al<2$); for the Newtonian potential ($\al=1$), an extra condition \eqref{assume} is required. To be precise, we have the following results. 
\begin{thm} \label{thm 1}
     Assume $m_1 =m_2$. When $1 < \al < 2$, for any $\tb >0$,  there is a zero angular momentum periodic solution $x \in C^2(\rr / \bar{T}\zz, \xh)$ of equation \eqref{three body} satisfying: 
     \begin{enumerate}
      \item[(a).] when $t \in \{0, \tb/2 \}$, $x(t) \in \rr^3 \subset \cc^3$ satisfying
      $$ x_1(0) < x_2(0) < x_3(0), \quad  x_3(\tb/2) < x_1(\tb/2) < x_2 (\tb/2);$$
      \item[(b).] when $t \in \{\tb/4, 3\tb /4 \}$, $x(t)$ is an Euler configuration with $x_3(t) =0$ and $ x_1(t) = -x_2(t);$ 
      \item[(c).] when $t \in [0,\tb)$, $x(t)$ experiences exactly four collinear configurations at the moments $t =0, \tb/4, \tb/2,$ and $3\tb/4;$
      \item[(d).] for any $t \in \rr/\bar{T}\zz$,
      $$ (x_1, x_2, x_3)(t) = (-x_2, -x_1, -x_3)(\tb/2 -t);$$
      $$ (x_1, x_2, x_3)(t) = (\bar{x}_1, \bar{x}_2, \bar{x}_3)(\tb -t), $$
      where $\bar{x}_j$ is the complex conjugate of $x_j$.
     \end{enumerate}

     When $\al =1$, the above results hold if the following condition is satisfied
     \begin{equation}
 	\label{assume} A_L(x^*, \tb/4) > \inf \{ A_L(y, \tb/4): y \in \Om \},
 	\end{equation} 
 	where $x^*$ is a Schubart solution $($it is a collinear collision solution defined in Definition \ref{Schubart}$)$ and $\Om$ is the set of admissible paths defined as in Definition \ref{dfn:Om}. 
\end{thm}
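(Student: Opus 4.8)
\emph{Outline of the proof.} The plan is to realize the solution as a minimizer of the action \eqref{action} over the set $\Om$ of admissible paths on the fundamental domain $[0,\tb/4]$, and then to extend it to a loop on $\rr/\tb\zz$ using the symmetries in (d). Concretely, $\Om$ consists of the $H^1$ paths $x\colon[0,\tb/4]\to\cx$ with $x(0)\in\rr^3$, $x_1(0)\le x_2(0)\le x_3(0)$, with $x_1(\tb/4)=-x_2(\tb/4)$, $x_3(\tb/4)=0$, and such that the signed area of the triangle $x_1(t)x_2(t)x_3(t)$ keeps a fixed sign for all $t$; this last, topological, constraint is what will yield (c) and make the extended loop a shape-space figure $8$ around two of the binary-collision rays. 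The group in (d) is generated by two isometries of the problem, namely complex conjugation (reflection through the collinear equator of the shape sphere) and the rotation by $\pi$ composed with the transposition of $m_1$ and $m_2$; hence any minimizer on $[0,\tb/4]$ meeting the above boundary data extends to a loop on $\rr/\tb\zz$. Moreover, because $m_1=m_2$, the transversality (natural boundary) conditions satisfied by a free minimizer are that $\dot x_j(0)$ be purely imaginary and $\dot x_1(\tb/4)=\dot x_2(\tb/4)$, and these are exactly what makes the extension $C^1$, hence $C^2$ by \eqref{three body}. Zero angular momentum need not be imposed: a loop obeying the first identity in (d) has angular momentum equal to its negative.

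\emph{Step 1 (a minimizer exists).} Since $1\le\al<2$, the action of \eqref{action} on $[0,\tb/4]$ is finite along some collision-free paths in $\Om$, is weakly lower semicontinuous, and $\Om$ is weakly closed in $H^1$ (the endpoint relations are closed convex conditions and the sign-of-area inequality passes to uniform limits). The kinetic term bounds $\|\dot x\|_{L^2}$ along a minimizing sequence, and the size cannot blow up, since the shape prescribed at $\tb/4$ lies at a fixed positive distance from every collinear configuration with $x_1\le x_2\le x_3$; so the direct method yields a minimizer $\qb\in\Om$, a priori passing through collisions on a null set of times.

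\emph{Step 2 (the minimizer is collision-free --- the crux).} Interior binary collisions of $\qb$ in $(0,\tb/4)$ are excluded by the blow-up/local-deformation (Marchal-type) argument, and an interior total collision asymptotic to the Lagrange (equilateral) central configuration is excluded by a local deformation exploiting the available rotation; both deformations leave the sign-of-area constraint intact. The only collisions possible at the endpoints are total ones at $\tb/4$ (forced by $x_1=-x_2$, $x_3=0$) and, at $t=0$ and its symmetric images, the binary collisions $x_1=x_2$ or $x_2=x_3$; the latter are removed by the new local-deformation result near an isolated binary collision proved in Section~\ref{bicoll}. After these removals, the only surviving possibility is a collision near which --- by the asymptotic analysis at a triple collision together with the collinearity forced at $\tb/4$ and the sign-of-area constraint --- the motion is collinear, i.e.\ is a piece of a Schubart solution $x^*$; for such a $\qb$ one has $A_L(\qb,\tb/4)\ge A_L(x^*,\tb/4)$. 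Finally, for (c) one checks that $\qb$ does not touch the equator at an interior time: it is not collinear on a subinterval (a collision-free collinear arc cannot join a configuration with $x_1\le x_2\le x_3$ to the prescribed Euler configuration, since the middle body would have to change), and a transverse contact would be a one-sided billiard bounce off the totally geodesic collinear locus, which can be shortcut to strictly lower the action.

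\emph{Step 3 (beating the Schubart level, and conclusion).} It remains to produce a collision-free competitor $y\in\Om$ with $A_L(y,\tb/4)<A_L(x^*,\tb/4)$; this excludes the Schubart alternative of Step~2 and makes $\qb$ a free critical point of the action. For $1<\al<2$ this is a level estimate: one bounds $A_L(x^*,\tb/4)$ below through the Sundman-type asymptotics of $x^*$ near its binary collisions and bounds $A_L$ above along an explicit non-collinear test path. For $\al=1$ the corresponding inequality is precisely the hypothesis \eqref{assume}. Granting it, $\qb$ solves \eqref{three body} on $(0,\tb/4)$, is smooth up to the endpoints, and by the transversality discussion extends to a collision-free $x\in C^2(\rr/\tb\zz,\xh)$ satisfying (a), (b), (d) by construction and (c) by the now-strict sign-of-area constraint. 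The main obstacle is Step~2--Step~3: the genuinely new input is the boundary binary-collision deformation of Section~\ref{bicoll}, while the delicate input --- out of reach at present in the Newtonian case --- is the level estimate against the Schubart action, which is why for $\al=1$ the theorem stays conditional on \eqref{assume}.
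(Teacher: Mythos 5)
Your overall architecture (minimize on $[0,\tb/4]$, exclude collisions by local deformation, then extend by the two symmetries in (d), with the transversality conditions at $t=0$ and $t=\tb/4$ giving the $C^1$ matching and zero angular momentum) matches the paper's, and your inertial-frame transversality argument is a legitimate shortcut past the paper's area-rule step. But Steps 2--3 contain a genuine gap that changes the nature of the result for $1<\al<2$. You assert that after the local deformations the ``only surviving possibility'' is a collinear, Schubart-type collision path, and that for $1<\al<2$ this is excluded by a \emph{level estimate}: a lower bound on $A_L(x^*,\tb/4)$ via Sundman asymptotics beaten by an explicit test path. No such estimate is given, and this is exactly the kind of estimate identified in the introduction as unavailable for collinear collision paths (which is the whole reason the Newtonian case is left conditional on \eqref{assume}). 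The paper's actual mechanism for $1<\al<2$ is purely local and requires no comparison with any Schubart level: the new binary-collision deformation of Section~\ref{bicoll} (Proposition~\ref{deform21}) is valid under the non-strict condition $|\phi_3-\tht_3^+|\le\pi$ of \eqref{eqn:binary} when $\al\in(1,2)$, so even a $b_{23}$ collision at $t=0$ whose ejection direction is collinear ($\tht_3^+=\pi$) can be deformed along the real axis (take $\phi_3=0$, $\phi_2=\pi$), preserving the boundary condition $x_1<x_2<x_3$ and strictly lowering the action. Only at $\al=1$ does the strict inequality fail precisely at $\tht_3^+=\pi$, and that is where the Schubart alternative and hypothesis \eqref{assume} enter. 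As written, your proof of the unconditional weak-force statement rests on an unproved (and, as far as is known, hard) global estimate, so it does not establish Theorem~\ref{thm 1} for $1<\al<2$.

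Two further points need attention. First, you build the one-sided sign-of-area constraint into $\Om$, whereas Definition~\ref{dfn:Om} does not; once it is a constraint of the admissible class, every deformation you use must be checked admissible, and the Marchal-type averaging you invoke for interior binary collisions averages over directions that push the shape below the syzygy plane, so it is not obviously compatible (the paper sidesteps this by keeping $\Om$ free of topological constraints, treating $w_3\ge 0$ only as a normalization of the chosen minimizer, and using the single-direction deformations of Propositions~\ref{deform1}, \ref{deform3}, \ref{deform21}). Relatedly, your argument for (c) removes only transversal contacts with the collinear locus; a tangential interior contact produces no corner and must be excluded separately (e.g.\ by the zero-angular-momentum fact that $w_3=\dot w_3=0$ forces collinear motion), which your sketch does not do. Second, the total collision at $t=\tb/4$ is noted as possible but never excluded; the paper's Lemma~\ref{lm2} handles it with Proposition~\ref{deform1} by deforming toward a suitably rotated Euler configuration, and some such argument is needed in your scheme as well.
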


Only \textbf{local deformation} are used throughout the paper to rule out collisions and in some sense Theorem \ref{thm 1} is the best result, one could expect based on this method, see Remark \ref{rem: Gordon}. It will be really interesting, if someone can verify or disprove condition \eqref{assume} for certain choices of masses. At this moment, this seems hard to do. In general, \textbf{level estimate} used in \cite{Ch08} and \cite{Ch13} can give a good lower bound estimate of the action value of a collision path. However it is not the case when the collision path is collinear (all three masses stay on a single line all the time), which is exactly the case for a Schubart solution. 


\section{Geometry of the Shape Space} \label{shape}

A proof of Theorem \ref{thm 1} will be given in this section. For this, first we briefly recall the shape space and the syzygy sequences of the planar three body problem. The details can be found in a series of papers by Montgomery: \cite{Mo96}, \cite{Mo98}, \cite{Mo99}, \cite{Mo02}, as well as the beautiful expository article \cite{Mo15} by him. 

The configuration space $\cx$ has four (real) dimension. After modulo rotations given by $SO(2)$ action: $ e^{i \tht}(x_1, x_2, x_3) = (e^{i \tht}x_1, e^{i \tht} x_2, e^{i \tht} x_3),$ the quotient space $\mcc = \cx / SO(2)$, representing the congruence classes of triangles formed by the three masses, is homeomorphic to $\rr^3$ and will be referred as \emph{the shape space}. 

The above process can be realized by a map 
$$ \pi: \cx \to \mcc \approx \rr^3; \; \pi(x) = \bw =(w_1, w_2, w_3).$$
Hence $w_3$ represents the signed area of the corresponding triangle in the configuration space multiplied by a constant depending on the masses, so $\pi$ maps the collinear configurations (all three masses lie on a single straight line) to the plane $\{ w_3 = 0 \}$. In astronomy, a collinear configuration is also called a \emph{syzygy}, see \cite{Mo02}, \cite{MMV12}. Depending on which mass is in the middle, we call it a type-$j$ syzygy. The plane $\{ w_3 = 0\} \subset \mcc$ will be called \emph{the syzygy plane}. 

\begin{figure}
	\centering
	\includegraphics[scale=0.6]{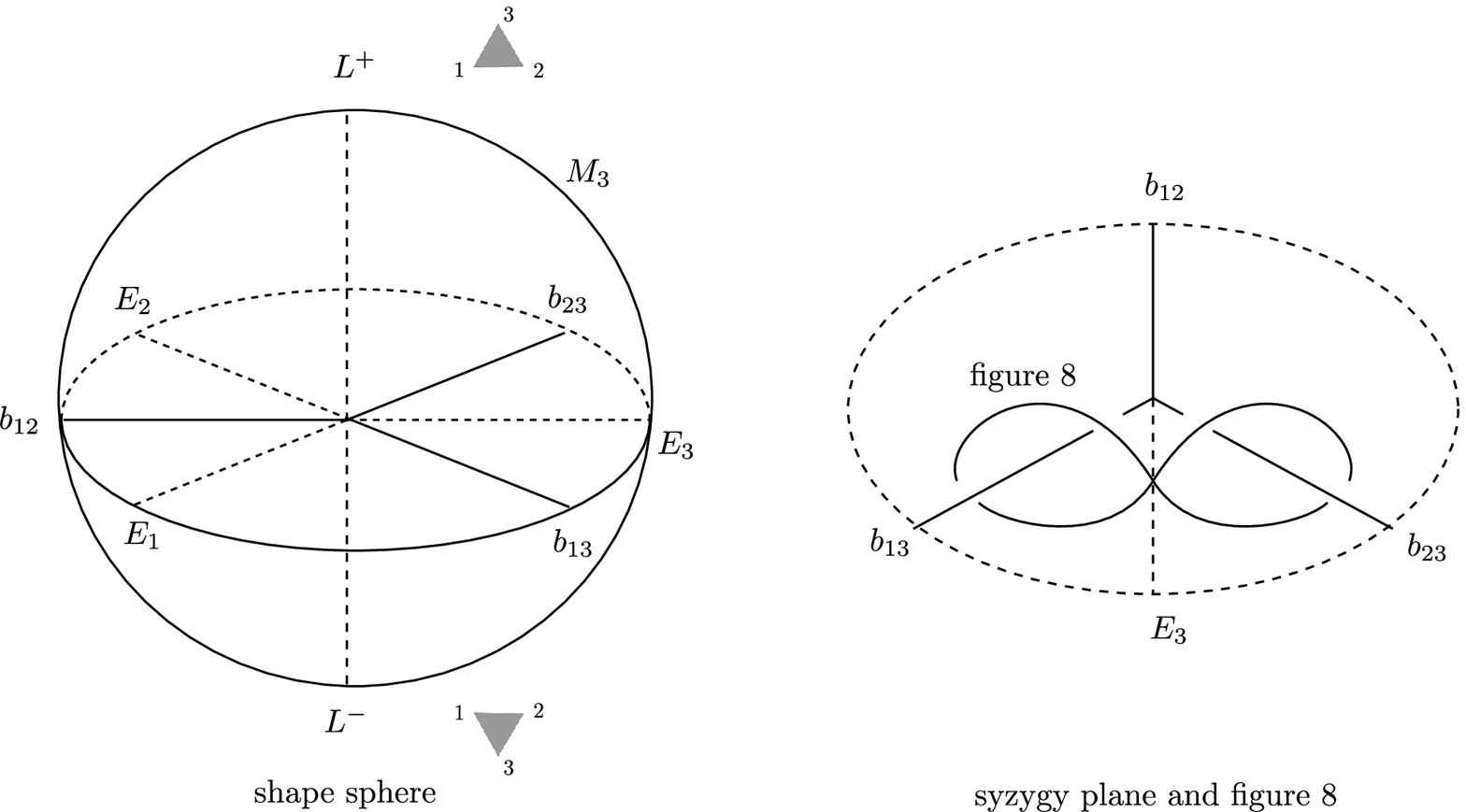}
	\caption{}
	\label{8-1}
\end{figure}

Let $I(x)= \sum_{j=1}^3 m_j |x_j|^2$ be the moment of inertia. The two dimensional sphere $ \cs = \{ \bw = \pi(x): I(x)= 1 \} \subset \mcc$ represents all oriented similarity classes of triangles and will be referred as \emph{the shape sphere}. 

Although $\mcc$ is homeomorphic to $\rr^3$, as a metric space it is not isometric to $\rr^3$, but to the cone $c(\cs)$. In general, the cone $c(M)$ over a Riemannian manifold $M$ with metric $ds^2$ is a metric space with one higher dimension. Topologically it is homeomorphic to $M \times [0, \pinf) / M \times \{0\}$ with $M \times \{ 0\}$ collapse to a single point. Given a $\bw \in \cs$ (or $W \subset \cs$), we use $c(\bw)$ (or $c(W)$) to represent the cone over $\bw$ (or $W$). 

There are five different normalized central configurations (modulo rotations): two of them are the equilateral Lagrangian configurations $L^{\pm}$ with different orientations; the other three are the collinear Euler configurations $E_j$, $j=1,2,3$, with $m_j$ in the middle. In the shape sphere, $L^{\pm}$ correspond to the north and south poles and $E_j$'s are located on the equator $\cs \cap \{ w_3 = 0 \}.$ See the left picture in Figure \ref{8-1}.

The three $E_j$'s divide the equator into three disjoint arcs. Each arc contains a binary collision $b_{jk}, 1 \le j<k \le3$ (the sub-index means the collision is between $m_j$ and $m_k$). $c(b_{jk})$'s represent the three binary collision rays emanating from the origin. After deleting them from the shape space, $ \mcc^* = \mcc \setminus \{ c(b_{jk}): 1 \le j< k \le 3 \}$ has nontrivial fundamental group. As $\mcc^*$ is homotopic to $\cs \setminus \{ b_{jk}: 1 \le j < k \le 3 \}$, its fundamental group is isomorphic to the projective colored braids group, see \cite{Mo96}. 

Given any two points in the shape space, they represent two oriented congruence classes of triangles in $\cx$. A minimizer of the action functional $A_L$ among all paths connecting those two oriented congruence classes must have zero angular momentum. As a result, its projection on the space can seen as an action minimizer of the action functional associated with the Lagrange $L_{\mcc}$: 
$$ L_{\mcc} = \kc + U, \;\; \kc = K - |J|^2/(2I),$$ 
where $U$, $K$, $J$ and $I$ are correspondingly the potential energy, the kinetic energy, the angular momentum and the moment of inertia. This establishes a connection between the variational problem in the original configuration space and the one in the shape space.  

Inspired by the basic theorem that every nontrivial free homotopy class of a compact Riemannian manifold can be realized by a minimal closed geodesic, Montgomery asked the following questions in \cite{Mo98} (similar questions were also asked by Wu-Yi Hsiang):

\begin{qu}
	\label{qu1} Can each free homotopy class of $\mcc^*$ be realized by a \textbf{zero angular momentum} reduced periodic solution of the Newtonian three body problem?
\end{qu}

\begin{qu}
	\label{qu2} The same question as above but without the condition of angular momentum being zero. 
\end{qu}

A nice way to represent the free homotopy classes is using the syzygy sequences. Given a free homotopy class of $\mcc^*$, a generic loop contained in this class has a discrete set of syzygies. Write down the syzygy types, namely $1,2$ or $3$, experienced by the loop following their temporal order in a single period, we got a corresponding \emph{syzygy sequence} of the loop. Such a sequence may have two or more of the same letters from $\{1,2,3\}$ appearing consecutively in a row. Such a phenomena will be called \emph{stutter}. After canceling all the stutters, the remaining sequence will be called the \emph{reduced syzygy sequence} of the loop. 

If two generic loops are from two different free homotopy classes, they have different reduced syzygy sequences; if they are from the same free homotopy class, then they have the same reduced syzygy sequence (one may need to reverse the time on one of the loop). As a result, there is a $2$-to-$1$ map from the periodic reduced syzygy sequences to the free homotopy classes of $\mcc^*$, except the empty sequence, which is the only pre-image of the trivial free homotopy class. 

Now Question \ref{qu1} and \ref{qu2} can be rephrased with \emph{free homotopy class} replaced by \emph{reduced syzygy sequence}. 

In a recent paper \cite{MM14} by Moeckel and Montgomery, Question \ref{qu2} was answered affirmatively using non-variational method. However their proof requires the angular momentum to be small but non-zero, which leaves Question \ref{qu1} still open. Meanwhile zero-angular momentum solutions can be found as action minimizations of the action functional define by $L_{\mcc}$. If we replace the Newtonian potential by a strong force potential, then Question \ref{qu1} was answered completely by Montgomery in \cite{Mo98} using variational approach, as in this case a minimizer can not have any collision. However when the potential is Newtonian or a weak force, for most free homotopy classes, an action minimizer is likely to contain collision, see \cite{Mo02a}.

Despite of this, for some free homotopy classes or reduced syzygy sequences, the variational approach may still work, if we can impose additional constraints on the paths. As it is well-known now, this can be done when some of the masses are equal. For example the Figure-$8$ solution of three equal masses is an action minimizer under certain symmetric constraints, and it is a zero angular momentum solution realize the periodic reduced syzygy sequence $123123$. 

Besides the above example, when $m_1 =m_2$, $2313$ may also be realizable by a zero angular momentum reduced periodic solution as suggested by Montgomery in \cite{Mo02a}. In the following, the readers will see the zero angular momentum periodic solution obtained in Theorem \ref{thm 1} does indeed realize this syzygy sequence. Obviously corresponding results hold for $3212$ (or $2131$), when $m_1 =m_3$ (or $m_2 = m_3$). 

Following the approach given in \cite{Mo99}, we first formulate the corresponding action minimization problem. Let $T_0=\tb/4$ for the rest of the paper.
\begin{dfi}
 \label{dfn:Om} We define $ \omh$ as the subspace of $H^1([0,T_0], \xh)$ consisting of all paths, starting at a type-$2$ syzygy with all the three masses lying on the real axis in the order: $x_1 < x_2 < x_3$ of arbitrary size and ending at the Euler configuration $E_3$ of arbitrary size and arbitrary angle. The weak closure of $\omh$ in $H^1([0,T_0], \cx)$ will be denoted by  $\Om$. 
\end{dfi}
After projecting to the shape space $\pi(\Om) =\{ \pi(x): x \in \Om \}$ contains all the paths starting at the closed sector in the syzygy plane between $c(b_{12})$ and $c(b_{23})$, and ending at $c(E_3)$.

\begin{prop} \label{prop}
	Assume $m_1=m_2$. When $1< \al <2$, the infimum of the action functional $A_L$ in $\Om$ is a minimum and any action minimizer $x \in \Om$ must be collision-free.
\end{prop}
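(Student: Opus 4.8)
The plan is to run the direct method of the calculus of variations and then exclude collisions by local deformation near each collision.

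First I would establish existence of a minimizer. The functional $A_L$ is weakly lower semicontinuous on $H^1([0,T_0],\cx)$: the kinetic term $\int_0^{T_0}K(\dx)\,dt$ is convex in $\dx$, and since $U\ge 0$ is lower semicontinuous on $\cx$ (continuous where finite, $+\infty$ on the collision set) while $H^1$-weak convergence implies uniform convergence, Fatou's lemma handles $\int_0^{T_0}U(x)\,dt$. For coercivity, a minimizing sequence $x_n$ has $\int_0^{T_0}K(\dx_n)\,dt$ bounded, so $\dx_n$ is bounded in $L^2$; and because $\pi(x_n)$ joins, in the shape sphere $\cs$, the closed type-$2$ arc bounded by $b_{23}$ and $b_{12}$ to the point $E_3$, two sets at positive distance, a Cauchy--Schwarz/Jacobi-length estimate bounds $\min_t I(x_n(t))$ from above, after which the bounded oscillation of $x_n$ bounds $\max_t I(x_n(t))$; hence $\{x_n\}$ is bounded in $H^1$. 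The same kind of estimate, using the $\al$-homogeneity of $U$, gives $0<\inf_\Om A_L<\infty$ (finiteness by exhibiting any smooth collision-free element of $\omh$). Passing to a weakly convergent subsequence and using that $\Om$ is weakly closed, the limit is a minimizer in $\Om$; by the transversality condition at $t=T_0$, where the angle is free, it has zero angular momentum.

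Next, let $x\in\Om$ be any minimizer; the goal is that it has no collision. On each collision-free subinterval $x$ is a real-analytic solution of \eqref{three body}, and standard results on the structure of the collision set of action minimizers reduce the analysis to finitely many isolated collisions, each either a binary collision or a (necessarily zero-angular-momentum) total collision, possibly at an endpoint. It then suffices to convert any such collision minimizer into a competitor in $\Om$ with strictly smaller action, contradicting minimality. Total collisions I would dispose of by the usual blow-up analysis: near a total collision at $t^*$ the Sundman--Sperling asymptotics give convergence of the normalized configuration to a central configuration and $I(x(t))\sim c|t-t^*|^{4/(2+\al)}$, so for $\al<2$ the piece has finite action but is not locally minimizing, and one replaces the in-and-out approach to the origin by a path bounded away from it with strictly smaller action; at an endpoint the deformed path can be taken to end in a small non-degenerate configuration still inside the relevant constraint set (a collinear ordered triple at $t=0$, a small $E_3$ at $t=T_0$, both admissible since the size is arbitrary).

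The decisive case is an isolated binary collision, where I would invoke the local deformation result of Section \ref{bicoll}. If $x$ has a binary collision between $m_j$ and $m_k$ at $t^*\in(0,T_0)$, then after blow-up the relative vector $x_j-x_k$ performs an ejection--collision homothety while the third mass moves in $C^1$ fashion, and the result of Section \ref{bicoll} produces a variation supported near $t^*$, compatible with the constraints defining $\Om$, along which $A_L$ strictly decreases. A one-sided version of the same construction handles a binary collision forced at $t=0$: the type-$2$ syzygy constraint there allows only $b_{12}$ or $b_{23}$, and the deformation can keep the endpoint a collinear triple in the order $x_1<x_2<x_3$ of (slightly larger) size, hence still in $\Om$; at $t=T_0$ the $E_3$-family contains no binary collision, so nothing more is needed. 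With all cases excluded, the minimizer stays in $\xh$, proving the proposition. I expect this last step — excluding binary collisions — to be the main obstacle: Gordon's estimate is not sharp enough for $1<\al<2$, so one genuinely needs an action-decreasing local deformation near the collision, and the real subtlety, which is exactly what Section \ref{bicoll} is designed to handle, is making that deformation respect the symmetric and boundary structure built into $\Om$, especially near the endpoint $t=0$.
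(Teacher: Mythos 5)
Your overall strategy --- direct method for existence, then excluding isolated collisions by action-decreasing local deformations compatible with $\Om$, with the binary collision at $t=0$ treated via Section \ref{bicoll} --- is the same as the paper's, and you correctly single out the decisive case. The genuine gap is in your treatment of \emph{triple} collisions. Saying the total-collision piece ``is not locally minimizing'' and can be replaced ``by a path bounded away from the origin with strictly smaller action'' is not an argument: merely avoiding the origin does not lower the action, and at the endpoints $t=0,T_0$ --- precisely where Marchal-type averaging fails --- the deformation must in addition land in the boundary sets defining $\Om$. What the paper actually does (Lemmas \ref{lm1}, \ref{lm2}) is: first reduce, using the reflection symmetries available since $m_1=m_2$, to a minimizer satisfying \eqref{iso}, which rules out $L^-$ as a limiting central configuration and constrains the orientations of the blow-up limits $s^{\pm}$; then exhibit an explicit deformation direction $\sg$ (a suitably oriented $E_3$ at $t=T_0$, an $E_2$ at $t=0$, essentially $s^{+}$ at interior times) that both lies in the admissible boundary set and satisfies the sign condition $\langle \sg_{jk}, s^{\pm}_{jk}\rangle \ge 0$ demanded by Propositions \ref{deform1} and \ref{deform3}; verifying this is the case analysis of Figures \ref{8-2} and \ref{8-3}. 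Proposition \ref{deform21} applies only to binary collisions, so none of this can be absorbed into the Section \ref{bicoll} result: as written, your endpoint total-collision case is unproved, and you check constraint-compatibility of the deformed endpoint but never the directional condition that makes the action decrease.

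Two smaller points. You never impose the normalization \eqref{iso}; for $1<\al<2$ your binary cases survive without it, since condition \eqref{eqn:binary} can always be arranged after choosing angle representatives modulo $2\pi$, but the triple-collision analysis genuinely needs it (to exclude $L^-$ and pin down orientations). Also, for an interior binary collision the paper uses the two-sided Proposition \ref{deform3} with the angle-bisector direction (again legitimized by \eqref{iso}), not Section \ref{bicoll}, whose deformation is one-sided; your interior case can be repaired either that way or by Marchal's averaging, which the paper notes is available on $(0,T_0)$.
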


\begin{prop}
 \label{prop 2} Assume $m_1=m_2$. When $\al =1$, the infimum of the action functional $A_L$ in $\Om$ is a minimum and any action minimizer $x \in \Om$ must satisfies one of the followings: 
 \begin{enumerate}
 \item $x$ is collision-free; 
 \item $x \in \Om \cap H^1([0,T_0], \rr^3)$ contains a single binary collision at $t=0$ $(x_1(0) < x_2(0) =x_3(0))$, and it is a quarter of a Schubart solution. 
 \end{enumerate}
\end{prop}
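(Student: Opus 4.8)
The plan is to run the direct method on $\Om$ exactly as in Proposition \ref{prop}, isolating the single new effect of the Newtonian exponent: a binary collision placed at the endpoint $t=0$ need not be removable by a local deformation, because for $\al=1$ the planar Kepler collision-ejection arc is itself an action minimizer (Gordon), and when such a collision occurs it forces the minimizer to be a quarter of a Schubart solution. Existence and regularity go exactly as for $\al\in(1,2)$: $A_L$ is weakly lower semicontinuous on $H^1([0,T_0],\cx)$ and $\Om$ is weakly closed, so it suffices to bound a minimizing sequence in $H^1$, which follows from the homogeneity of $L$ --- on the fixed interval $[0,T_0]$ a spatial rescaling by $\mu$ multiplies the kinetic part by $\mu^2$ and the potential part by $\mu^{-\al}$, while the shapes at $t=0$ (a type-$2$ collinear shape) and at $t=T_0$ (the shape $E_3$) are at a fixed positive distance on the shape sphere, so every path in $\Om$ displaces by an amount comparable to its size and the sizes stay bounded away from $0$ and $\infty$. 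A weak limit $x\in\Om$ is a minimizer; it solves \eqref{three body} on each collision-free subinterval, and the free endpoint data of Definition \ref{dfn:Om} (size at $t=0$; size and angle at $t=T_0$) appear as natural boundary conditions, forcing $x$ to have zero angular momentum, $\dot I(0)=\dot I(T_0)=0$, and the reflection symmetries of Theorem \ref{thm 1}(d).

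The next step is to localize a possible collision. Since $A_L(x)<\infty$, collisions are finite in number and isolated, with the colliding cluster asymptotic to a central configuration. Interior collisions, binary or total, are excluded by Marchal's lemma on interior collisions of minimizers: no pointwise constraint is imposed on $(0,T_0)$, so one may vary a single colliding body, and a minimizer has no collision involving a freely varied body. The endpoint $x(T_0)$ is the collision-free configuration $E_3$ (genuine, since the size is bounded below), so the embedding $H^1([0,T_0])\hookrightarrow C^0$ excludes a collision near $T_0$. A total collision at $t=0$ is excluded by a one-sided blow-up: when $x(0)=0$ the constraint of Definition \ref{dfn:Om} is vacuous, rotations of the ejection are admissible, and the homothetic central-configuration ejection arc is beaten by a rotated one. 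Hence any surviving collision is a binary collision at $t=0$, that is $x(0)\in c(b_{12})\cup c(b_{23})$.

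The boundary binary collision is where $\al=1$ enters. For $\al\in(1,2)$ the Kepler collision-ejection arc is not a minimizer, so the local deformation of Section \ref{bicoll} removes even this collision, yielding alternative (1); this is the content of Proposition \ref{prop}. For $\al=1$, Gordon's theorem makes that arc a minimizer, so deforming the colliding pair alone gains nothing; the deformation of Section \ref{bicoll} nevertheless removes the boundary collision \emph{unless} its blow-up profile is precisely the Schubart-type collinear ejection. Since with $m_1=m_2$ and $m_3$ arbitrary the only Schubart solution has collisions of types $b_{13}$ and $b_{23}$, and $c(b_{13})$ does not bound the type-$2$ sector in which $\Om$ begins, a $b_{12}$ collision is always removed, and the surviving possibility is a $b_{23}$ collision, $x_1(0)<x_2(0)=x_3(0)$, with that collinear blow-up profile. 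For such a minimizer I would then conclude, from the blow-up profile and the invariance (in regularized time) of the collinear three-body system, that the whole path is collinear, $x\in H^1([0,T_0],\rr^3)$; being a collinear path that begins with a $b_{23}$ collision, ends at $E_3$, and minimizes $A_L$ among all such paths, it is a quarter of a Schubart solution by Definition \ref{Schubart}. This is alternative (2).

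The main obstacle is the previous paragraph: designing the local deformation of Section \ref{bicoll} so that the only escape left at $\al=1$ is the collinear Schubart ejection, and then establishing the ensuing rigidity --- collinearity of the whole path, exclusion of $b_{12}$, and coincidence with a quarter of a Schubart solution --- from the binary collision asymptotics and the equal-mass symmetry. It is precisely Gordon's degeneracy at $\al=1$ that keeps this gap open and is responsible for condition \eqref{assume} in Theorem \ref{thm 1}: by alternative (2), the minimizer carries a collision only if the action of the quarter of a Schubart solution does not exceed $\inf_\Om A_L$.
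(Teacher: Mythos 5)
Your overall route is the same as the paper's: existence by the direct method, reduction of any surviving collision to a binary collision of the pair $\{2,3\}$ at $t=0$, and the observation that at $\al=1$ the binary-collision deformation (Proposition \ref{deform21}) fails only when the ejection angle of the pair is exactly opposite to the one admissible deformation direction (admissibility in $\Om$ forces the deformation to keep $x(0)$ on the real axis in the order $x_1\le x_2\le x_3$, so $\phi_3=0$ and the bad case is $\tht_3^+=\pi$). But there is a genuine gap at precisely the step you flag as ``the main obstacle'': you never prove the rigidity that makes alternative (2) true, namely that a minimizer with such a $b_{23}$ collision is collinear on all of $[0,T_0]$. Saying this follows ``from the blow-up profile and the invariance (in regularized time) of the collinear three-body system'' does not suffice: the blow-up only controls the leading asymptotics of the colliding pair, so a priori the non-colliding body could leave the real axis immediately ($\dot v_1(0)\neq 0$) and the pair could leave it at higher order, even though the ejection direction is real. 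The paper's Lemma \ref{lm: b23} is exactly this missing argument: vanishing of the total angular momentum combined with $\lim_{t\to 0^+}\jk(t)=0$ gives $\dot v_1(0)=0$; Sundman's estimates (Proposition \ref{sundman}), $\dot\tht_3\to 0$, and a Taylor expansion around $\tht_3^+=\pi$ give $\lim_{t\to 0^+}\dot v_2=\lim_{t\to 0^+}\dot v_3=0$ (equation \eqref{eqn:v}); only then does $x(t)\in\rr^3$ for all $t$ follow, after which Venturelli's Proposition \ref{Venturelli} identifies the path as a quarter of a Schubart solution. Without this, your argument only shows that a surviving collision is a $b_{23}$ collision at $t=0$ whose ejection direction is collinear, which is weaker than alternative (2) and not enough to make condition \eqref{assume} do its job in Theorem \ref{thm 1}.

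Two secondary points would also need repair. The exclusion of a triple collision at $t=0$ by ``rotating the ejection'' is not admissible: $\Om$ pins the configuration at $t=0$ to the real axis with the prescribed order, so rotated starting configurations leave $\Om$; the paper instead deforms along a real central configuration ($E_2$ at $t=0$, $E_3$ at $T_0$, Lemma \ref{lm2}), and note that the size at $T_0$ is not bounded below in the weak closure, so a triple collision at $T_0$ must likewise be excluded by deformation rather than by continuity of the path. Similarly, your reason for discarding a $b_{12}$ collision at $t=0$ (that the Schubart solution has no $b_{12}$ collision) is not an argument; the paper removes it for all $\al\in[1,2)$ by Proposition \ref{deform1}, using \eqref{iso} to pin the ejection angle $\tht_1^+\in[\pi/2,\pi]$ against the real admissible deformation direction (Lemma \ref{lm4}). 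These are fixable with the paper's deformation lemmas, but the collinearity rigidity in the first paragraph is the substantive missing piece.
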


\begin{dfi}
	\label{Schubart} When $m_1=m_2$, a collision solution $x^*: \rr / \bar{T} \zz \to \cx$ of \eqref{three body} is called a Schubart solution, if $x(t) \in \rr^3$, for any $t$, and the following conditions are satisfied:
	\begin{enumerate}
	\item $x^*$ satisfies condition (d) in Theorem \ref{thm 1}; 				
	\item $x^*_1(0) <0 < x^*_3(0) = x^*_2(0)$ and $\dot{x}^*(0) =0$;
	\item for $t \in (0, T_0)$, $x^*_3(t)$ is strictly decreasing, $x^*_j(t)$, $j=1,2$, is strictly increasing and $x_1(t) < x_3(t) < x_2(t)$;
	\item $x^*_3(T_0) =0$ and $x^*_2(T_0) = -x^*_1(T_0).$
	\end{enumerate}
\end{dfi}

This solution was discovered numerically by Schubart \cite{Sc56} in the case of Newtonian potential and the following was proved by Venturelli (\cite{Ve08})
\begin{prop}
\label{Venturelli} When $\al=1$ and $m_1=m_2$, the infimum of the action functional $A_L$ in $\Om \cap H^1([0,T_0], \rr^3)$ is a minimum and an action minimizer is a quarter of a Schubart solution. \end{prop}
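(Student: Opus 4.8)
The plan is to establish Proposition~\ref{Venturelli} by the direct method of the calculus of variations on the real collinear configuration space, with the bulk of the work going into the qualitative analysis of the minimizer. Write $\Om_{\rr}:=\Om\cap H^1([0,T_0],\rr^3)$. The starting point is a \emph{topological obstruction}: along any collision-free real collinear path the order of $x_1,x_2,x_3$ on the line is preserved, whereas a path in $\Om_{\rr}$ must begin ($t=0$) at a configuration with $x_1\le x_2\le x_3$ and end ($t=T_0$) at an Euler configuration $E_3$, i.e. with $x_3$ strictly between $x_1$ and $x_2$. Hence every finite-action element of $\Om_{\rr}$ carries at least one collision, the topologically forced one being an $m_2$--$m_3$ collision $b_{23}$ (interchanging the bodies in the two outer slots); such paths nevertheless have finite action because for $\al=1$ a binary-collision singularity is of order $|t-t^*|^{-2/3}$, which is integrable.

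The second step is existence. Since $U\ge0$, a bound on $A_L$ bounds $\int_0^{T_0}|\dot{x}|^2$; combining this with the two endpoint constraints (which force, e.g., $x_3$ to traverse a distance comparable to the size of the configuration) together with standard Sundman-type estimates rules out a minimizing sequence escaping to infinity or collapsing to total collision, so the sequence is bounded in $H^1$ and converges weakly, uniformly on $[0,T_0]$, to some $x^*\in\Om_{\rr}$ (the set is weakly closed — this is precisely why $\Om$ was defined as a weak closure, since the minimizer carries a collision and is not in $\omh$). Weak lower semicontinuity of $\int K$ (convexity in $\dot{x}$) together with Fatou's lemma for $U\ge0$ gives $A_L(x^*)\le\liminf A_L(x_n)$, so $x^*$ realizes the infimum; by now this is routine in the field.

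The heart of the argument is to show $x^*$ is a quarter of a Schubart solution in the sense of Definition~\ref{Schubart}. Away from its singular times — finitely many, by the Sundman--Sperling asymptotic analysis of isolated collisions — $x^*$ solves the Euler--Lagrange equation~\eqref{three body}; the natural (free) boundary conditions coming from the ``arbitrary size'' freedom at $t=0$ and the ``arbitrary size and angle'' freedom at $t=T_0$ give, respectively, $\dot{x}^*(0)=0$ whenever $x^*(0)$ is a non-collision configuration, and a critical point of the moment of inertia at $t=T_0$ (the angular momentum vanishing being already built in). One then argues that the unique collision of $x^*$ sits exactly at $t=0$ and is a single $b_{23}$ collision: a collision at $t=T_0$ is impossible since $E_3$ is collision-free, a total collision is excluded because it is not action-minimizing, and a $b_{12}$ collision or a collision-free brake configuration at $t=0$ would force an extra, topologically unnecessary collision in $(0,T_0)$ which can be removed at a strict gain by comparison with the competitor carrying $b_{23}$ at $t=0$. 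With the collision pinned at $t=0$, the asymptotic analysis of the binary ejection fixes the geometry there, and on $(0,T_0)$ a cut-and-reflect argument (replacing a non-monotone piece of $x_3^*$, or of $x_1^*,x_2^*$, by its reflection lowers the action and creates new collisions, contradicting minimality) yields the monotonicity and ordering of Definition~\ref{Schubart}(3). Finally the two discrete symmetries — reflection at $t=T_0$ through the $E_3$-fixing involution $(x_1,x_2,x_3)\mapsto(-x_2,-x_1,-x_3)$, which is a symmetry of the Lagrangian so the reflected path is again a solution matching $C^1$ at $t=T_0$, and the reflection at the binary collision $t=0$ furnished by Sundman's regularization — glue four copies of $x^*$ into the required $\bar{T}$-periodic Schubart solution $x^*\colon\rr/\bar{T}\zz\to\cx$.

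The step I expect to be the main obstacle is locating the topologically necessary collision precisely at $t=0$, as a single binary $b_{23}$ collision, i.e. ruling out interior binary collisions and total collision in the minimizer. This is genuinely global rather than local: a real local deformation cannot remove an interior $b_{23}$ collision without destroying the required order change, so instead one must compare the whole minimizer against a test path that already carries its collision at $t=0$ and prove the latter has strictly smaller action. This is exactly the kind of delicate asymptotic/blow-up analysis near an isolated binary collision underlying such existence proofs, and it is subtle here precisely because in the Newtonian collinear regime collisions cannot be excluded outright, only relocated and reduced in number.
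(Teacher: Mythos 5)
You should first be aware that the paper does not prove this proposition at all: it is imported verbatim from Venturelli \cite{Ve08} (see the sentence immediately preceding the statement), and the present paper only uses it as a black box in Proposition \ref{prop 2} and Theorem \ref{thm 1}. So there is no in-paper argument to compare with; what you have written is an attempt to reprove Venturelli's theorem. Your general outline (direct method on $\Om\cap H^1([0,T_0],\rr^3)$, the topological forcing of a $b_{23}$ collision by the change of ordering, transversality/brake conditions from the free endpoints, cut-and-reflect for monotonicity, and gluing by the two reflections to produce the periodic orbit) is the standard variational route and is in the spirit of Venturelli's proof, so the skeleton is reasonable.

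However, the step you yourself flag as the main obstacle is not actually argued, and it is precisely where the theorem lives. You assert that a total collision ``is excluded because it is not action-minimizing,'' and that a brake start or an interior collision ``can be removed at a strict gain by comparison with the competitor carrying $b_{23}$ at $t=0$,'' but no competitor is constructed and no strict inequality is proved. In this collinear Newtonian setting none of the paper's local machinery can substitute for such an argument: the deformations of Section \ref{sec local} and Section \ref{bicoll} (in particular Proposition \ref{deform21}) move the bodies off the line, so the deformed path leaves $H^1([0,T_0],\rr^3)$ and is not admissible for this restricted minimization; and by Gordon's theorem (cf.\ Remark \ref{rem: Gordon}) a collinear binary collision--reflection arc is itself a local minimizer when $\al=1$, so no local surgery within the line yields a strict gain. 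Consequently excluding interior binary collisions, excluding total collapse within collinear competitors (where the sign conditions of Proposition \ref{deform3} may be unsatisfiable if the incoming and outgoing Euler configurations have different orderings), and pinning the unique collision at $t=0$ all require genuinely global comparisons or finer asymptotic/level estimates, none of which appear in your sketch. As written, the proposal is an outline with the decisive step missing rather than a proof; to repair it you would either have to supply those global comparison arguments or, as the paper does, simply invoke \cite{Ve08}.
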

By this result, when $m_1=m_2$, if a minimizer $x \in \Om$ obtained in Proposition \ref{prop 2} is not collision-free, it must be a quarter of a Schubart solution. 

The proofs of Proposition \ref{prop} and \ref{prop 2} will be given in Section \ref{weak} and \ref{sec newton}. Right now we give a proof of Theorem \ref{thm 1} based on them. 
\begin{proof} 

 [\textbf{Theorem \ref{thm 1}}]
 First let's assume $1<\al<2$. By Proposition \ref{prop}, there is an $x \in C^2([0,T_0], \cx)$, which is a collision-free minimizer of $A_L$ in $\Om$. Therefore $x$ is a solution of \eqref{three body} with zero angular momentum. Due to the symmetry with respect to the syzygy plane, we can assume $\pi(x(t))$ always lies above or on the syzygy plane. 
 
 Notice that $\pi(x(t))$ only intersects the syzygy plane at $t=0$ and $ T_0.$ First it can not stay on the syzygy plane all the time, because then it must experience a collision. Now if it has an intersection with the syzygy plane at a time other than $0$ or $T_0$, then $\pi(x(t))$ must has a corner at some of those intersecting moments and it cannot be a minimizer. 
 
 We can continue the solution $x|_{[0,T_0]}$ to the time interval $[T_0, 2T_0]$ by twisting it through the Euler configuration $E_3$ and reversing the time: 
 $$ x(T_0+t) = H_3(x(T_0-t)), \quad \forall t \in [0, T_0],$$
 where the twist $H_3$ is the composition of reflection about the syzygy plane with reflection about the plane $c(M_3)$ ($M_3=\{\pi(x): |x_3-x_1| = |x_3-x_2|\} \cap \cs$ is a meridian on the space sphere, see the left picture in Figure \ref{8-1}). It is a symmetry of the action when $m_1 = m_2.$
 
 In the inertial plane, it is realized by the following equation
 $$ (x_1, x_2, x_3) (T_0 + t) = (-x_2, -x_1, -x_3)(T_0-t), \quad \forall t \in [0, T_0]. $$
 
 Now $x|_{[0, 2T_0]}$ is a path starting from a type-$2$ syzygy, ending at a type-$1$ syzygy and passing $E_3$ at the half time. By the minimizing and symmetric properties of $x$, it must hit the syzygy plane orthogonally on both ends. Hence we can continue the solution by reflecting it with respect to the syzygy plane and reversing the time. This gives us a closed loop in the shape of figure $8$ in the shape space as show in the right picture of Figure \ref{8-1}. Therefore it is also a reduced periodic solution with period $\tb =4T_0$, which realizes the syzygy sequence $2313.$
 
 To recover the motion in the inertial plane and show that it is in fact a periodic solution not just reduced periodic. We use the area rule obtained in \cite{Mo96}. Following the same argument used in \cite{CM00}, the angle between the straight lines passing all the three bodies at $t=0$ and $t=2T_0$, must be zero. Since we assume all masses start from the real axis, they all come back to it at the time $2T_0$. As a result the reflection with respect to the syzygy plane we performed on $x|_{[0,2T_0]}$ is just a reflection respect to the real axis and is realized by the following equation in the inertial plane
 $$ (x_1, x_2, x_3)(2T_0 +t) = (\bar{x}_1, \bar{x}_2, \bar{x}_3) (2T_0-t), \quad \forall t \in [0, 2T_0].$$
 This implies $x(t)$ is a periodic solution in the inertial plane with all the required properties.

 When $\al=1$, By Proposition \ref{prop 2}, there is an $x \in \Om$, which is a minimizer of the action functional $A_L$ in $\Om$. If condition \eqref{assume} holds, then by Proposition \ref{prop 2} and \ref{Venturelli}, $x$ must be collision-free. The rest follows from the same argument given as above. 
\end{proof}

\section{Local Deformation} \label{sec local}
We prove several local deformation results in this section that will be needed. Results given in this section hold for the $N$-body problem in $\rr^d$, for any $N, d \ge 2$, 
\begin{equation}
\label{N body} m_j \ddot{x}_j = \frac{\partial}{\partial x_j} U(x), \quad j =1, \dots, N,
\end{equation}
with the corresponding Lagrange $L(x, \dx) = K(\dx) + U(x)$,
$$K(\dx) = \ey \sum_{j=1}^N m_j |\dx_j|^2, \;\; U(x) = \sum_{1 \le j < k \le N} \frac{m_j m_k}{\al |x_j - x_k|^{\al}}.$$

Start with an assumption that an action minimizer contains an isolated collision, there are two different approaches of \textbf{local deformation}: one of them gets a contradiction by showing the average action value of locally deformed paths along a large enough set of directions is smaller than the action value of the collision minimizer (this was first introduce by Marchal \cite{Ce02}, then generalized by Ferrario and Terracini \cite{FT04}); the other gets a contradiction by showing directly the action value of some locally deformed path along some properly chosen direction is smaller than the action value of the collision minimizer (to our knowledge, this method first appeared in \cite{Mo99}, and then \cite{Ve02}). 

In general the first approach is more powerful. In particular when there is only symmetric constraints, it only requires the \emph{rotating circle property} to be satisfied, see \cite{FT04}. Although it does not work when topological constraints are also imposed. This makes it not applicable to the shape space Figure-8 solution, as the corresponding symmetric constraints do not satisfy the rotating circle property and topological constraints are involved in some sense (although it makes boundary constraints after symmetries). While the second approach is not as powerful as the first in general, it can be applied to problem with certain symmetric and topological constraints

In the following, we prove some local deformation results based on the second approach. The results essentially the same as those given in \cite{Mo99} and \cite{Ve02}. We include them here for two reasons: first in the previous references the results are only for the Newtonian potential; second this gives us the opportunity to introduce some notations and technical results that will be needed in the next section.

Let $\bn:=\{1, \dots, N \}$, for any subset of indices $\bk \subset \bn$, the sub-system of $\bk$-body problem has its Lagrange $ L_{\bk}(x, \dot{x})= K_{\bk}(\dot{x}) + U_{\bk}(x),$ where
$$ K_{\bk}(\dot{x}) = \ey \sum_{j \in \bk} m_j |\dot{x}_j|^2, \quad U_{\bk}(x) = \sum_{ j, k \in \bk; j<k} \frac{m_j m_k}{\al |x_j -x_k|^{\al}}. $$

Any $y = (y_j)_{j \in \bk}$ with $y_j \in \rr^d$ will be called a \emph{$\bk$-configuration}, and a \emph{centered $\bk$-configuration}, if $\sum_{j \in \bk} m_j y_j =0.$ The set of all centered $\bk$-configurations will denoted by 
$$ \cx^{\bk} := \{ y = ( y_j)_{j \in \bk}: \sum_{j \in \bk} m_j y_j =0 \}. $$
In this section, by a configuration we will only mean an $\bn$-configuration. Given a configuration $x$ and a $\bk$-configuration $y$, $z= x+y$ is defined as
$$ z_j = x_j + y_j, \text{ if } j \in \bk; \quad z_j = x_j, \text{ if } j \in \bn \setminus \bk. $$

\begin{dfi}
 \label{cluster collision} Given an $x \in H^1([0, \dl], \cx)$ with $\dl >0$, we say $x(0)$ is an \textbf{isolated $\bk$-cluster collision}, if $x(t), t \in (0, \dl]$ is collision-free and $x(0)$ has a $\bk$-cluster collision, i.e.,
$$ x_k(0) = x_0(0), \; \forall k \in \bk; \quad x_j(0) \ne x_0(0), \; \forall j \in \bn \setminus \bk,$$
where $x_0(t)$ is the center of mass of the $\bk$-body,
\begin{equation}
\label{x0} x_0(t) = \frac{\sum_{k \in \bk} m_k x_k(t)}{m_0}, \quad m_0 := \sum_{k \in \bk} m_k.
\end{equation} 

Such an $x$ will be called an \textbf{isolated $\bk$-cluster collision solution}, if $x(t), t \in (0, \dl],$ satisfies equation \eqref{N body}.
\end{dfi}

Fix an isolated $\bk$-cluster collision solution $x \in H^1([0, \dl], \cx)$ for the rest of this section. Let $\ik$ be the moment of inertia of the $\bk$-body with respect to their center of mass $x_0$, and $q$ the relative positions of $m_k$, $k \in \ik$, with respect to $x_0$,
\begin{equation} \label{ik}
 \ik = \ik (x) =\sum_{k \in \bk} m_k |x_k - x_0|^2,
\end{equation}
\begin{equation} \label{q}
 q = (q_k)_{k \in \bk}= (x_k - x_0)_{i \in \bk}.
\end{equation}

While $\ik$ represents the size of the $\bk$ cluster, its shape can be described by the normalized centered $\bk$-configuration 
\begin{equation} \label{s}
 s= (s_k)_{k \in \bk} = \big(\ik^{-\ey} (x_k - x_0) \big)_{k \in \bk} = \big(\ik^{-\ey} q_k \big)_{k \in \bk}.
\end{equation}
Obviously $\ik(s) =1.$

Choose a sequence of times $\{t_n >0\}$, such that $\lim_{n \to \infty} t_n=0$ and $\{ s(t_n) \}$ converges to a normalized centered $\bk$-configuration $\sbb=(\sbb_k)_{k \in \bk} $ (such a sequence always exists as the set of normalized centered $\bk$-configurations is compact), then the following is a well-known result (for a proof see \cite{FT04}).

\begin{prop} \label{cc}
	 $  \sbb = \lim_{n \to \pinf} s(t_n)$ is a central configuration of the $\bk$-body problem. 
\end{prop}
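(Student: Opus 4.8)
The statement to prove is Proposition \ref{cc}: that the limit $\bar{s} = \lim_{n\to\infty} s(t_n)$ of rescaled shapes along a time sequence approaching an isolated $\bk$-cluster collision is a central configuration of the $\bk$-body subproblem. This is the classical Sundman–Sperling asymptotic result, and the standard route goes through a careful analysis of the blow-up variables near collision. Let me sketch the proof plan.

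\textbf{Plan.}

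The plan is to derive the equations of motion for the cluster shape $s$ and the cluster size $\ik$ separately, and to show that as $t\to 0^+$ all the ``external'' forces (from bodies outside $\bk$) and all the time-derivative terms become negligible compared to the internal forces within the cluster, so that in the limit the configuration $\bar s$ satisfies the central-configuration equation $\nabla U_{\bk}(\bar s) + \lambda\, \bar s = 0$ for the appropriate multiplier $\lambda$.

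First I would set up the Sundman-type estimates. Since $x(0)$ is an isolated $\bk$-cluster collision, there is a $\dl_0 \in (0,\dl]$ so that on $(0,\dl_0]$ the bodies $m_j$, $j \in \bn \setminus \bk$, stay a definite distance away from the cluster center $x_0$, while $U_{\bk}(x(t)) \to \pinf$. From the energy relation and the fact that $x \in H^1$, one gets that $\ik(t) \to 0$ and, by the Lagrange–Jacobi identity applied to the cluster (treating the external forces as a bounded perturbation), that $\ik(t) \sim C t^{2/(2+\al)}$ up to lower-order terms; more precisely $\ik$ is comparable to $t^{2/(2+\al)}$ and $\dot\ik$, $\ddot\ik$ have the expected orders. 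I would also record that the internal potential satisfies $U_{\bk}(x(t)) \sim c\, \ik(t)^{-\al/2}$. These are the standard asymptotic estimates near an isolated collision; the external contributions to the potential and its gradient restricted to the $\bk$-bodies are uniformly bounded on $(0,\dl_0]$, hence of strictly lower order than the internal terms.

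Next I would write the equation satisfied by $q = (q_k)_{k\in\bk}$. Since $\ddot x_0$ is bounded (the external forces on the whole cluster are bounded and $m_0$ is fixed), $\ddot q_k = \ddot x_k - \ddot x_0 = m_k^{-1}\partial_{x_k}U(x) - \ddot x_0$, and the right side splits as $m_k^{-1}\partial_{q_k}U_{\bk}(q) + (\text{bounded external terms})$. Substituting $q_k = \ik^{1/2} s_k$ and using homogeneity of $U_{\bk}$ (degree $-\al$), one obtains an ODE of the schematic form
\begin{equation*}
\ik^{1/2}\ddot s + \dot\ik \,\ik^{-1/2}\dot s + \tfrac12(\ddot\ik\, \ik^{-1/2} - \tfrac12 \dot\ik^2 \ik^{-3/2}) s = \ik^{-1-\al/2}\,\nabla U_{\bk}(s) + (\text{bounded}).
\end{equation*}
Multiplying through by $\ik^{1+\al/2}$ and using the size asymptotics, every term on the left except the one proportional to $s$ is $o(1)$ as $t\to 0^+$ after the rescaling, and the ``bounded external'' term, multiplied by $\ik^{1+\al/2} \to 0$, also vanishes. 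Evaluating along $t = t_n$ and passing to the limit $s(t_n) \to \bar s$, and using that $\ddot\ik\,\ik^{1+\al/2}$ tends to a definite constant $-\lambda_0$ (this is exactly the Lagrange–Jacobi / virial relation for the cluster in the limit), I would conclude $\nabla U_{\bk}(\bar s) = \lambda_0 \bar s$, i.e., $\bar s$ is a central configuration. One subtlety is that the second-derivative terms $\ddot s$ need not converge, so the cleanest implementation is to avoid differentiating $s$ twice: instead test the equation for $q$ against a fixed variation and integrate, or use the known Sundman estimate $|\dot s|^2 \ik \to 0$ plus a Tauberian/averaging argument over a shrinking time window to kill the $\dot s$ and $\ddot s$ contributions in an averaged sense, which suffices to identify the limit shape.

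\textbf{Main obstacle.} The delicate point is controlling the derivative terms in $s$: a priori $s(t)$ can oscillate as $t \to 0^+$, so one cannot simply say $\dot s, \ddot s \to 0$ pointwise. The real content (and the part I would lean on \cite{FT04} for) is the Sundman–Sperling estimate that $\ik |\dot s|^2 \to 0$ and that $\ik \to 0$ monotonically with the precise power rate, which forces the ``angular'' part of the motion to die out faster than the ``radial'' part; combined with the boundedness of the external perturbation this pins the limit to the set of central configurations. Everything else — the homogeneity bookkeeping, the splitting of internal vs. external forces, the Lagrange–Jacobi identity for a sub-cluster — is routine, so I would state those estimates, cite \cite{FT04} for the asymptotics near an isolated collision, and give the limiting computation in detail.
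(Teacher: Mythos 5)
The paper does not actually prove Proposition \ref{cc}: it states it as a well-known fact and refers to \cite{FT04} for the proof, which is the classical Sundman--Sperling asymptotic analysis near an isolated cluster collision. Your plan reconstructs exactly that argument (internal forces dominate external ones, rescale $q=\ik^{1/2}s$, pass to the limit, and handle the non-convergence of $\dot s$, $\ddot s$ by an integrated/averaged argument in a rescaled time), and you defer the hard estimates to \cite{FT04}; so in substance you follow the same route as the paper's citation, only spelled out in more detail, and you correctly identify where the real difficulty lies.

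Two pieces of bookkeeping would have to be fixed before the limiting computation you describe actually closes. First, the Sundman rate is $\ik(t)\sim(\kp t)^{4/(2+\al)}$ (this is the paper's own Proposition \ref{sundman}); your $t^{2/(2+\al)}$ is the rate of the mutual distances, i.e.\ of $\ik^{1/2}$, not of $\ik$. Second, by $(-\al)$-homogeneity, $\nabla U_{\bk}(\ik^{1/2}s)=\ik^{-(\al+1)/2}\nabla U_{\bk}(s)$, so the equation for $q$ should be normalized by $\ik^{(\al+1)/2}$, not by $\ik^{1+\al/2}$; with the correct rates the quantities admitting finite nonzero limits are $\ddot{I}_{\bk}\,\ik^{\al/2}$ and $\dot{I}_{\bk}^{2}\,\ik^{(\al-2)/2}$, whereas $\ddot{I}_{\bk}\,\ik^{1+\al/2}\to 0$, so the multiplier $\lambda_0$ as you define it would vanish and the computation as literally written would output $\nabla U_{\bk}(\sbb)=0$, which is impossible. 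These are exponent slips rather than a wrong idea, and they are automatically repaired once the precise asymptotics are imported from \cite{FT04}; with them fixed, and with the averaged treatment of the $\dot s$, $\ddot s$ terms that you rightly flag as the essential content (and likewise outsource to \cite{FT04}, exactly as the paper outsources the whole proposition), your sketch is the standard proof.
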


Let $\sg =(\sg_k)_{k \in \bk}$ be a normalized centered $\bk$-configuration, i.e.,
\begin{equation}
\label{eqn:sg}\ik(\sg) = \sum_{k \in \bk} m_k |\sg_k|^2=1; \quad \sum_{k \in \bk} m_k \sg_k = 0.
\end{equation}
The following proposition is the first local deformation result we obtain.
\begin{prop}
	\label{deform1} Let $\sg_{jk} = \sg_j - \sg_k$ and $\sbb_{jk} = \sbb_j - \sbb_k,$ if the following condition holds
	\begin{equation}
	 \label{eqn:dfm dir} \langle \sg_{jk}, \sbb_{jk} \rangle \ge 0, \; \forall \{j \ne k\} \subset \bk
	\end{equation}
        then for $\dl_0, \ep_0>0$ small enough, there is a new path $y \in H^1([0, \dl], \cx)$ satisfying $A_L(y, \dl) < A_L(x, \dl),$ and 
        \begin{enumerate}
	 	\item[(a).] $y(t) = x(t),\; \forall t \in [\dl_0, \dl];$
	 	\item[(b).] $y(t)$ is collision-free, for any $t \in (0, \dl];$
	 	\item[(c).] $y|_{[0,\dl_0]}$ is a small deformation of $x|_{[0,\dl_0]}$ with $y(0) = x(0) + \ep_0 \sg,$ in particular $y_{k_1}(0) \ne y_{k_2}(0)$, for any $\{k_1 \ne k_2 \} \subset \bk$ and $y_k(0) \ne y_j(0)$, for any $k \in \bk$ and $j \notin \bk$.
	 	\end{enumerate}
	\end{prop}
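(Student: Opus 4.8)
The plan is to deform $x$ only on a short initial interval $[0,\dl_0]$, changing exclusively the internal configuration $q=(x_k-x_0)_{k\in\bk}$ of the colliding cluster while leaving the cluster center of mass $x_0$ and the outside bodies $x_j$, $j\notin\bk$, untouched. First I would record the consequences of the setup. Since $x(0)$ is an \emph{isolated} $\bk$-cluster collision, after shrinking $\dl$ the distances $|x_k-x_j|$ ($k\in\bk,\ j\notin\bk$) stay bounded below on $[0,\dl]$, so under any such deformation the coupling part of the potential, together with the kinetic energy of $x_0$ and of the outside bodies, changes by at most $O(\ep_0\dl_0)$; hence the change of $A_L$ equals, up to this error, the change of the internal action $\int_0^{\dl_0}\big(\ey\sum_{k\in\bk}m_k|\dq_k|^2+U_{\bk}(q)\big)\,dt$ (the cluster kinetic energy splits cleanly because $\sum_{k\in\bk}m_k\dq_k=0$). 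Next I would invoke the Sundman–Sperling asymptotics: $\ik(t)\asymp t^{4/(2+\al)}$, $|q_{jk}(t)|\asymp t^{\pw}$, $|\dq(t)|\asymp t^{\pw-1}$ as $t\to0^+$, whence the internal action over $[0,\dl_0]$ is $O\big(\dl_0^{(2-\al)/(2+\al)}\big)$ — this is where $\al<2$ is used — and, along the chosen sequence, $q(t_n)=\ik(t_n)^{\ey}(\sbb+o(1))$ with $\sbb$ the central configuration of Proposition~\ref{cc}. For definiteness I take $\dl_0=t_n$ for $n$ large, so the right endpoint shape is essentially $\sbb$ and $\dl_0$ is as small as needed. (We may assume $\sg_{jk}\neq0$ for all $\{j\neq k\}\subset\bk$, as is needed already for assertion (c) to make sense and as always holds in the applications.)

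\textbf{The test arc.} Fix a small $c''>0$ and set $\tau=c''\ep_0^{(2+\al)/2}$, so $|q(\tau)|\ll\ep_0$. Let $y$ agree with $x$ on $[\dl_0,\dl]$, and on $[0,\dl_0]$ keep $x_0$ and the outside bodies but replace $q$ by $\qb$ defined as: on $[0,\tau]$ the constant‑speed segment from $\ep_0\sg$ to $q(\tau)+\ep_0\sg$; on $[\tau,\dl_0]$ the path $q(t)+\ep_0\chi(t)\sg$ with $\chi$ smooth, $\chi(\tau)=1$, $\chi(\dl_0)=0$ (these match at $\tau$, and $\qb$ is again centered, so $y(t)\in\cx$). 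Then $y(0)=x(0)+\ep_0\sg$ because $x_k(0)=x_0(0)$ for $k\in\bk$; moreover $y_k(0)-y_j(0)=x_k(0)-x_j(0)+O(\ep_0)\neq0$ for $k\in\bk,\ j\notin\bk$, and $y_{k_1}(0)-y_{k_2}(0)=\ep_0\sg_{k_1k_2}\neq0$. For (b): on $[0,\tau]$, $|\qb_{jk}(t)|\geq\ep_0|\sg_{jk}|-|q_{jk}(\tau)|\geq\ey\ep_0|\sg_{jk}|$ once $c''$ is small; on the part of $[\tau,\dl_0]$ where $|q_{jk}(t)|\gg\ep_0$ the path $\qb$ is a small perturbation of the collision‑free arc $q$; and on the transition part, where $|q_{jk}(t)|\asymp\ep_0$, one expands $|q_{jk}+\ep_0\chi\sg_{jk}|^2=|q_{jk}|^2+2\ep_0\chi\langle q_{jk},\sg_{jk}\rangle+\ep_0^2\chi^2|\sg_{jk}|^2$ and uses $q_{jk}(t)\approx|q_{jk}(t)|\,\sbb_{jk}/|\sbb_{jk}|$ together with \eqref{eqn:dfm dir} to see the cross term is essentially nonnegative, so the expression is $\geq\ep_0^2\chi^2|\sg_{jk}|^2>0$. (Alternatively take $\qb$ to be the $A_{L_{\bk}}$‑minimizer between the fixed endpoints $\ep_0\sg$ and $q(\dl_0)$, which is collision‑free by the Marchal–Ferrario–Terracini theorem for $1\le\al<2$ and has action no larger than the explicit arc.)

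\textbf{The action estimate.} The point is $A_{L_{\bk}}(\qb,[0,\dl_0])<A_{L_{\bk}}(q,[0,\dl_0])$. On $[0,\tau]$ the decisive saving is kinetic: by Cauchy–Schwarz the straight segment carries kinetic energy exactly $\ik(\tau)/(2\tau)$, while the collision arc carries $\int_0^\tau\ey\|\dq\|^2\,dt$, which by the asymptotics exceeds $\ik(\tau)/(2\tau)$ by a definite amount of order $\tau^{(2-\al)/(2+\al)}$ with a constant depending only on the collision arc (the excess is forced by $\dq\to\infty$); meanwhile the potential of $\qb$ on $[0,\tau]$ is $\leq C_{\sg}\ep_0^{-\al}\tau$, and that of $q$ is discarded as nonnegative. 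Since $\tau^{(2-\al)/(2+\al)}=(c'')^{(2-\al)/(2+\al)}\ep_0^{(2-\al)/2}$ and $\ep_0^{-\al}\tau=c''\ep_0^{(2-\al)/2}$, and the exponent $(2-\al)/(2+\al)<1$, choosing $c''$ small makes the kinetic saving beat the potential cost, so the net gain on $[0,\tau]$ is $\geq c_1\ep_0^{(2-\al)/2}$ with $c_1>0$. On $[\tau,\dl_0]$ the perturbation $\ep_0\chi\sg$ adds at most $O\big(\ep_0\dl_0^{-\al/(2+\al)}+\ep_0^2/\dl_0\big)$ to the kinetic part, does not increase the potential to leading order — again by $\langle\sg_{jk},\sbb_{jk}\rangle\geq0$, which forces $|\qb_{jk}(t)|\gtrsim|q_{jk}(t)|$ — and, with the $O(\ep_0\dl_0)$ coupling error, all of this is $o\big(\ep_0^{(2-\al)/2}\big)$ as long as $\ep_0$ is small compared with $\dl_0^{2/(2+\al)}$. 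Hence $A_L(y,\dl)<A_L(x,\dl)$ for $\dl_0,\ep_0$ small (with $\ep_0\ll\dl_0^{2/(2+\al)}$), which is exactly (a)–(c).

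\textbf{Main obstacle.} The delicate part is this bookkeeping in the transition regime $|q_{jk}(t)|\asymp\ep_0$ and on $[\tau,\dl_0]$: several competing contributions — the potential of the collision arc, the potential of the deformed arc near the collision, and the kinetic saving — are all of the \emph{same} size $\ep_0^{(2-\al)/2}$, so one must keep track of constants (via the precise Sundman–Sperling normalization) and use \eqref{eqn:dfm dir}, which is equivalent to $\langle\sbb,\sg\rangle_m=\tfrac{2}{M}\sum_{j<k}m_jm_k\langle\sg_{jk},\sbb_{jk}\rangle\geq0$ in the mass inner product, to guarantee simultaneously that $\qb$ never re‑collides and that the perturbation nowhere decreases a mutual distance to leading order. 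A secondary technical point is that the alignment $q_{jk}(t)\approx|q_{jk}(t)|\,\sbb_{jk}$ underlying these sign statements is a priori known only along $\{t_n\}$; it is immediate when the limiting central configuration is isolated (e.g.\ for the planar three‑body problem), and in general one performs the whole construction on an interval $[0,t_n]$.
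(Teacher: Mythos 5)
Your overall scheme—deform only the internal configuration $q$ of the colliding cluster near $t=0$, pay a potential price of order $\ep_0^{-\al}\tau$ on a window of length $\tau\asymp\ep_0^{(2+\al)/2}$, and beat it with a gain of order $\ep_0^{(2-\al)/2}$—is the right heuristic, and your size estimates (Sundman asymptotics, the $O(\ep_0\dl_0)$ coupling error, the $O(\ep_0\dl_0^{-\al/(2+\al)}+\ep_0^2/\dl_0)$ kinetic cost of the cutoff) are fine. But there is a genuine gap exactly at the point you flag and then dismiss: every sign statement in your argument in the critical regime where $|q_{jk}(t)|\asymp\ep_0\chi(t)|\sg_{jk}|$ (collision-freeness of the deformed arc, and the claim $|q_{jk}+\ep_0\chi\sg_{jk}|\gtrsim|q_{jk}|$ so the potential does not increase) rests on the pointwise alignment $q_{jk}(t)\approx|q_{jk}(t)|\,\sbb_{jk}/|\sbb_{jk}|$. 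Proposition \ref{cc} only gives $s(t_n)\to\sbb$ along the chosen sequence; it says nothing about the direction of $q_{jk}(t)$ at the times $t\asymp\ep_0^{(2+\al)/2}$ where the cancellation $\ep_0\chi(t)\sg_{jk}\approx-q_{jk}(t)$ could occur, nor on all of $[\tau,\dl_0]$. Without that alignment the deformed path may literally pass through a collision, and the potential increase near the crossing scale is of the same order $\ep_0^{(2-\al)/2}$ as your kinetic gain (with a constant that degenerates as $c''\to0$), so the comparison collapses. Your two proposed remedies do not close this: isolatedness of the limiting central configuration fails in the planar setting (rotation orbits), and in general convergence of the shape $s(t)$ as $t\to0^+$ is not known; and "performing the construction on $[0,t_n]$" only pins the shape at the right endpoint $\dl_0=t_n$, while the dangerous window sits at times far below $t_n$ (and if you instead tie $\ep_0\asymp t_n^{2/(2+\al)}$, you need the shape to stay near $\sbb$ on a whole interval of length comparable to $t_n$, which is not a consequence of $s(t_n)\to\sbb$).

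This missing ingredient is precisely what the paper's proof supplies through the blow-up machinery of Ferrario--Terracini: the deformation is carried out on the homothetic--parabolic solution $\qb(t)=(\kp t)^{\pw}\sbb$ (Lemma \ref{deform2}), where the shape is exactly $\sbb$ for all $t$ so condition \eqref{eqn:dfm dir} yields the needed signs everywhere, and the strict action decrease is then transported back to $x$ via Propositions \ref{uniform converges} and \ref{psi phi} (uniform convergence of the rescaled paths $\qln$ to $\qb$ along $\lmd_n$, plus the correction terms $\Psi_n$), with the final rescaling $y_n(t)=\lmd_n^{\pw}\yln(t/\lmd_n)$. So to repair your argument you would have to import essentially this convergence result anyway, at which point you recover the paper's proof. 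Two smaller remarks: your mechanism of gain on $[0,\tau]$ (kinetic saving of the straight segment versus the collision arc, discarding the old potential) differs from the paper's (the old potential blow-up versus the bounded potential of the shifted arc, with kinetic cost $\le\ep/2$), and it does work at the level of the model problem since $2/(4-\al^2)>1/2$; and your standing assumption $\sg_{jk}\ne0$ for all pairs is harmless here but note the paper's Lemma \ref{deform2} only needs one nonvanishing $\sg_{jk}$.
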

To get the above result, first we will prove a similar result for the homothetic-parabolic solution $\qb(t)$ related to $\sbb$ (its energy is zero), which is defined as following
\begin{equation} \label{qb}
\qb(t) = (\kp t)^{\pw} \sbb, \text{ for any } t \in [0, \pinf),
\end{equation}
where $\kp$ is a constant only depending $\al$, $\sbb$ and the masses.

\begin{lm}
	\label{deform2} For any $T>0$, if condition \eqref{eqn:dfm dir} in Proposition \ref{deform1} holds, then for $\ep>0$ small enough, there is a collision-free $\qe \in H^1([0, T], \cx^{\bk})$ with $ A_{\lk}(\qe, T) < A_{\lk}(\qb, T)$, where $\qe(t) = \qb(t) + \ep f(t) \sg$, for any $t \in [0,T]$, and
	\begin{equation*}
	f(t) = \begin{cases}
	1, \quad & \text{ if } t \in [0, \ep^{\pwr}], \\
	1 + \frac{1}{\ep}(\ep^{\pwr}-t), \quad & \text{ if } t \in [\ep^{\pwr}, \ep^{\pwr}+ \ep], \\
	0, \quad & \text{ if } t \in [\ep^{\pwr} + \ep, T]. 
	\end{cases}
	\end{equation*}	
\end{lm}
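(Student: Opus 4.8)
The plan is to split $[0,T]$ at the two corners of $f$ into the \emph{plateau} $P=[0,\ep^{\pwr}]$, the \emph{ramp} $R=[\ep^{\pwr},\ep^{\pwr}+\ep]$ and the tail $[\ep^{\pwr}+\ep,T]$, on which $\qe\equiv\qb$. On $P$ one has $f\equiv1$, so $\qe=\qb+\ep\sg$ is a rigid translate, and by the angle hypothesis \eqref{eqn:dfm dir} this enlarges every interparticle distance and hence, since $\uk$ decreases in each $|x_j-x_k|$, strictly lowers the (integrable but $t=0$–singular) potential; on $R$ the deformation is switched off, which costs a little action. I would then show the gain on $P$ beats the cost on $R$ once $\ep$ is small. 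As preliminaries one records that $\qe\in H^1([0,T],\cx^{\bk})$ — $\qb\in H^1$ since $|\dot{\qb}(t)|^2$ is of order $t^{-2\al/(2+\al)}$, integrable because $\al<2$, and $\ep f\sg$ is Lipschitz — and that both actions $A_{\lk}(\qb,T)$, $A_{\lk}(\qe,T)$ are finite (on the tail everything stays away from collision), so that their difference is meaningful.

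The collision-free statement and a pointwise potential bound fall out together. For $j\ne k$ in $\bk$, $\qe_j-\qe_k=(\kp t)^{\pw}\sbb_{jk}+\ep f(t)\sg_{jk}$, hence
\[ |\qe_j-\qe_k|^2=(\kp t)^{2\pw}|\sbb_{jk}|^2+2\ep f(t)(\kp t)^{\pw}\langle\sbb_{jk},\sg_{jk}\rangle+\ep^2 f(t)^2|\sg_{jk}|^2\ \ge\ |\qb_j-\qb_k|^2, \]
using $f\ge0$ and \eqref{eqn:dfm dir}. Since $\sbb_{jk}\ne0$ for a central configuration the middle member is $>0$ for $t\in(0,T]$; at $t=0$ one has $\qe(0)=\ep\sg$, collision-free because $\sg$ is; and the inequality $|\qe_j-\qe_k|\ge|\qb_j-\qb_k|$ on all of $[0,T]$ yields $\uk(\qe(t))\le\uk(\qb(t))$ everywhere.

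For the quantitative part: on $P$, $\dot{\qe}=\dot{\qb}$, so the kinetic term is unchanged, and substituting $t=\ep^{\pwr}\tau$ and collecting powers of $\ep$ (using $\ep^{\pwr}\cdot\ep^{-\al}=\ep^{(2-\al)/2}$) gives the exact identity $A_{\lk}(\qb,P)-A_{\lk}(\qe,P)=C_1\,\ep^{(2-\al)/2}$, where
\[ C_1=\frac1\al\int_0^1\sum_{j<k}m_jm_k\Big(|\kp^{\pw}\tau^{\pw}\sbb_{jk}|^{-\al}-|\kp^{\pw}\tau^{\pw}\sbb_{jk}+\sg_{jk}|^{-\al}\Big)\,d\tau. \]
As above the integrand is $\ge0$, it is $>0$ for every $\tau$ whenever $\sg_{jk}\ne0$ (and such a pair exists, since a nonzero centered configuration cannot have all $\sg_{jk}=0$), and it is integrable near $\tau=0$ because $\al\pw=2\al/(2+\al)<1$; so $C_1$ is a finite positive constant independent of $\ep$. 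On $R$, $\dot{\qe}=\dot{\qb}-\sg$; the potential part of the action change is still $\le0$ by the bound above, and the kinetic part integrates to $-\big((\kp(\ep^{\pwr}+\ep))^{\pw}-\kp^{\pw}\ep\big)\sum_{j\in\bk}m_j\langle\sbb_j,\sg_j\rangle+\ep/2$, whose absolute value is $\le C_2\ep^{\pw}$ using $(\ep^{\pwr}+\ep)^{\pw}\le(2\ep)^{\pw}$ and $|\sum_j m_j\langle\sbb_j,\sg_j\rangle|\le\sqrt{\ik(\sbb)\ik(\sg)}=1$. Adding the three pieces,
\[ A_{\lk}(\qe,T)-A_{\lk}(\qb,T)\ \le\ -C_1\ep^{(2-\al)/2}+C_2\ep^{\pw}\ =\ \ep^{(2-\al)/2}\big(C_2\,\ep^{\al^2/(2(2+\al))}-C_1\big)<0 \]
for $\ep$ small, because $\pw-\tfrac{2-\al}{2}=\tfrac{\al^2}{2(2+\al)}>0$. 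This is the assertion.

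The step I expect to be the main obstacle is exactly this comparison of exponents. Because $\uk(\qb)$ is singular at $t=0$, the gain on $P$ is only of order $\ep^{(2-\al)/2}$, not order $1$; to overcome it one must make the ramp cost a strictly higher power of $\ep$, and the length $\ep^{\pwr}$ of $P$ is tuned precisely so that this happens — the decisive elementary fact is $\tfrac2{2+\al}>\tfrac{2-\al}2\iff\al^2>0$. This is also where $\al<2$ enters essentially: for $\al=2$ the rescaled integrand ceases to be integrable at $\tau=0$, i.e.\ $C_1$ diverges, and the scheme breaks down. A small bookkeeping point is that $\sg$ must be taken collision-free so that $\qe(0)=\ep\sg$ is genuinely collision-free; this is anyway implicit in the role of $\sg$ in Proposition \ref{deform1}.
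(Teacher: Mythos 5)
Your proposal is correct and follows essentially the same route as the paper: the same plateau/ramp splitting of $[0,T]$, the same use of condition \eqref{eqn:dfm dir} to show the potential only decreases, the same rescaling yielding a potential gain of order $\ep^{\frac{2-\al}{2}}$ on the plateau, and a kinetic cost on the ramp that is of higher order in $\ep$. The only cosmetic differences are your change of variables $t=\ep^{\pwr}\tau$ versus the paper's $\tau=t^{\pw}/\ep$, and that you bound the ramp's kinetic cross term by $C\ep^{\pw}$ in absolute value where the paper discards it using $\langle \sg,\sbb\rangle_m\ge 0$ and compares $\ep/2$ with $C_1\ep^{\frac{2-\al}{2}}$; both comparisons close the argument.
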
	

\begin{proof}
	By the definition of $\qe$ and $f$,
	\begin{align*}
	 &A_{\lk}(\qe, T) - A_{\lk}(\qb, T) = \int_0^T \lk(\qe, \dot{\qb}^{\ep}) - \lk(\qb, \dot{\qb}) \\
	   & = \int_0^{\ep^{\pwr}} \uk (\qe) - \uk(\qb)  + \int_{\ep^{\pwr}}^{\ep^{\pwr}+ \ep} \uk(\qe) - \uk(\qb)  + \int_{\ep^{\pwr}}^{\ep^{\pwr}+\ep} \kk(\dot{\qb}^{\ep}) - \kk(\dot{\qb})\\
	   & := A_1 + A_2 +A_3
	\end{align*}
	We will estimate each $A_j$ in the following. For any $\{j< k\} \subset \bk$, let 
	$$\qe_{jk} = \qe_j - \qe_k, \quad \qb_{jk} = \qb_j - \qb_k, \quad a_{jk}= \kp^{\pw}|\sbb_{jk}|, \quad c_{jk} = \kp^{\pw} \langle \sbb_{jk}, \sg_{jk} \rangle. $$
	Notice that $c_{jk} \ge 0$ by condition \eqref{eqn:dfm dir}. 
	
	Since $f(t) \ge 0, \forall t \in [0,T]$, the following holds for any $\{j <k\} \subset \bk$
	\begin{align*}
	 |\qb_{jk}(t) + \ep f(t) \sg_{jk}|^{\al} & = (|\qb_{jk}(t)|^2 + 2 \ep f(t) t^{\pw} c_{jk} + \ep^2 f^2(t) |\sg_{jk}|^2 )^{\frac{\al}{2}}  \ge |\qb_{jk}(t)|^{\al}. 
	\end{align*}
	This immediately tells us      
	\begin{equation} \label{a2}
	  A_2 = \sum_{j, k \in \bk; j<k} \frac{m_j m_k}{\al} \int_{\ep^{\pwr}}^{\ep^{\pwr} + \ep} |\qb_{jk}(t) + \ep f(t) \sg_{jk}|^{-\al} - |\qb_{jk}(t)|^{-\al} \, dt \le 0.
	\end{equation}
	
	For $A_1$, we notice that when $0 \le t \le \ep^{\pwr}$,
	\begin{align*}
	 \uk(\qe(t)) & - \uk(\qb(t)) = \sum_{j, k \in \bk; j<k} \frac{m_j m_k}{\al |\qe_{jk}(t)|^{\al}} - \frac{m_j m_k}{\al | \qb_{jk}(t)|^{\al}} \\
	 & = \sum_{j, k \in \bk; j<k} \frac{m_j m_k}{\al} ( |\qb_{jk}(t) + \ep \sg_{jk}|^{-\al} - |\qb_{jk}(t)|^{-\al} ) \\
	 & = \sum_{j, k \in \bk; j<k} \frac{m_j m_k}{\al} [ (a_{jk}^2 t^{\frac{4}{2 +\al}} + 2 \ep c_{jk} t^{\frac{2}{2 + \al}} + \ep^2 |\sg_{jk}|^2]^{-\frac{\al}{2}}- (a_{jk}t^{\frac{2}{2 + \al}})^{-\al} ].
	\end{align*}
	Therefore 
	$$ A_1= \sum_{j, k \in \bk; j<k} \frac{m_j m_k}{\al } \int_0^{\ep^{\pwr}} (a_{jk}^2 t^{\frac{4}{2 +\al}} + 2 \ep c_{jk} t^{\frac{2}{2 + \al}} + \ep^2 |\sg_{jk}|^2]^{-\frac{\al}{2}}- (a_{jk}t^{\frac{2}{2 + \al}})^{-\al} \, dt.$$
	After a time reparameterization $ \tau = t^{\pw}/ \ep$,	
	\begin{align}
	 \frac{2\al}{2+ \al} A_1&  =  \sum_{j, k \in \bk; j<k} m_j m_k \int_0^1 \frac{\ep^{\pwr} \tau^{\frac{\al}{2}}}{ (a_{jk}^2 \ep^2 \tau^2 + 2 c_{jk} \ep^2 \tau + \ep^2 |\sg_{jk}|^2 )^{\frac{\al}{2}}}- \frac{\ep^{\pwr} \tau^{\frac{\al}{2}}}{(a_{jk} \ep \tau)^{\al}}  \, d \tau \nonumber \\
	 & = \ep^{\frac{2-\al}{2}}\sum_{j, k \in \bk; j<k} m_j m_k \int_0^1 \frac{ \tau^{\frac{\al}{2}} }{(a_{jk}^2 \tau^2 + 2 c_{jk} \tau + |\sg_{jk}|^2)^{\frac{\al}{2}}} -\frac{\tau^{\frac{\al}{2}}}{(a_{jk}\tau)^{\al}}\, d\tau \nonumber \\
	 & \le \ep^{\frac{2-\al}{2}}  \sum_{j, k \in \bk; j<k} m_j m_k \int_0^1 [ (a_{jk}^2 \tau^2 + |\sg_{jk}|^2)^{-\frac{\al}{2}} - (a^2_{jk}\tau^2)^{-\frac{\al}{2}}] \tau^{\frac{\al}{2}} \, d\tau. \nonumber	  
	\end{align}
	The last inequality follows from $c_{jk} \ge 0$. Because $\sg$ is a normalized centered $\bk$-configuration, there is at least one $\sg_{jk}$ with $|\sg_{jk}| >0$. As a result, there is a constant $C_1 >0$, such that 
	\begin{equation}
	 \label{a1} A_1 \le -C_1 \ep^{\frac{2-\al}{2}}.
	\end{equation}
	In the rest of the paper, $C$ and $C_j$, $j \in \zz$, always represent positive constants. Meanwhile by a straight forward computation,
\begin{equation*} 
\begin{split}
 A_3 & = \int_{\ep^{\pwr}}^{\ep^{\pwr} + \ep} \kk(\dot{\qb}^{\ep}) - \kk(\dot{\qb}) \,dt = \sum_{k \in \bk} \frac{m_k}{2} \int_{\ep^{\pwr}}^{\ep^{\pwr}+\ep} |\dot{\qb}_k(t) - \sg_k|^2 - |\dot{\qb}_k(t)|^2 \, dt \\
 & = \sum_{k \in \bk} \frac{m_k}{2} \int_{\ep^{\pwr}}^{\ep^{\pwr}+\ep} | \sg_k|^2 - \frac{4}{2 +\al} \kp^{\pw} t^{-\frac{\al}{2 + \al}} \langle \sg_k, \sbb_k \rangle \, dt\\
 & = \frac{\ep}{2} \sum_{ k \in \bk} m_k |\sg_k|^2 -\frac{2}{2+\al} \kp^{\pw} \langle \sg, \sbb \rangle_m \int_{\ep^{\pwr}}^{\ep^{\pwr}+\ep} t^{-\frac{\al}{2+ \al}} \,dt. \\
 \end{split}
\end{equation*}
Notice that condition \eqref{eqn:dfm dir} implies $\langle \sg, \sbb \rangle_m \ge 0$. Then by $\sum_{k \in \bk} m_k |\sg_k|^2 =1$, we get 
       \begin{equation}
        A_3 \le \ep /2.
       \end{equation}
       
Following the above estimates
       $$ A_1 + A_2 + A_3 \le \ep/2 - C_1 \ep^{\frac{2-\al}{2}} <0,$$
       for $\ep>0$ small enough, as $0 < \frac{2-\al}{2} < 1$ for any $\al \in [1,2)$. This finishes our proof. 
       	
\end{proof}

To get a proof of Proposition \ref{deform1} using the above result, we need the \emph{blow-up} technique introduced by Terracini, see \cite{FT04}. 
\begin{dfi}
Given a $\lmd > 0,$ we say $\xl: [0, \dl/\lmd] \to \cx$ is a \textbf{$\lmd$-blow up} of $x$, where 
	$$ \xl(t) = \big(\xl_j(t)\big)_{j \in \bn} = \big( \lmd^{-\pw} x_j(\lmd t ) \big)_{j \in \bn}, \;\; \forall t \in [0, \dl/\lmd] $$
and $\ql: [0, \dl/\lmd] \to \cx^{\bk} $ is a \textbf{$\lmd$-blow up} of $q$, where
	$$ \ql(t) = \big(\ql_k(t)\big)_{j \in \bk} = \big( \lmd^{-\pw} q_k(\lmd t) \big)_{k \in \bn}, \;\; \forall t \in [0, \dl/\lmd].$$
\end{dfi}

Given a sequence $\{\lmd_n >0\}$ with $\lim_{n \to \infty} \lmd_n =0$, let $x^{\lmd_n}$ and $q^{\lmd_n}$ be the corresponding $\lmd_n$-blow up of $x$ and $q$. The following result was proved in \cite[(7.4)]{FT04}. 

\begin{prop} \label{uniform converges}
 If $\{ s(\lmd_n) \in \cx^{\bk} \}$ converges to $\sbb$, then for any $T>0$, the sequences $\{ \qln \}$ and $\{ \dql \}$ converge to the homothetic-parabolic solution $\qb$ and its derivative $\dot{\qb}$ correspondingly, uniformly on $[0,T]$ and on compact subsets of $(0,T]$ correspondingly. 
\end{prop}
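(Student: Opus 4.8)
The plan is to follow the blow-up scheme of Ferrario--Terracini \cite{FT04}: combine the asymptotic estimates for the isolated $\bk$-cluster collision with a compactness argument for the rescaled equations of motion. First I would write the equation for the relative positions $q=(q_k)_{k\in\bk}$: since $x$ solves \eqref{N body} and $x_0$ is the center of mass of the $\bk$-cluster, one has $m_k\ddot q_k=\partial_{q_k}\uk(q)+R_k$, where $R=(R_k)_{k\in\bk}$ collects the interaction of the cluster with the bodies $m_j$, $j\notin\bk$, together with the acceleration of $x_0$; since $x(0)$ is an \emph{isolated} cluster collision, the distances $|x_j(t)-x_0(t)|$ ($j\notin\bk$) are bounded below near $t=0$, so $R$ is bounded on $[0,\dl]$. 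Using that $\uk$ is $(-\al)$-homogeneous and that $\pw(\al+2)=2$, a short computation gives that the $\lmd_n$-blow up satisfies $m_k\ddot{\qln}_k=\partial_{q_k}\uk(\qln)+\lmd_n^{2-\pw}R_k(\lmd_nt)$ on $[0,\dl/\lmd_n]$; as $2-\pw>0$, the inhomogeneous term tends to $0$ uniformly on $[0,T]$, so the blow-ups are $\bk$-body motions up to a vanishing perturbation.

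Next I would invoke the Sundman--Sperling estimates near the collision (available from \cite{FT04}): there are constants $0<c<C$ with $c\,\tau^{2\pw}\le\ik(q(\tau))\le C\,\tau^{2\pw}$ and $|\dot q_k(\tau)|\le C\,\tau^{\pw-1}$ for $\tau\in(0,\dl]$, and the normalized shape $s(\tau)$ converges, as $\tau\to0^+$, to a single central configuration. Rescaling turns these into bounds $c\,t^{2\pw}\le\ik(\qln(t))\le C\,t^{2\pw}$ and $|\dql_k(t)|\le C\,t^{\pw-1}$, uniform in $n$ on every $[\dl_1,T]\subset(0,T]$. Since $\qln$ is collision-free on $(0,\dl/\lmd_n]\supseteq(0,T]$ for $n$ large, and the subsystem energies $E_{\bk}(\qln(t)):=\kk(\dql(t))-\uk(\qln(t))=\lmd_n^{\al\pw}E_{\bk}(q(\lmd_nt))$ tend to $0$ (because $E_{\bk}(q(\tau))$ has a finite limit as $\tau\to0^+$, being the integral of $\langle\dot q,R\rangle=O(\tau^{\pw-1})$), the uniform bound on $|\dql|$ keeps $\uk(\qln)$ uniformly bounded, hence the mutual distances stay uniformly bounded below on each $[\dl_1,T]$. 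Then $\{\qln\}$ is bounded in $C^1([\dl_1,T],\cx^{\bk})$ with values in a fixed collision-free compact set, so $\{\ddot{\qln}\}$ is bounded too; Ascoli--Arzel\`a plus a diagonal argument over $\dl_1=1/m$ extracts a subsequence converging in $C^1_{\mathrm{loc}}((0,T])$ to a solution $q_*$ of the unperturbed $\bk$-body equation on $(0,T]$ with $E_{\bk}(q_*)\equiv0$; the bound $\ik(\qln(t))\le C t^{2\pw}$ gives equicontinuity at $t=0$, so the convergence of $\qln$ is in fact uniform on $[0,T]$ with $q_*(0)=0$.

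Finally I would identify $q_*$ with $\qb$. From the scaling one reads off $s(\qln(t))=s(\lmd_nt)$ for every $t>0$ (using $\ik(\qln(t))=\lmd_n^{-2\pw}\ik(q(\lmd_nt))$); letting $n\to\infty$ and using that $s(\tau)$ tends to a single central configuration as $\tau\to0^+$, that configuration must be $\sbb=\lim_ns(\lmd_n)$, so $s(q_*(t))\equiv\sbb$. Hence $q_*$ is a zero-energy homothetic collision solution of shape $\sbb$ with total collapse at $t=0$, i.e.\ $q_*(t)=(\kp t)^{\pw}\sbb=\qb(t)$ by the defining normalization of $\kp$. Since this limit is independent of the subsequence, the whole sequences $\{\qln\}$ and $\{\dql\}$ converge to $\qb$ and $\dot\qb$, uniformly on $[0,T]$ and on compact subsets of $(0,T]$ respectively.

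I expect the real obstacle to be this last step, and more precisely the input imported from the collision analysis that the normalized shape converges to a \emph{single} central configuration: this is what guarantees that the blow-up ``sees'' exactly $\sbb$ rather than a different central configuration or a non-homothetic collision orbit. The rescaling in the first step and the compactness in the second are routine once the Sundman--Sperling estimates are granted; the delicate part is the asymptotic behaviour near the collision, which is exactly what \cite{FT04} establishes and which we simply quote.
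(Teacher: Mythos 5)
Your first two steps are fine: the rescaled equation with a perturbation of order $\lmd_n^{2-\pw}$, the rescaled Sundman bounds, and the Ascoli--Arzel\`a/diagonal extraction of a zero-energy limit solution $q_*$ with $q_*(0)=0$ are exactly the right skeleton (the paper itself gives no argument here, it simply cites \cite[(7.4)]{FT04}). The genuine gap is in your identification of $q_*$ with $\qb$. You use that ``$s(\tau)$ tends to a single central configuration as $\tau\to0^+$'' to conclude $s(q_*(t))=\lim_n s(\lmd_n t)=\sbb$ for every $t$. That full convergence is neither assumed nor available: the hypothesis of the proposition is only that $s(\lmd_n)\to\sbb$ along the given sequence, and what \cite{FT04} proves (and what Proposition \ref{cc} quotes) is only that \emph{subsequential} limits of the normalized configuration are central configurations. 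Whether $s(\tau)$ actually converges at a collision of three or more bodies is the classical infinite-spin problem, which was open at the time even for planar triple collisions --- precisely the cluster that matters in Lemmas \ref{lm1} and \ref{lm2}. If $s(\tau)$ were allowed to drift among central configurations, your argument controls the shape of $q_*$ only at $t=1$ (where $s(\qln(1))=s(\lmd_n)\to\sbb$ by hypothesis), not for all $t$, so the last paragraph does not close; it is also not accurate to say that \cite{FT04} ``establishes'' the single-limit statement you quote.

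The way to close it, in the spirit of \cite{FT04}, is to use the full strength of Sundman's estimates (Proposition \ref{sundman}) rather than two-sided bounds: since $\ik(\tau)\sim(\kp\tau)^{\frac{4}{2+\al}}$, the rescaling gives $\ik(\qln(t))\to(\kp t)^{\frac{4}{2+\al}}$ uniformly on $[0,T]$, so the limit satisfies $\ik(q_*(t))\equiv(\kp t)^{\frac{4}{2+\al}}$ exactly, with zero energy. The Lagrange--Jacobi identity $\ddot I_{\bk}=4\kk-2\al\,\uk$ together with $\kk=\uk$ then determines $\kk(\dot q_*(t))$, and a direct computation shows $\dot I_{\bk}^2=8\,\ik\,\kk$ along $q_*$, i.e.\ equality in the Cauchy--Schwarz (Sundman) inequality; equality forces $\dot q_{*,k}(t)=\mu(t)\,q_{*,k}(t)$ for all $k\in\bk$, so $q_*$ is homothetic with constant shape, $q_*(t)=(\kp t)^{\pw}s_0$ for some normalized central configuration $s_0$. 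Only now does the hypothesis enter: $s_0=s(q_*(1))=\lim_n s(\qln(1))=\lim_n s(\lmd_n)=\sbb$, hence $q_*=\qb$, and your subsequence-independence remark finishes the proof. With this replacement of the final step (and no appeal to full shape convergence at the collision), the proposal becomes a complete proof of the proposition as stated.
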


Let $\{\Psi_n \in H^1([0,T], \cx^{\bk})\}_{n=1}^{\infty}$ be a sequence of functions, defined as following 
\begin{equation}
 \label{psi} \Psi_n(t) = 
 \begin{cases}
  \qb(t) - \qln(t), & \text{ if } t \in [0, T-\frac{1}{S_n}];\\
  S_n(T-t)(\qb(t) - \qln(t)), & \text{ if } t \in [T- \frac{1}{S_n}, T],
 \end{cases}
\end{equation}
where the sequence $ \{S_n >0\}$ satisfying $\lim_{n \to \infty} S_n = +\infty$. Notice that $\Psi_n(0) =\Psi_n(T)=0$ and by Proposition \ref{uniform converges}, $\Psi_n$ converges uniformly to $0$ on $[0, T]$. Furthermore define a function $\Phi =(\Phi_j)_{j \in \bk}: [0,T] \to \cx^{\bk}$ as following
\begin{equation}
\label{phi} \Phi(t) = \qe(t) - \qb(t), \quad \forall t \in [0,T],
\end{equation}
where $\qe$ is obtained through Lemma \ref{deform2}. Since $\Phi(t)$ is $C^1$ in a neighborhood of $T$ and $\sum_{j \in \bk} m_j \Phi_j(t) = 0$ for any $t \in [0,T]$, the following was proved in \cite[(7.9)]{FT04}.

\begin{prop} \label{psi phi}
	For any $T \in (0, \dl),$ there is a sequence of integers $\{S_n \} \nearrow +\infty $, such that for $\Psi_n$ and $\Phi$ defined as above,  
    $$ \lim_{n \to \pinf} A_L (\xln +\Phi + \Psi_n, T) - A_L (\xln, T)= A_{L_{\bk}}(\qb + \Phi, T) - A_{L_{\bk}}(\qb, T). $$
\end{prop}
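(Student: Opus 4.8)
The plan is to localize the whole computation to the internal motion of the $\bk$-cluster, since the perturbation $\Phi+\Psi_n$ affects only the $\bk$-coordinates and, in the blow-up limit, couples only to the internal Lagrangian $\lk$. Writing $\eta:=\Phi+\Psi_n$ and $z:=\xln+\eta$ (so $z_j=\xln_j$ for $j\notin\bk$), the first step is an exact algebraic splitting of $A_L(z,T)-A_L(\xln,T)$. Using $\xln_j=x^{\lmd_n}_0+\qln_j$, where $x^{\lmd_n}_0(t)=\lmd_n^{-\pw}x_0(\lmd_n t)$ is the $\lmd_n$-blow up of the cluster center of mass, together with the identity $\sum_{j\in\bk}m_j\eta_j\equiv 0$ — which holds because $\sg$ is mass-centered and $\Psi_n$ is a \emph{scalar} multiple of $\qb-\qln$ with $\qb,\qln$ both mass-centered — the cross term $\langle\dot x^{\lmd_n}_0,\frac{d}{dt}\sum_{j\in\bk}m_j\eta_j\rangle$ in the expansion of the kinetic energy vanishes identically. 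One is left with
$$ A_L(z,T)-A_L(\xln,T)=\big( A_{\lk}(\qln+\eta,T)-A_{\lk}(\qln,T) \big)+\int_0^T R_n\,dt, $$
where $R_n$ collects only the interaction terms between $\bk$ and $\bn\setminus\bk$.

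Next I would dispose of $\int_0^T R_n\,dt$. Because $x(0)$ is an \emph{isolated} $\bk$-cluster collision, for $j\in\bk$ and $k\notin\bk$ the distance $|x_j(s)-x_k(s)|$ is bounded below by some $c_0>0$ on a neighbourhood $[0,s_0]$ of $0$, by continuity and compactness; since $\lmd_n T\le s_0$ for $n$ large, the blow-up then satisfies $|\xln_j-\xln_k|\ge c_0\lmd_n^{-\pw}$ uniformly on $[0,T]$, and $|\xln_j+\eta_j-\xln_k|\ge \tfrac12 c_0\lmd_n^{-\pw}$ once we note $\eta$ is uniformly bounded. Hence every term of $R_n$ is $O(\lmd_n^{\al\pw})$ uniformly in $t$, so $\int_0^T R_n\,dt=O(\lmd_n^{\al\pw})\to 0$.

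The core is then to show $A_{\lk}(\qln+\eta,T)-A_{\lk}(\qln,T)\to A_{\lk}(\qb+\Phi,T)-A_{\lk}(\qb,T)$. On $[0,T-1/S_n]$ one has $\Psi_n=\qb-\qln$, hence $\qln+\eta=\qb+\Phi=\qe$ there, so the above difference splits as $\big(A_{\lk}(\qe,[0,T-1/S_n])-A_{\lk}(\qln,[0,T-1/S_n])\big)$ plus a term supported on $[T-1/S_n,T]$ which I claim vanishes. For the first term, $\qe$ is collision-free on $[0,T]$ with at worst the $t^{\pw}$-type singularity of $\qb$ at $0$ (integrable precisely because $\al<2$), hence has finite action on $[0,T]$, so $A_{\lk}(\qe,[0,T-1/S_n])\to A_{\lk}(\qb+\Phi,T)$ as $S_n\to\infty$. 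For $A_{\lk}(\qln,[0,T-1/S_n])$, Proposition \ref{uniform converges} gives $\qln\to\qb$ in $C^0([0,T])$ and $\dot\qln\to\dot\qb$ uniformly on compact subsets of $(0,T]$, which yields $A_{\lk}(\qln,[\eta_0,T])\to A_{\lk}(\qb,[\eta_0,T])$ for every fixed $\eta_0>0$; the tail near $0$ is bounded uniformly in $n$ by the scaling identity $A_{\lk}(\qln,[0,\eta_0])=\lmd_n^{(\al-2)/(2+\al)}A_{\lk}(q,[0,\lmd_n\eta_0])=O\big(\eta_0^{(2-\al)/(2+\al)}\big)$, which uses the standard asymptotics $|\dot q_k(s)|=O(s^{\pw-1})$ and $U_{\bk}(q(s))=O(s^{-\al\pw})$ near an isolated collision; an $\varepsilon/3$ argument then gives $A_{\lk}(\qln,[0,T-1/S_n])\to A_{\lk}(\qb,T)$. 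Finally, on $[T-1/S_n,T]$ (where $\Phi\equiv 0$ for $n$ large) the path $\qln+\Psi_n=(1-S_n(T-t))\qln(t)+S_n(T-t)\qb(t)$ stays within $\|\qln-\qb\|_{C^0}$ of the collision-free curve $\qb$ and has velocity at most $\|\dot\qb\|_{C^0([T/2,T])}+o(1)+S_n\|\qln-\qb\|_{C^0}$; so choosing $S_n\nearrow+\infty$ slowly enough that $S_n\|\qln-\qb\|_{C^0([0,T])}\to 0$ (possible since $\|\qln-\qb\|_{C^0}\to 0$), the $\lk$-action of both $\qln+\Psi_n$ and $\qb$ over this length-$1/S_n$ interval is $O(1/S_n)\to 0$. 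Combining the three steps proves the statement.

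I expect the only genuine difficulty to be this last point: $\Psi_n$ must deform $\qln$ all the way to $\qb$, but since this repair is forced into an interval of width $1/S_n$, the correcting velocity carries a factor $S_n(\qb-\qln)$, and balancing ``$S_n\to\infty$'' against ``$S_n\|\qln-\qb\|_{C^0}\to 0$'' is exactly what pins down the admissible (slow) growth rate of $S_n$ — this is the content of \cite[(7.9)]{FT04}. Everything else is a mechanical consequence of the cluster being isolated together with $\Phi$ and $\Psi_n$ being mass-centered.
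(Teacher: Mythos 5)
Your argument is correct, and it is worth noting that the paper contains no proof of Proposition \ref{psi phi} at all: the statement is simply quoted from Ferrario--Terracini \cite[(7.9)]{FT04}. What you have written is essentially a self-contained reconstruction of that argument, with all the right ingredients in the right places: the exact cancellation of the center-of-mass cross term in the kinetic energy because $\Phi+\Psi_n$ is mass-centered (so the difference of full actions reduces \emph{identically} to a difference of cluster actions plus an interaction remainder); the bound $|x_j^{\lmd_n}-x_k^{\lmd_n}|\ge c_0\lmd_n^{-\pw}$ for $j\in\bk$, $k\notin\bk$, which is exactly where the isolation of the $\bk$-cluster collision enters and makes the remainder $O(\lmd_n^{2\al/(2+\al)})$; the Sundman-type estimates combined with the scaling identity $A_{\lk}(\qln,[0,\eta_0])=\lmd_n^{(\al-2)/(2+\al)}A_{\lk}(q,[0,\lmd_n\eta_0])=O(\eta_0^{(2-\al)/(2+\al)})$, which is the correct uniform-in-$n$ tail control needed to upgrade the $C^0$ and $C^0_{\rm loc}((0,T])$ convergence of Proposition \ref{uniform converges} to convergence of actions (this works precisely because $\al<2$); and the choice of $S_n\nearrow\infty$ slow enough that $S_n\|\qln-\qb\|_{C^0([0,T])}\to0$, which is what makes the width-$1/S_n$ boundary layer contribute $O(1/S_n)$ since $\qb$ is bounded away from collision near $T$ and $\Phi\equiv0$ there. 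One small remark: all you need of $\qe=\qb+\Phi$ is finite action on $[0,T]$ (so that truncation at $T-1/S_n$ converges), which follows since $U_{\bk}(\qe)\le U_{\bk}(\qb)$ and $|\dot{\qb}(t)|\sim Ct^{-\al/(2+\al)}$ is square-integrable for $\al<2$; no collision-freeness of $\qe$ is actually used at this step. So the proposal is a complete proof taking the same route as the cited source, and in that sense it supplies an argument the paper itself omits.
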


The above two results give us a connection between the isolated $\bk$-cluster collision solution $x$ and the homothetic-parabolic solution $\qb$. Now we can prove Proposition \ref{deform1}
\begin{proof} 
 
 [\textbf{Proposition \ref{deform1}}] Choose a $T \in (0, \dl)$, for each $\lmd_n$, let
 $$ \yln (t) = 
 \begin{cases}
  \xln(t) +\Phi(t) + \Psi_n(t), & \text{ if } t \in [0, T]; \\
  \xln(t), & \text{ if } t \in [T, \frac{\dl}{\lmd_n}]. 
 \end{cases}
$$
They are well-defined as $\Phi(T) = \Psi_n(T)=0$. By Proposition \ref{psi phi} and Lemma \ref{deform2},
\begin{equation*} 
\begin{split}
\lim_{n \to \pinf} A_{L}(\yln, \dl/\lmd_n) &- A_{L}(\xln, \dl/\lmd_n)  = \lim_{n \to \pinf} A_{L}(\yln, T) - A_{L}(\xln, T)   \\
 & = A_{\lk}(\qb +\Phi, T) - A_{\lk}(\qb, T) =A_{\bk}(\qe, T)-A_{\bk}(\qb,T) < 0. 
\end{split}
\end{equation*}
Hence for $n$ large enough,
\begin{equation}
 \label{lmdn}  A_{L}(\yln, \dl/\lmd_n) < A_{L}(\xln, \dl/\lmd_n).
\end{equation}
For each $n$, we define a new path: $ y_n(t) = \lmd_n^{\pw} \yln(t/ \lmd_n),$ for any $t \in [0, \dl]$. Then
$$ y_n(t) - x(t) = 
\begin{cases}
\lmd_n^{\pw}[\Phi(t/\lmd_n)+\Psi(t/\lmd_n)], & \text{ if } t \in [0, \lmd_n T]; \\
0, & \text{ if } t \in [\lmd_n T, \dl].
\end{cases}
 $$
This shows, for $n$ large enough, $y_n$ is just a small deformation of $x$ in a small neighborhood of $0$ and 
$$ y_n(0) = x(0) + \lmd^{\pw} \Phi(0) = x(0) + \lmd_n^{\pw} \ep \sg, $$
so $y_n(t)$ is collision-free, for any $t \in (0, \dl]$ and $y(0)$ satisfies statement (c) in Proposition \ref{deform1}. At the same time, a straight forward computation shows
$$ A_L(y_n, \dl) - A_{L}(x, \dl) = \lmd_n^{\frac{2-\al}{2+\al}}[A_L(\yln, \dl /\lmd_n) - A_L(\xln, \dl/\lmd_n)] <0.$$
\end{proof}

Although we only talked about isolated collision happening at $t=0$ so far. By simply reversing time, we can get similar results when an isolated collision occurs at $t =T_0$. At the same, an isolated collision can also occur at $t \in (0,T_0)$. To deal with this, we extend the definition of \textbf{isolated $\bk$-cluster collision solution} given in Definition \ref{cluster collision} to $x(t), t \in [-\dl, \dl]$, by adding an extra condition that the energy being conserved during the isolated collision $x(0)$: 
$$K(\dot{x}(t)) -U(x(t)) \equiv \text{Constant}, \quad  \forall t \in [-\dl, \dl] \setminus \{0\}.$$

Let $s(t)$ be the normalized centered $\bk$-configuration defined as in \eqref{s}. If two sequences $\{t^+_n >0 \}$ and  $\{ t^-_n < 0 \}$ satisfies $\lim_{n \to \infty} t_n^{\pm}=0$ and $\lim_{n \to \pinf}s(t_n^{\pm}) = s^{\pm}$, by Proposition \ref{cc}, $s^{\pm}$ are two central configurations of the $\bk$-body problem. 
Let $\sg =(\sg_k)_{k \in \bk}$ be a normalized centered $\bk$-configuration satisfying \eqref{eqn:sg}. 
\begin{prop} \label{deform3}
 If $\langle \sg_{jk}, s^{\pm}_{jk} \rangle \ge 0$, for any $\{j < k \} \in \bk$, where $\sg_{jk}= \sg_j -\sg_k$ and $s^{\pm}_{jk} = s^{\pm}_j - s^{\pm}_k,$ 
then for $\dl_0, \ep_0>0$ small enough, there is a new path $y \in H^1([-\dl, \dl], \cx)$ satisfying $A_L(y, [-\dl, \dl]) < A_L(x, [-\dl, \dl]),$ and 
        \begin{enumerate}
	 	\item[(a).] $y(t) = x(t), \; \forall t \in[-\dl, -\dl_0]\cup[\dl_0, \dl];$
	 	\item[(b).] $y(t)$ is collision-free, for any $t \in [-\dl, \dl] \setminus \{0\};$
	 	\item[(c).] $y|_{[\dl_0,\dl_0]}$ is a small deformation of $x|_{[\dl_0,\dl_0]}$ with $y(0) = x(0) + \ep_0 \sg$, in particular $y_{k_1}(0) \ne y_{k_2}(0)$, for any $\{k_1 \ne k_2 \} \subset \bk$ and $y_k(0) \ne y_j(0)$, for any $k \in \bk$ and $j \notin \bk$.
	 	\end{enumerate}
\end{prop}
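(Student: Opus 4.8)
\smallskip
\noindent\emph{Proof plan.}\quad The plan is to obtain $y$ by gluing together two separate applications of the one-sided result, Proposition \ref{deform1}: one on $[0,\dl]$ and one on $[-\dl,0]$, the latter run backwards in time using the time-reversed form of Proposition \ref{deform1} mentioned above, and both performed with the \emph{same} normalized centered $\bk$-configuration $\sg$. By the extended definition, the restriction $x|_{[0,\dl]}$ is an isolated $\bk$-cluster collision solution in the sense of Definition \ref{cluster collision}, and so is $x|_{[-\dl,0]}$ read backwards in time; moreover the central configurations supplied by the sequences $\{t_n^+\}$ and $\{-t_n^-\}$ through Proposition \ref{cc} are exactly $s^+$ and $s^-$. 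Hence the two sign conditions $\langle\sg_{jk},s^+_{jk}\rangle\ge 0$ and $\langle\sg_{jk},s^-_{jk}\rangle\ge 0$ for all $\{j<k\}\subset\bk$ are nothing but condition \eqref{eqn:dfm dir} of Proposition \ref{deform1} for the right-hand side and for the time-reversed left-hand side respectively, with the \emph{same} test direction $\sg$, so Proposition \ref{deform1} applies on each side.

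The one thing to arrange is that the two one-sided deformations agree at $t=0$. As the proof of Proposition \ref{deform1} shows, for every sufficiently large blow-up index the deformed path has $y(0)=x(0)+\ep_0\sg$ with $\ep_0=\lmd_n^{\pw}\ep$, where $\ep>0$ is the (arbitrarily small) parameter of Lemma \ref{deform2}; letting $\ep$ range over a fixed small interval and keeping $n$ fixed and large, $\ep_0$ ranges over a whole interval $(0,\dl_1)$ of small positive numbers, so that construction realizes \emph{every} sufficiently small displacement. Therefore I would first apply Proposition \ref{deform1} on $[0,\dl]$ with the given $\sg$, producing $y^+\in H^1([0,\dl],\cx)$ with $A_L(y^+,[0,\dl])<A_L(x,[0,\dl])$, with $y^+\equiv x$ on $[\dl_0^+,\dl]$, collision-free on $(0,\dl]$, and $y^+(0)=x(0)+\ep_0\sg$ for a prescribed small $\ep_0>0$; then apply the time-reversed Proposition \ref{deform1} on $[-\dl,0]$ with the same $\sg$, choosing its parameters so that its displacement at $t=0$ is that same $\ep_0$ (possible since that construction, too, realizes every sufficiently small displacement, so one only has to fix $\ep_0$ small enough at the outset). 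This yields $y^-\in H^1([-\dl,0],\cx)$ with $A_L(y^-,[-\dl,0])<A_L(x,[-\dl,0])$, with $y^-\equiv x$ on $[-\dl,-\dl_0^-]$, collision-free on $[-\dl,0)$, and $y^-(0)=x(0)+\ep_0\sg=y^+(0)$. Setting $y:=y^-$ on $[-\dl,0]$ and $y:=y^+$ on $[0,\dl]$ gives an $H^1$ path with $y\equiv x$ on $[-\dl,-\dl_0]\cup[\dl_0,\dl]$, where $\dl_0:=\max\{\dl_0^+,\dl_0^-\}$, collision-free on $[-\dl,\dl]\setminus\{0\}$, with $y(0)=x(0)+\ep_0\sg$ enjoying the separation properties of (c) exactly as in Proposition \ref{deform1}(c); and since the action over $[-\dl,\dl]$ is additive over $[-\dl,0]$ and $[0,\dl]$, one gets $A_L(y,[-\dl,\dl])<A_L(x,[-\dl,\dl])$, which is the assertion.

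The only genuinely new point compared with Proposition \ref{deform1}, and the step I expect to require some care, is this matching at $t=0$: it is essential both that a \emph{single} deformation direction $\sg$ can be used on the two sides --- which is exactly what the two-sided sign condition $\langle\sg_{jk},s^\pm_{jk}\rangle\ge 0$ provides --- and that the displacement $\ep_0$ produced by Proposition \ref{deform1} on one side can be matched on the other, which works because that displacement can be prescribed to be any sufficiently small positive number. Beyond that, one should check that the concatenation is genuinely in $H^1$ (immediate, since each piece is $H^1$ and the two pieces take the same value at $t=0$) and that both deformations are supported strictly inside $(-\dl,\dl)$ so that $y\equiv x$ near the endpoints (immediate from statement (a) of Proposition \ref{deform1} on each side). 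Everything else is Proposition \ref{deform1} invoked twice, with no new estimates needed.
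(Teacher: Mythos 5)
Your proposal is correct and is essentially the paper's own proof: Proposition \ref{deform1} is applied to $x|_{[0,\dl]}$ and, after reversing time, to $x|_{[-\dl,0]}$ with the same $\sg$ (the hypothesis $\langle \sg_{jk}, s^{\pm}_{jk}\rangle \ge 0$ supplying condition \eqref{eqn:dfm dir} on each side), and the two deformed arcs are glued at $t=0$ where both equal $x(0)+\ep_0\sg$, the strict decrease of the action following by additivity over $[-\dl,0]$ and $[0,\dl]$. The matching of $\ep_0$ that you spend effort on is already implicit in the statement of Proposition \ref{deform1} (``for $\ep_0>0$ small enough''), so it suffices to take a single $\ep_0$ below both thresholds; your alternative justification --- fixing the blow-up index $n$ and letting $\ep$ vary down to $0$ --- is not quite warranted by the proof of Proposition \ref{deform1}, since the convergence there is for fixed $\Phi$ (fixed $\ep$) as $n\to\infty$, but this does not affect the structure or validity of your argument.
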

\begin{proof}
	 By applying Proposition \ref{deform1} to $x|_{[-\dl, 0]}$ and $x|_{[0, \dl]}$ correspondingly (for $x|_{[-\dl, 0]}$, one needs to first reverse the time and then shift it by $\dl$), we get two collision-free paths $y^- \in H^1([-\dl, 0], \cx)$ and $y^+ \in H^1([0, \dl], \cx)$ with $y^-(0) = y^+(0) = x(0) + \ep_0 \sg.$ Simply piece them together at $t=0$, we get a path $y$ with the desired properties.  
\end{proof}

\section{Binary collision} \label{bicoll}

The local deformation results obtained in Section \ref{sec local} imposed strong constraints on the possible directions of deformation, which limits their application. In this section, we show such constraints can be substantially relaxed for isolated binary collisions. This result was not available in \cite{Mo99} and is the key property in our proof of the main result. We believe it could be useful in other action minimizing problems as well, for example see \cite{Y15c}, \cite{Y16}.

Following the notations introduced in the previous section, we fix an arbitrary isolated $\bk$-cluster collision solution $x \in H^1([0,\dl], \cx)$ with an isolated $\bk$-cluster collision at $t=0$. However in this section we only consider the planar $N$-body problem ($N \ge 2$ and $d=2$) and the set $\bk$ will only include two different indices. Without loss of generality, we set $\bk = \{2,3\}$ for the rest of this section. 

We need a couple of results regarding the asymptotic behavior of the masses as they approach to the collision. In the rest of the paper, for $t>0$ small enough, by $f(t) \sim Ct^{\beta}$, we mean $f(t) = Ct^{\beta} +o(t^{\beta})$.
\begin{prop}
	\label{sundman} Let $I_k(t)=I_i(x(t))$ be defined as in \eqref{ik}, then for $t>0$ small enough
	$$ \ik(t) \sim (\kp t)^{\frac{4}{2 + \al}}, \quad \dot{I}_{\bk}(t) \sim \frac{4}{2+\al}\kp (\kp t)^{\frac{2-\al}{2 +\al}}, \quad \ddot{I}_{\bk}(t) \sim 4 \frac{2-\al}{(2+\al)^2}{ \kp}^2 (\kp t)^{\frac{-2\al}{2+\al}}. $$
	The constant $\kp$ is the same as the one given in \eqref{qb}.
\end{prop}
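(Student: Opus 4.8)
The plan is to read off all three asymptotics from the blow-up convergence of Proposition \ref{uniform converges} together with the scaling covariance of $\ik$. The point that makes this clean, and is special to the case $\bk=\{2,3\}$ treated here, is that all centered two-body configurations are proportional, so after normalization there is, up to an overall rotation, only one; consequently the parabolic homothetic solution $\qb(t)=(\kp t)^{\pw}\sbb$ attached to any subsequential limit $\sbb$ of $\{s(\lmd_n)\}$ is unique up to rotation, and therefore the rotation-invariant functions $t\mapsto\ik(\qb(t))=(\kp t)^{\frac{4}{2+\al}}$, $\frac{d}{dt}\ik(\qb(t))$ and $\frac{d^2}{dt^2}\ik(\qb(t))$ do not depend on which limit is chosen. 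This is exactly what makes $\kp$ well defined and identifies it with the constant of \eqref{qb}.

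I would first record the scaling identities coming from $\ql(t)=\lmd^{-\pw}q(\lmd t)$ and $\ik(q(t))=\ik(x(t))$: for every $\lmd>0$ and $t>0$,
\begin{gather*}
\ik(\ql(t))=\lmd^{-\frac{4}{2+\al}}\ik(x(\lmd t)),\qquad \frac{d}{dt}\ik(\ql(t))=\lmd^{\frac{\al-2}{2+\al}}\dot{I}_{\bk}(\lmd t),\\
\frac{d^2}{dt^2}\ik(\ql(t))=\lmd^{\frac{2\al}{2+\al}}\ddot{I}_{\bk}(\lmd t),
\end{gather*}
where on the right $\dot{I}_{\bk}$, $\ddot{I}_{\bk}$ denote time derivatives of $t\mapsto\ik(x(t))$. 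Now fix any $T\ge 1$ and any sequence $\lmd_n\searrow 0$; after passing to a subsequence, $\{s(\lmd_n)\}$ converges to a central configuration $\sbb$ of the $\bk$-body problem by Proposition \ref{cc}, so by Proposition \ref{uniform converges} $\qln(1)\to\qb(1)$ and $\frac{d}{dt}q^{\lmd_n}\,|_{t=1}\to\dot{\qb}(1)$. Evaluating the first two scaling identities at $t=1$ and letting $n\to\infty$, the left sides tend to $\ik(\qb(1))=\kp^{\frac{4}{2+\al}}$ and $\frac{d}{dt}\ik(\qb(t))\,|_{t=1}=\frac{4}{2+\al}\kp^{\frac{4}{2+\al}}$; hence $\lmd_n^{-\frac{4}{2+\al}}\ik(x(\lmd_n))\to\kp^{\frac{4}{2+\al}}$ and $\lmd_n^{\frac{\al-2}{2+\al}}\dot{I}_{\bk}(\lmd_n)\to\frac{4}{2+\al}\kp^{\frac{4}{2+\al}}$. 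Since $\{\lmd_n\}$ was arbitrary these are genuine limits as $\lmd\to 0^+$, and rearranging yields the first two displayed relations.

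For the third relation, Proposition \ref{uniform converges} is not enough, as it controls only $\qln$ and $\frac{d}{dt}q^{\lmd_n}$, not second derivatives. I would fill the gap by showing $\frac{d^2}{dt^2}q^{\lmd_n}\,|_{t=1}\to\ddot{\qb}(1)$: on a compact neighborhood of $1$ in $(0,T]$ the uniform limit keeps $\qln$ bounded away from collision, while in the blow-up the bodies outside $\bk$ recede to infinity, so the force they exert on the $\bk$-cluster there tends to $0$; hence $\frac{d^2}{dt^2}q^{\lmd_n}$, read off from \eqref{N body}, converges to the internal force of the limiting isolated two-body cluster, which equals $\ddot{\qb}$. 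Granting this, the third scaling identity at $t=1$ gives $\lmd_n^{\frac{2\al}{2+\al}}\ddot{I}_{\bk}(\lmd_n)\to\frac{d^2}{dt^2}\ik(\qb(t))\,|_{t=1}=\frac{4(2-\al)}{(2+\al)^2}\kp^{\frac{4}{2+\al}}$, which rearranges to $\ddot{I}_{\bk}(t)\sim 4\frac{2-\al}{(2+\al)^2}\kp^{2}(\kp t)^{\frac{-2\al}{2+\al}}$.

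An alternative for the last estimate, bypassing the second-derivative convergence, is the Lagrange--Jacobi identity for the binary cluster, $\frac12\ddot{I}_{\bk}=K^{\mathrm{rel}}_{\bk}-\frac{\al}{2}U_{\bk}+(\text{bounded interaction with bodies outside }\bk)$, together with the fact that the cluster's internal energy $K^{\mathrm{rel}}_{\bk}-U_{\bk}$ has a finite limit as $t\to 0^+$ (its derivative is $O(t^{-\frac{\al}{2+\al}})$, hence integrable near $0$); then $K^{\mathrm{rel}}_{\bk}$ and $U_{\bk}$ are each $\sim c\,|x_2-x_3|^{-\al}$, and combining the leading $|x_2-x_3|^{-\al}$ terms with $\ik(t)\sim(\kp t)^{\frac{4}{2+\al}}$ produces the same constant. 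The only non-routine point, in either approach, is this control of the second-order behaviour (equivalently, of the error terms in Lagrange--Jacobi); once it is in hand, the rest is bookkeeping with the exponents $\pw$ and $\frac{4}{2+\al}$.
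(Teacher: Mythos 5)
The paper offers no proof of Proposition \ref{sundman} at all: it is quoted as the classical Sundman estimate with a pointer to \cite[(6.25)]{FT04}, so your argument is necessarily a different route, and as a derivation it is essentially correct. The scaling identities are right; the first two asymptotics do follow from Proposition \ref{uniform converges} evaluated at $t=1$, combined with your (correct and necessary) observation that for a binary cluster the limiting normalized configuration is unique up to rotation, so $\kp$ and the rotation-invariant quantities $\ik(\qb(1))$, $\frac{d}{dt}\ik(\qb(t))\big|_{t=1}$ are subsequence-independent and the usual sub-subsequence argument upgrades the limit along $\{\lmd_n\}$ to a genuine limit as $t\to 0^+$. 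You also correctly identify the only real issue, namely that $\ddot{I}_{\bk}$ needs second-order information not contained in Proposition \ref{uniform converges}, and both of your fixes work: the blown-up equations of motion give $C^2$ convergence near $t=1$ because the rescaled external forces carry a factor $\lmd^{2-\pw}\to 0$ while the internal two-body force is scale invariant; and the Lagrange--Jacobi route closes because for a binary cluster $U_{\bk}(q)=U_{\bk}(s)\,\ik^{-\al/2}$ exactly, and with $\kp$ as in \eqref{kp} one checks $(4-2\al)U_{\bk}(s)=\frac{4(2-\al)}{(2+\al)^2}\kp^2$, reproducing the stated constant. The one caveat is logical order: in \cite{FT04} the blow-up convergence (their (7.4), quoted here as Proposition \ref{uniform converges}) is itself proved using the asymptotic estimates (6.25), i.e.\ the very statement you are proving, so your first argument is a consistency derivation within the paper's imported toolkit rather than an independent proof. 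Your Lagrange--Jacobi alternative --- cluster energy has a finite limit since its derivative is integrable near $0$, then $\ddot{I}_{\bk}=4h_{\bk}+(4-2\al)U_{\bk}+O(|q|)$ with $U_{\bk}$ dominant --- is essentially the classical proof behind the citation and is the route to take if a self-contained argument is wanted; it is particularly clean here because $U_{\bk}$ restricted to normalized centered two-body configurations is a single constant.
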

This is the well-known Sundman's estimates (a proof can be found in \cite[(6.25)]{FT04}). 

Next we need to know the asymptotic directions of the masses as they approach to the collision. For this, it's better to use polar coordinates: 
\begin{equation}
\label{polar} q(t) = (q_2(t), q_3(t)):= (\rho_2(t) e^{\tht_2(t)}, \rho_3(t) e^{\tht_3(t)}).
\end{equation}
Here $q(t)$ is defined as in \eqref{q}. Then $m_2 q_2(t) + m_3 q_3(t) =0$ implies
\begin{equation}
\label{rho} \tht_2(t) = \tht_3(t) +\pi, \;\; \rho_3(t) = \frac{m_2}{m_3}\rho_2(t) = \sqrt{\frac{m_2}{m_3(m_2 + m_3)}} \ik^{\frac{1}{2}}(t).
\end{equation}
The following result can be seen as a generalization of Proposition \ref{cluster collision} in the case of binary collision. 
\begin{prop}
	\label{2cc}
	There are $\tht_3^+$ and $\tp_2 = \tp_3 + \pi$ with the following limits hold
	\begin{enumerate}
	 \item $ \lim_{t \to 0^+} \tht_2(t) = \tht^+_2, \; \lim_{t \to 0^+}\tht_3(t) = \tht^+_3;$
	 \item $ \lim_{t \to 0^+} \dot{\tht}_2(t) = \lim_{t \to 0^+} \dot{\tht}_3(t) = 0.$
	 \end{enumerate} 
	\end{prop}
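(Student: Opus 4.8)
\textbf{Proof strategy for Proposition \ref{2cc}.}
The plan is to reduce the whole statement to the behavior of the single angle $\tht_2$. Indeed \eqref{rho} gives $\tht_2(t)=\tht_3(t)+\pi$, so $\td_2\equiv\td_3$, and it suffices to control $\tht_2$ and then set $\tp_3:=\tp_2-\pi$. I would work with the relative vector of the colliding pair, $\xi(t):=x_2(t)-x_3(t)=q_2(t)-q_3(t)=r(t)e^{i\tht_2(t)}$, where $r=\rho_2+\rho_3$ and, by \eqref{rho}, $r^2$ is a fixed positive multiple of $\ik$. Subtracting the equations of motion \eqref{N body} for $m_2$ and $m_3$ and splitting off the singular pair interaction gives
\[
\ddot x_2=-m_3\frac{\xi}{|\xi|^{\al+2}}+b_2(t),\qquad
\ddot x_3=m_2\frac{\xi}{|\xi|^{\al+2}}+b_3(t),
\]
where $b_2,b_3$ collect the forces exerted by the bodies outside $\bk$; since $x(0)$ is an \emph{isolated} $\bk$-cluster collision, those bodies remain a definite distance from $x_0$ and from one another near $t=0$, so $b_2$, $b_3$, and hence $b:=b_2-b_3$, are bounded there, and $\xi$ satisfies $\ddot\xi=-(m_2+m_3)\xi|\xi|^{-\al-2}+b$. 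The same cancellation gives $\ddot x_0=\frac1{m_0}(m_2b_2+m_3b_3)$, so $\dot x_0$ is bounded near $0$ as well.

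Next I take the angular component of the $\xi$-equation: the radial Keplerian term drops out, leaving $\frac{d}{dt}(r^2\td_2)=\mathrm{Im}(\bar\xi\,b)$, whence $\bigl|\frac{d}{dt}(r^2\td_2)\bigr|\le|\xi|\,|b|\le Cr$. By the Sundman estimates (Proposition \ref{sundman}), $r(t)$ is comparable to $t^{\pw}$ near $0$, which is integrable; therefore $\om(t):=r^2(t)\,\td_2(t)$ extends continuously to $t=0$, with some limit $\om(0)=c_0$.

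The heart of the matter is to show $c_0=0$, and this is where conservation of energy $K-U\equiv h$ enters. I would decompose $K=\tfrac12 m_0|\dot x_0|^2+K_{int}+K_{rest}$, where $K_{int}=\tfrac12\sum_{k\in\bk}m_k(\dot\rho_k^2+\rho_k^2\td_k^2)$ is the internal kinetic energy of the pair and $K_{rest}$ is the kinetic energy of the remaining bodies, together with $U=U_{\bk}+U_{rest}$. By the previous paragraph $\dot x_0$ is bounded, and because the collision is isolated $K_{rest}$ and $U_{rest}$ are bounded near $0$; hence $K_{int}-U_{\bk}$ stays bounded as $t\to0^+$. Now $U_{\bk}$ is a positive multiple of $r^{-\al}$, i.e. of order $t^{-\al\pw}$, while the angular part of $K_{int}$ equals $\tfrac12\ik\td_2^2$, which (using $\om=r^2\td_2$ and $r^2\propto\ik$) is a positive multiple of $\om^2/\ik$, i.e. of order $\om^2\,t^{-2\pw}$. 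If $c_0\ne0$, this angular part alone would blow up like $t^{-2\pw}$; since $2\pw=\tfrac4{2+\al}>\tfrac{2\al}{2+\al}=\al\pw$ for $\al<2$, it would dominate $U_{\bk}$, contradicting the boundedness of $K_{int}-U_{\bk}$. Hence $c_0=0$.

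Finally, with $c_0=0$ we have $|\om(t)|=\bigl|\int_0^t\mathrm{Im}(\bar\xi\,b)\,ds\bigr|\le C\int_0^t s^{\pw}\,ds\le Ct^{1+\pw}$, so $|\td_2(t)|=|\om(t)|/r^2(t)\le Ct^{1+\pw}/t^{2\pw}=Ct^{1-\pw}=Ct^{\al/(2+\al)}\to0$; together with $\td_2\equiv\td_3$ this proves (2). Since $\td_2$ is in particular bounded near $0$, $\tht_2$ has a finite limit $\tp_2$ as $t\to0^+$, and $\tp_3:=\tp_2-\pi$ gives (1). I expect the main obstacle to be the energy/growth-rate step ruling out $c_0\ne0$: it is precisely where the weak-force bound $\al<2$ is used, and it requires carefully peeling off the bounded ``external'' contributions to the energy so that the internal angular kinetic energy of the pair can be weighed against the internal potential $U_{\bk}$.
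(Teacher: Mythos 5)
Correct, and essentially the same argument as the paper's: both proofs hinge on the relative angular momentum of the colliding pair, $\jk = r^2\td_2$, bound its derivative by the external torque, $|\dot{\jk}|\le C r\sim Ct^{2/(2+\al)}$ (Sundman), integrate, and divide by $r^2\sim Ct^{4/(2+\al)}$ to get $|\td_3|=|\td_2|\le Ct^{\al/(2+\al)}\to 0$ and hence convergence of the angles. The only difference is the sub-step giving $\jk\to 0$ at the collision: the paper reads it off in one line from the bounded energy of the $\bk$-subsystem via $|\jk|^2\le r^2(|\dx_2|^2+|\dx_3|^2)\le C(r^2+r^{2-\al})$, whereas you first extract the limit $c_0$ of $\jk$ and then exclude $c_0\ne 0$ by comparing the blow-up rates $t^{-4/(2+\al)}$ of the angular kinetic energy and $t^{-2\al/(2+\al)}$ of $U_{\bk}$ --- the same ingredients (bounded internal energy of the pair and $\al<2$) arranged slightly differently.
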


\begin{proof}
	Consider the relative angular momentum between $m_3$ and $m_2$
	$$\jk(t) = [x_3(t) - x_2(t)] \wedge [\dx_3(t) - \dx_2(t)]. $$
	By the triangle inequality
    \begin{equation}
    \label{eqn:jk} |\jk|^2  \le |x_3 - x_2|^2 |\dx_3 - \dx_2|^2 \le |x_2 - x_3|^2 (|\dx_3|^2 + |\dx_2|^2).
    \end{equation}
    Because the energy of the $\bk$-body sub-system is bounded (for a proof see \cite[(6.24)]{FT04})
    $$ \ey[m_2 |\xd_2(t)|^2 +m_3 |\xd_3(t)|^2] - \uk(x(t)) \le C, \;\; \forall t \in (0, \dl),$$
    the fact $m_j>0$, for any $j$, implies 
    $$ |\xd_2(t)|^2 + |\xd_3(t)|^2 \le C_1 + C_2 \uk(x(t)), \;\; \forall t \in (0, \dl).$$
    As $2-\al>0$, plugging this into \eqref{eqn:jk} shows
	\begin{equation}
	\label{jkl} \lim_{t \to 0^+} \jk(t) = 0.
	\end{equation}

	At the same time
	$$\dot{\jk} = (x_3 - x_2) \wedge (\ddot{x}_3 - \ddot{x}_2) = x_3 \wedge \ddot{x}_3 + x_2 \wedge \ddot{x}_2 - x_3 \wedge \ddot{x}_2 - x_2 \wedge \ddot{x}_3. $$
    By \eqref{N body}, a simple computation shows 
	\begin{align*}
	 \dot{\jk}  & = (x_3 \wedge x_2) \sum_{ k \in \bn \setminus \bk} m_k( \frac{1}{|x_2 - x_k|^{\al+2}} - \frac{1}{|x_3 - x_k|^{\al + 2}})  \\
	 & + (x_3 -x_2) \wedge \sum_{k \in \bn \setminus \bk} m_k x_k ( \frac{1}{|x_2 - x_k|^{\al+2}} - \frac{1}{|x_3 - x_k|^{\al + 2}}). 
	\end{align*}
	Combing this with 
	$$ x_3 \wedge x_2 = \frac{m_2 - m_3}{m_2} q_3 \wedge x_0, \quad x_3 - x_2 = \frac{m_2 + m_3}{m_2 } q_3,$$
    we find, for any $t \in (0,\dl)$
	$$ | \dot{\jk}(t)| \le C_3 |x_3(t) \wedge x_2(t)| + C_4 |x_3(t) -x_2(t)| \le C_5 |q_3(t)| =C_5 \rho_3(t). $$
	By Proposition \ref{sundman} and \eqref{rho}, one gets $\rho_3(t) \sim Ct^{\frac{2}{2+\al}}$. Therefore
	$$ |\dot{\jk}(t)| \le C_6 t^{\frac{2}{2 + \al}}, \quad \forall t \in (0, \dl).$$
	Along with \eqref{jkl}, it shows
	$$ |\jk(t)| \le \int_0^t |\dot{\jk}(\tau)| \,d\tau \le C_5 t^{\frac{4+\al}{2+\al}}. $$

	Meanwhile in polar coordinates, 
	\begin{equation}
	\label{jk} \jk(t) =[q_3(t) -q_2(t)]\wedge[\qd_3(t) -\qd_2(t)] = \frac{(m_2 + m_3)^2}{m_2^2} \rho_3^2(t) \td_3(t).
	\end{equation}
	This implies
	$$ | \td_3(t)| \le C_6 t^{\frac{\al}{2 + \al}}. $$
	As a result $ \lim_{t \to 0^+} \td_3(t) = 0,$ and there must be a $\tp_3$ with $ \lim_{t \to 0^+} \tht_3(t) = \tp_3.$ The rest of the proposition follows from $\tht_2(t) = \tht_3(t) +\pi$ given in \eqref{rho}.	
\end{proof}
Let $s^+=(s^+_2, s^+_3) = (\rb_2 e^{\tp_2}, \rb_3 e^{\tp_3})$, where
\begin{equation}
\label{rhop}  \rb_2 = \sqrt{\frac{m_3}{m_2 (m_2 + m_3)}},  \quad \rb_3 = \sqrt{\frac{m_2}{m_3 (m_2 + m_3)}}. 
\end{equation}
Then $s^+$ is a normalized central configuration of the $\bk$-body problem. We point out that in the 2-body problem, every normalized centered configuration is a central configuration and it is unique up to rotation. By Proposition \ref{2cc}, $\lim_{t \to 0} s(t) = s^+$, where $s(t) = \ik^{-\ey} (t) q(t)$.

Given a normalized centered $\bk$-configuration $\sg =(\sg_2, \sg_3)$. Put it in polar coordinates: $\sg = (\sg_2, \sg_3) =(\rb_2 e^{i \phi_2}, \rb_3 e^{i \phi_3})$ with $ \phi_2 = \phi_3 + \pi.$

\begin{prop}
 \label{deform21}
 If the following condition holds
 \begin{equation} \label{eqn:binary}
 \begin{cases}
 |\phi_3 -\tp_3| \le \pi, &\text{ when } \al \in (1,2); \\
 |\phi_3 -\tp_3| < \pi, &\text{ when } \al =1,
 \end{cases}
 \end{equation}
 then for $\dl_0, \ep_0>0$ small enough, there is a new path $y \in H^1([0,\dl], \cx)$ satisfying $A_L(y, \dl) < A_L(x, \dl)$ and 
 \begin{enumerate}
 	\item[(a).] $y(t) = x(t), \; \forall t \in [\dl_0, \dl];$
 	\item[(b).] $y(t)$ is collision-free, for any $t \in (0, \dl];$
 	\item[(b).] $y|_{[0,\dl_0]}$ is a small deformation of $x|_{[0,\dl_0]}$ with $y(0) = x(0) + \ep_0 \sg$, in particular $y_2(0) \ne y_3(0)$ and $y_{k}(0) \ne y_j(0)$, for any $k \in \bk$ and $j \notin \bk$.
 	\end{enumerate}
 \end{prop}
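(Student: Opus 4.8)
The plan is to rerun the blow-up argument from the proof of Proposition \ref{deform1}, but to replace the linear shift of Lemma \ref{deform2} by a genuinely nonlinear, ``whip-around'' deformation tailored to binary collisions.

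First I would reduce everything to a two-body test-path problem. Just as in the proof of Proposition \ref{deform1}, using the uniform convergence of the blow-ups (Proposition \ref{uniform converges}) and the transfer estimate (Proposition \ref{psi phi}), it suffices to construct, for each small $\ep>0$, a collision-free $\qe\in H^1([0,T],\cx^{\bk})$ for the centered $\bk=\{2,3\}$ subsystem with $\qe(0)=\qb(0)+\ep\sg=\ep\sg$, $\qe(T)=\qb(T)$ and $A_{\lk}(\qe,T)<A_{\lk}(\qb,T)$, where $\qb(t)=(\kp t)^{\pw}s^+$ is the rectilinear, zero-energy ejection solution in the direction $\tp_3$. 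Recall that the centered $\{2,3\}$ subsystem is, up to the fixed linear identification of $q$ with $x_3-x_2$, a single particle in the central potential $\propto|\cdot|^{-\al}$. Once $\qe$ is produced, one blows back down with $\lmd_n\to 0$ and reads off properties (a)--(c) from the identity $A_L(y_n,\dl)-A_L(x,\dl)=\lmd_n^{\frac{2-\al}{2+\al}}[A_L(\yln,\dl/\lmd_n)-A_L(\xln,\dl/\lmd_n)]<0$ exactly as in Section \ref{sec local}.

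The construction of $\qe$ is the new ingredient. If $\langle\sg_{23},s^+_{23}\rangle\ge 0$, i.e.\ $|\phi_3-\tp_3|\le\pi/2$, the linear shift $\qe=\qb+\ep f\sg$ of Lemma \ref{deform2} already does the job; so assume $|\phi_3-\tp_3|$ larger. I would fix a small $\tau\in(0,T)$, keep $\qe=\qb$ on $[\tau,T]$, and on $[0,\tau]$ take $\qe$ to be a rescaled arc of a genuine (elliptic or parabolic) Kepler-type solution of the two-body problem that leaves $\ep\sg$ moving inward, passes once near a tiny pericenter — during which the relative direction turns by $|\phi_3-\tp_3|$ — and then re-joins $\qb$ smoothly, in position and velocity, at $t=\tau$. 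Rescaling the arc so that its right endpoint equals $\qb(\tau)$, the homogeneity of the two-body action (action $\sim(\text{length scale})^{(2-\al)/2}$) makes $A_{\lk}(\qe,\tau)$ of the same order $\tau^{(2-\al)/(2+\al)}$ as the piece $A_{\lk}(\qb,\tau)$ it replaces; a further Lemma \ref{deform2}-type shift applied to the rectilinear incoming part near $t=0$ then arranges $\qe(0)=\qb(0)+\ep\sg$ exactly while keeping $\qe$ collision-free, the error terms being controlled by Sundman's estimates (Proposition \ref{sundman}) and by the asymptotic direction given by Proposition \ref{2cc}.

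The crux, and the main obstacle, is the sharp comparison $A_{\lk}(\qe,\tau)<A_{\lk}(\qb,\tau)$: a rectilinear ejection arc is not action-minimizing among two-body arcs joining (near-)collision endpoints whose asymptotic directions differ by at most $\pi$ (resp.\ strictly less than $\pi$). This is precisely where hypothesis \eqref{eqn:binary} is forced. For the central potential $|\cdot|^{-\al}$, the angle swept near a low-angular-momentum pericenter tends, as the angular momentum $\to 0$, to a limit — equal to $\pi$ when $\al=1$ and strictly larger than $\pi$ when $1<\al<2$ (it is $\tfrac{\pi}{2-\al}$) — so the required turn $|\phi_3-\tp_3|$ can be realized by a whip-around arc that shrinks down onto the collision, and whose action therefore converges to that of $\qb$ from strictly below, exactly when $|\phi_3-\tp_3|\le\pi$, and only when $|\phi_3-\tp_3|<\pi$ in the Newtonian case. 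Turning this into a quantitative estimate — a uniform lower bound on the action deficit as the arc degenerates, that survives the rescaling to scale $\tau$ and the smooth gluing to $\qb$ — is the technical heart of the proof; the remainder is a bookkeeping repetition of the blow-up argument of Section \ref{sec local}.
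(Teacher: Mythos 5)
Your overall architecture is the same as the paper's: reduce, via the blow-up machinery of Section \ref{sec local} (Propositions \ref{uniform converges} and \ref{psi phi}, the functions $\Phi$ and $\Psi_n$, and the scaling identity for $A_L(y_n,\dl)-A_L(x,\dl)$), to producing a collision-free path for the centered two-body subsystem that starts at $\ep\sg$, rejoins the parabolic homothetic solution $\qb$, and has strictly smaller action. That reduction is fine and is exactly what the paper does. The gap is in the only step that is actually new for binary collisions: the strict comparison $A_{\lk}(z,T)<A_{\lk}(\qb,T)$ under the full angular condition \eqref{eqn:binary}. In the paper this is Lemma \ref{deform22}, which is deduced in the Appendix by rewriting the centered two-body action as a Kepler-type ($1$-center) action, $A_{\lk}(\qb,T)=\frac{m_0m_3}{m_2}A_{\lb}(\qb_3,T)$, and then invoking Proposition \ref{coll1} and Corollary \ref{coll2}: the existence, for $|\psi-\psi^+|\le\pi$ (resp.\ $<\pi$ when $\al=1$), of a genuine collision-free solution of \eqref{kepler problem} joining the prescribed ray at $t=0$ to $\gb(T)$ with strictly smaller action than the parabolic ejection arc. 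These comparison results are not proved in the paper either; they are quoted from \cite{Y15a}, following \cite{TV07} and \cite{ST12}, and for $\al=1$ from the appendix of \cite{FGN11} (Marchal). You neither prove nor cite an equivalent statement: you explicitly label it ``the technical heart'' and leave it open, so as written the proposal does not establish the proposition.

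Moreover, the heuristic you offer in its place does not suffice and is misleading at the borderline cases. Arguing that a family of ``whip-around'' arcs shrinking onto the collision has action ``converging to that of $\qb$ from strictly below'' is precisely the kind of soft limiting statement that cannot decide the sharp threshold: for $\al=1$ and turning angle exactly $\pi$, Gordon's theorem (Remark \ref{rem: Gordon}) shows the collision-ejection path is \emph{not} beatable, even though degenerating near-collision arcs exist with sweep angle tending to $\pi$; and for $\al\in(1,2)$ the non-strict case $|\phi_3-\tp_3|=\pi$ is included in \eqref{eqn:binary}, so a limit-from-below argument along a degenerating family would have to be made quantitative (a uniform action deficit surviving the rescaling to scale $\tau$ and the $C^1$-gluing to $\qb$), which is exactly the content of Proposition \ref{coll1}/Corollary \ref{coll2}. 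To repair the proposal, replace the whip-around construction and its heuristic by an appeal to (or a proof of) those Kepler-type comparison results, and then your blow-up bookkeeping goes through verbatim as in the paper's proof.
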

  \begin{rem}
 \label{rem: Gordon} For Newtonian potential, $\al=1$, by Gordon's classical result on Kepler problem \cite{Go77}, the result we obtained in Proposition \ref{deform21} is optimal, i.e., the corresponding result does not hold for $|\phi_3-\tht_3^+|=\pi$, when $\al =1$. 
 \end{rem}

 We notice that condition \eqref{eqn:dfm dir} in Proposition \ref{deform1} holds, when $\phi_3 \in [\tp_3- \frac{\pi}{2}, \tp_3 + \frac{\pi}{2}]$ and fails, when $\phi_3 \in  [\tp_3- \pi, \tp_3 + \pi] \setminus [\tp_3- \frac{\pi}{2}, \tp_3 + \frac{\pi}{2}]$. Therefore Proposition \ref{deform21} is a substantial improvement of Proposition \ref{deform1} in the case of binary collision. 

The proof of the above result follows the same idea used in the proof of Proposition \ref{deform1}. Let $\qb : [0, \pinf) \to \cx^{\bk}$ be the homothetic-parabolic solution defined in \eqref{qb} with $\sbb$ replaced by $s^+ =(s^+_2, s^+_3) = (\rb_2 e^{\tp_2}, \rb_3 e^{\tp_3})$. The next result generalizes Lemma \ref{deform2} in the case of binary collision.

\begin{lm}
	\label{deform22} For any $T>0$, if condition \eqref{eqn:binary} in Proposition \ref{deform21} holds, then there is a collision-free path $z \in H^1([0,T], \cx^{\bk})$ satisfying $A_{\lk}(z, T) < A_{\lk}(\qb, T)$ and
	\begin{enumerate}
		\item[(a).] $z(T) = \qb(T)$ and  $\text{Arg}(z_k(T)) = \tp_k,$ for $k =2,3;$
		\item[(b).] $z(0) = \ep \sg = \ep(\rb_2 e^{i \phi_2}, \rb_3 e^{i \phi_3}),$ for some $\ep > 0.$
	\end{enumerate}
\end{lm}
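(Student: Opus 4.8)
The plan is to construct $z$ as a deformation of $\qb$ that ``opens up'' the collision by moving the colliding pair along a path in the plane that never passes through the origin, going around it by an angle $|\phi_3 - \tp_3|$, while keeping the size profile essentially that of the homothetic–parabolic solution. Since $\bk=\{2,3\}$ and the two-body Lagrangian $\lk$ reduces (after passing to the relative coordinate $q_3 - q_2 = \frac{m_2+m_3}{m_2} q_3$ and subtracting the center-of-mass motion, which is zero here) to a planar Kepler-type problem with homogeneity $\al$, the whole computation takes place for a single curve in $\rr^2 \setminus \{0\}$. I would write the deformation in polar coordinates: keep the radial part $\rho_3(t)$ comparable to the parabolic profile $(\kp t)^{2/(2+\al)}$ on $[0,T]$, but let the argument interpolate from $\phi_3$ at $t=0$ to $\tp_3$ at $t=T$. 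Concretely, on a short initial layer $[0,\delta]$ (with $\delta \to 0$ as $\ep \to 0$, analogous to the $\ep^{(2+\al)/2}$-scale in Lemma 4.2) I would slide the relative vector radially outward from $\ep\,\bar\rho_3 e^{i\phi_3}$ to radius $\sim(\kp\delta)^{2/(2+\al)}$ while rotating its argument from $\phi_3$ to $\tp_3$; on $[\delta,T]$ I would simply follow $\qb$ (which has constant argument $\tp_3$), so that (a) holds automatically.

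The estimate then splits, as in Lemma 4.2, into a potential contribution and a kinetic contribution over the initial layer. For the potential: because on $[0,\delta]$ the deformed path has radius at least comparable to $\qb$'s radius (I would choose the radial profile monotone and bounded below appropriately), and $\al < 2$, the potential term is either negative or of lower order; the key point is the same ``gain'' of order $\delta^{(2-\al)/2}$ (equivalently a power of $\ep$) coming from the fact that on the innermost part the deformed radius stays away from $0$ while $\qb$'s radius is tiny — this is exactly the mechanism producing $-C_1\ep^{(2-\al)/2}$ in \eqref{a1}. For the kinetic term I must pay for the rotation: the angular speed needed to turn by $|\phi_3-\tp_3|$ over a time interval on which $\rho_3$ grows like a power of $t$ contributes $\int \rho_3^2 \td_3^2\,dt$, and here the homogeneity is critical. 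This is where Gordon's theorem on the Kepler problem enters conceptually: the parabolic (zero-energy) homothetic solution is the unique minimizer of the Kepler action among paths turning by angle $\le \pi$ when $\al \in (1,2)$ (with the inequality becoming strict at $\pi$ only in the Newtonian borderline, which is why \eqref{eqn:binary} is an equality for $\al \in (1,2)$ but strict for $\al = 1$). So the strategy is: rather than bounding the rotation cost by a crude $\ep$-power, compare the full action of the rotated path to that of $\qb$ using the Gordon/Kepler lower bound, showing the rotation by up to $\pi$ costs \emph{no more} than the radial parabolic action (strictly less when $|\phi_3-\tp_3|<\pi$, and equal at $\pi$ for $\al>1$), and then let the genuine strict gain $-C_1\ep^{(2-\al)/2}$ from the potential layer tip the balance.

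I would carry out the steps in this order: (1) reduce $\lk$ on $\cx^{\bk}$ to the planar $\al$-Kepler action in the relative variable; (2) recall/quote the Gordon-type action formula for zero-energy arcs of the $\al$-Kepler problem and the fact that $\qb$ realizes the minimum over arcs spanning angle $\le\pi$, strictly unless the angle equals $\pi$ with $\al=1$; (3) define $z$ explicitly in polar coordinates with the two-layer structure above, verifying $z(0)=\ep\sg$, $z(T)=\qb(T)$, $\mathrm{Arg}(z_k(T))=\tp_k$, and collision-freeness (the radius never vanishes on $(0,T]$, and the argument sweep of at most $\pi$ keeps the curve simple and off the origin); (4) estimate $A_{\lk}(z,T)-A_{\lk}(\qb,T) = (\text{rotation cost}) + (\text{radial/potential gain})$, bounding the first nonpositively (or by $o(\ep^{(2-\al)/2})$) via step (2) and showing the second is $\le -C_1\ep^{(2-\al)/2}$ as in Lemma 4.2; (5) conclude the difference is negative for $\ep$ small. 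The main obstacle is step (2)/(4): making the comparison with the rotated arc quantitative enough — in particular handling the Newtonian case $\al=1$ where the Gordon bound is only strict for angle strictly less than $\pi$, so the strict inequality $|\phi_3-\tp_3|<\pi$ in \eqref{eqn:binary} must be used to extract a genuine (not merely non-positive) contribution from the rotation, or else the potential gain alone must be shown to dominate. Care is also needed because the deformation is only of the $\bk$-cluster and must be glued to the rest of $x$ later (via Propositions 3.4–3.6), but that gluing is exactly the $\Psi_n,\Phi$ machinery already set up, so it does not enter the proof of this lemma.
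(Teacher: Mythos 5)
Your step (1) --- reducing $A_{\lk}$ to the planar $\al$-Kepler action of the relative coordinate --- is exactly how the paper's appendix proof begins, and you correctly identify that the $\al=1$ borderline at angle $\pi$ is governed by Gordon's theorem (Remark \ref{rem: Gordon}). But the core of your plan has a genuine gap at the step that carries all the difficulty: controlling the cost of turning the colliding pair by an angle $|\phi_3-\tp_3|$ that may be as large as $\pi$. With your two-layer construction (rotation confined to an initial layer of length $\dl\sim\ep^{\pwr}$, radius comparable to the parabolic profile, hence of size $\lesssim\ep$ there), the angular kinetic term contributes on the order of $|\phi_3-\tp_3|^2\,\ep^{\frac{2-\al}{2}}$ --- the \emph{same} power of $\ep$ as the potential gain $-C_1\ep^{\frac{2-\al}{2}}$ of Lemma \ref{deform2}, with a constant growing like the square of the angle. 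So the bookkeeping you propose in step (4) (``rotation cost nonpositive or $o(\ep^{(2-\al)/2})$'') is not what this construction delivers; it proves the lemma only for small angles, not up to (and, for $\al\in(1,2)$, including) the angle $\pi$ required by \eqref{eqn:binary}.

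The fact you invoke in step (2) to repair this is stated backwards and is not available in the form you need. You assert that $\qb$ ``realizes the minimum over arcs spanning angle $\le\pi$''; if that were true, comparing your rotated path with $\qb$ could never yield the required strict inequality. What is actually needed is the \emph{opposite} statement: the parabolic ejection arc can be strictly beaten by a collision-free solution of the $\al$-Kepler problem whose initial direction differs from the ejection direction by up to $\pi$ (strictly less for $\al=1$). This is a nontrivial theorem, not a ``Gordon-type formula'' one can quote for general $\al\in(1,2)$ (Gordon's result is the $\al=1$ case and at angle exactly $\pi$ it works \emph{against} you); it is precisely Proposition \ref{coll1} and Corollary \ref{coll2} of the paper, taken from \cite{Y15a} (following \cite{TV07}, \cite{ST12}), with Marchal's argument (\cite{FGN11}) for $\al=1$. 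Once that input is in hand, no layer construction or $\ep$-power balancing is needed at all: the paper takes the cheaper Kepler arc $z_3$ from Corollary \ref{coll2}, sets $z_2=\frac{m_3}{m_2}|z_3|e^{i(\tht_3+\pi)}$, and transfers the strict inequality to the cluster via the identity $A_{\lk}(\qb,T)=\frac{m_0m_3}{m_2}A_{\lb}(\qb_3,T)$. So either you must cite that existence result (at which point your two-layer deformation is superfluous), or you must prove it --- and your proposal contains no argument for it.
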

A proof of Lemma \ref{deform22} will be given in the Appendix. Now we will use it to prove Proposition \ref{deform21}. 
\begin{proof}
	
	[\textbf{Proposition \ref{deform21}}] By Lemma \ref{deform22} there is a $ z \in H^1([0, T], \cx^{\bk})$ satisfies all the properties listed there. Because $z(T) = \qb(T)$, we can extend the domain of $z$ to $[0,T]$ by simply attaching $\qb|_{[T, T]}$ to it. We will still call the new curve $z$. 
	
	Define $\Phi \in H^1([0, T], \cx^{\bk})$ by $\Phi(t) = z(t) - \qb(t)$, just as in \eqref{phi} with $\qe$ replaced by $z$, and $\Psi_n \in H^1([0, T], \cx^{\bk})$ as in \eqref{psi}. The rest of the proof is the same as Proposition \ref{deform1} and we will not repeat it here.
\end{proof}

Like Section \ref{sec local}, although we only talked about an isolated binary collision happening at $t=0$. By reversing the time, similar results as above hold when an isolated binary occurs at $t=T_0.$ Similarly a stronger result than Proposition \ref{deform3} can also be obtained in the case of an isolated binary collision. However this will not be needed in this paper, as Proposition \ref{deform3} will be enough for us. 

\section{Proof of Proposition \ref{prop}} \label{weak}
The existence of such an action minimizer $x \in \Om$ following from the coercivity and lower semi-continuous of $A_L$, for the details see \cite{Mo99}. What's left is to prove $x(t)$ is collision-free, for any $t \in [0,T_0]$. While Marchal's approach can be used to show this when $t \in (0, T_0)$ (although it does not work at $t=0$ or $T_0$), we show the alternative approach proposed by Montgomery in \cite{Mo99} will also work. 

Because the Lagrangian $L_{\mcc}$ in the shape space is symmetric with respect to the syzygy plane $\{ w_3 = 0\}$ and the isosceles plane $c(M_3)$ (as $m_1=m_2$), we may assume  $\pi(x(t))$ (the projection of $x(t)$ to the shape space)  never crosses the syzygy plane or $c(M_3)$, i.e.,
\begin{equation}
\label{iso} w_3(x(t)) \ge 0, \;\; |x_3(t) - x_1(t)| \ge |x_3(t) - x_2(t)|, \;\; \forall t \in [0,T_0].
\end{equation}

As a minimizer, $x$ can only have isolated collisions and its energy is conserved through collisions (for a proof see \cite{Mo99} or \cite{Ce02}). By a  contradiction argument, let's assume $x(t_0)$ is an isolated collision for some $t_0 \in [0, T_0]$.
For the three body problem, a collision is either a triple collision or a binary collision, we will discuss them separately. 

First let's assume $x(t_0)$ is a triple collision. Let $s(t) = I(t)^{-\ey} x(t)$ be a normalized configuration. By a result of Siegel (\cite{Si41}), when $t$ approaches to $t_0^+$ (or $t_0^-$), $s(t)$ converges to a normalized central configuration $s^+$ (or $s^-$). 
\begin{lm}
	\label{lm1} If $t_0 \in (0,T_0),$ $x(t_0)$ can't be a triple collision. 
\end{lm}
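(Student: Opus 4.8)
The plan is to rule out a triple collision at an interior time $t_0 \in (0, T_0)$ by using the local deformation result for cluster collisions, Proposition \ref{deform3}, applied with $\bk = \bn = \{1,2,3\}$. Since $x$ is an action minimizer, any interior collision is isolated and the energy is conserved through it, so after a time shift we may regard $x$ as an isolated $\bk$-cluster collision solution on $[-\dl, \dl]$ in the sense of the extended Definition \ref{cluster collision}. By Siegel's theorem, $s(t) = I(t)^{-\ey}x(t)$ converges to normalized central configurations $s^{\pm}$ as $t \to t_0^{\pm}$; for the planar three body problem the only central configurations (up to rotation) are the two equilateral Lagrange configurations $L^{\pm}$ and the three Euler configurations $E_1, E_2, E_3$.

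The key step is to produce a normalized centered configuration $\sg = (\sg_1, \sg_2, \sg_3)$ satisfying the deformation-direction condition $\langle \sg_{jk}, s^{\pm}_{jk} \rangle \ge 0$ for all $\{j<k\}$, which also respects the symmetry constraints \eqref{iso} (i.e., the deformed path must still be allowed to stay on the correct side of the syzygy plane and the isosceles plane $c(M_3)$, or rather the deformation should be compatible with reflecting across them). The natural choice is to take $\sg$ proportional to one of the limiting central configurations, say $\sg = s^+$ (or a suitable average/interpolation of $s^+$ and $s^-$). One then checks that $\langle s^+_{jk}, s^+_{jk}\rangle = |s^+_{jk}|^2 \ge 0$ trivially, and that $\langle s^+_{jk}, s^-_{jk} \rangle \ge 0$ for every pair — this is where one uses that both $s^+$ and $s^-$ lie among the five central configurations and, crucially, that the symmetry reduction \eqref{iso} confines them to a limited region (e.g., $w_3 \ge 0$ forces a definite orientation, and $|x_3-x_1|\ge|x_3-x_2|$ restricts which Euler configuration can occur). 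Because $m_1 = m_2$ and the path is pinned to one side of $c(M_3)$, the possible pairs $(s^+, s^-)$ are few, and for each the inner-product inequalities can be verified by a short direct computation.

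Once such a $\sg$ is found, Proposition \ref{deform3} yields a competitor path $y \in H^1([-\dl,\dl],\cx)$ with $A_L(y,[-\dl,\dl]) < A_L(x,[-\dl,\dl])$, agreeing with $x$ outside $[-\dl_0,\dl_0]$ and collision-free on $[-\dl,\dl]\setminus\{0\}$; splicing $y$ back into $x$ gives an admissible path in $\Om$ (after possibly re-symmetrizing, i.e., folding back across the syzygy and isosceles planes — which does not increase the action since those are symmetries when $m_1=m_2$) with strictly smaller action, contradicting minimality. The symmetry constraint \eqref{iso} must be handled with care: the deformation $\ep_0\sg$ should be chosen so that $y$ can be reflected to satisfy \eqref{iso} without increasing its action, or one argues directly that a minimizer in the symmetry-reduced class that has an interior triple collision still admits a strictly-better local competitor in the reduced class.

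The main obstacle I anticipate is the case analysis for the direction $\sg$: one must exhibit a single $\sg$ that simultaneously satisfies $\langle \sg_{jk}, s^+_{jk}\rangle \ge 0$ and $\langle \sg_{jk}, s^-_{jk}\rangle \ge 0$ for all three pairs, and simultaneously keeps the deformed path inside the constraint set $\Om$ (respecting \eqref{iso}). If $s^+$ and $s^-$ happen to be "far apart" on the shape sphere (e.g., one the north pole $L^+$ and the other an Euler point on the equator), then $s^+ + s^-$ need not satisfy all six inequalities, and a cleverer choice — perhaps a configuration lying in the isosceles plane $M_3$, exploiting $m_1 = m_2$ and the sign conditions imposed by \eqref{iso} — will be required. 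I expect the proof to lean heavily on the fact that the constraint \eqref{iso} drastically narrows which central configurations $s^\pm$ can arise (realistically only $L^+$ and $E_3$, or $L^+$ and $L^+$), after which the inequality check becomes routine.
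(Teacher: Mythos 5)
Your overall strategy (treat the interior triple collision as an isolated $\bk$-cluster collision with $\bk=\{1,2,3\}$, invoke Siegel's theorem to get central configurations $s^{\pm}$, and then feed a suitable $\sg$ into Proposition \ref{deform3}) is the same as the paper's, but there is a genuine gap exactly at the point you yourself flag as the "main obstacle": you never produce a mechanism that makes $\langle \sg_{jk}, s^{\pm}_{jk}\rangle \ge 0$ verifiable in all cases. The constraint \eqref{iso} lives in the shape space, so it rules out $s^{\pm}=L^-$ but says nothing about the \emph{rotational placement} of $s^{\pm}$ in the inertial plane, which is where the inner products are computed. For instance, both $s^+$ and $s^-$ can be the equilateral shape $L^+$ while being rotated by an angle close to $\pi$ relative to each other, in which case $\langle s^+_{jk}, s^-_{jk}\rangle<0$ for every pair and your choice $\sg=s^+$ (or $s^++s^-$) fails. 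Your hope that \eqref{iso} reduces matters to "realistically only $L^+$ and $E_3$, or $L^+$ and $L^+$" is also not correct: a priori all pairs drawn from $\{L^+,E_1,E_2,E_3\}$ can occur, and even within a fixed pair the relative angle is unconstrained.

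The missing idea in the paper is simple but essential: since the triple collision occurs at the origin (the center of mass) and the endpoint condition of $\Om$ at $T_0$ is an Euler configuration $E_3$ of \emph{arbitrary angle}, one may rotate the post-collision piece $x|_{[t_0,T_0]}$ by any fixed angle about the origin without breaking continuity at $t_0$, without leaving $\Om$, and without changing the action. This rotational freedom lets one normalize the relative position of $s^+$ with respect to $s^-$: if $s^{\pm}$ have the same shape one arranges $s^+=s^-$; if one is $L^+$ and the other an Euler configuration, or if they are two different Euler configurations, one places them as in Figures \ref{8-2} and \ref{8-3} (Euler line perpendicular to a side of the triangle, resp. the two Euler lines perpendicular), after which $\langle s^+_{jk}, s^-_{jk}\rangle \ge 0$ holds for all pairs and Proposition \ref{deform3} with $\sg=s^+$ gives the contradiction. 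Incidentally, your concern about "re-symmetrizing" the deformed path to respect \eqref{iso} is not needed: \eqref{iso} is a normalization of the minimizer, not a constraint defining $\Om$; the local competitor only has to agree with $x$ near the endpoints $0$ and $T_0$, which it does since $t_0$ is interior.
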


\begin{proof}
	Assume $x(t_0) =0$ is a triple collision. By \eqref{iso}, $\pi(x(t))$ never goes below the syzygy plane. Hence $s^{\pm}$ can not be $L^-$. This left us with the following three cases.

	\textbf{Case 1}: both $s^+$ and $s^-$ are the Lagrange configuration $L^+$. Then after rotating $x|_{[t_0,T_0]}$ by be a proper angle in the inertial plane with respect to the origin, we can have $s^+ = s^-.$ 
	
	\textbf{Case 2}: $s^-$ is the Lagrange configuration $L^+$ and $s^+$ is an Euler configuration (the alternative case is similar). There are three different Euler configurations and by rotating $x|_{[t_0,T_0]}$ with a proper angle, we may put $s^-$ and $s^+$ in relative positions given by the pictures in Figure \ref{8-2}, where $j^{\pm}$ indicates the position of $m_j$ in $s^{\pm}$. The straight line containing the Euler configuration is perpendicular to one of the sides of the equilateral triangle as indicated in the pictures. 
	
	\begin{figure}
		\centering
		\includegraphics[scale=0.7]{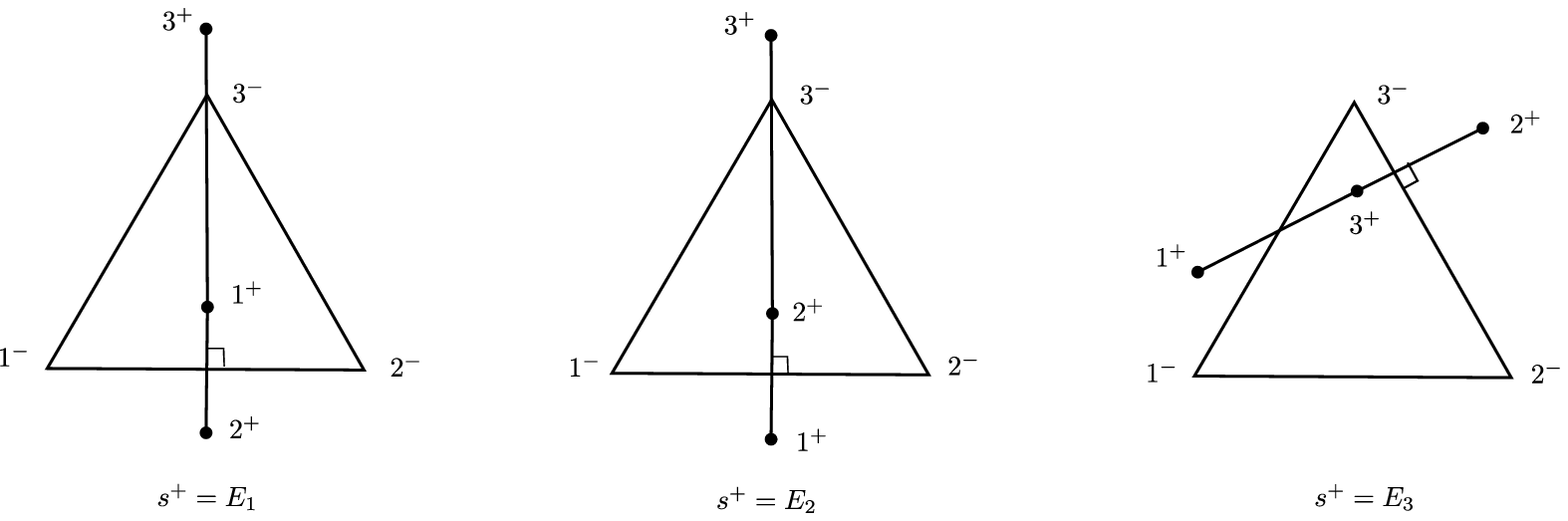}
		\caption{}
		\label{8-2}
		
	\end{figure}
	 
	\textbf{Case 3}: both $s^-$ and $s^+$ are Euler configurations. If $s^-$ and $s^+$ are the same type, again we may assume $s^+ = s^-$; otherwise like in \textbf{Case 2}, we may assume the relative positions of $s^+$ and $s^-$ are given by the pictures in Figure \ref{8-3}, where the straight lines containing the Euler configurations are perpendicular to each other.  
	
    \begin{figure}
    	\centering
    	\includegraphics[scale=0.7]{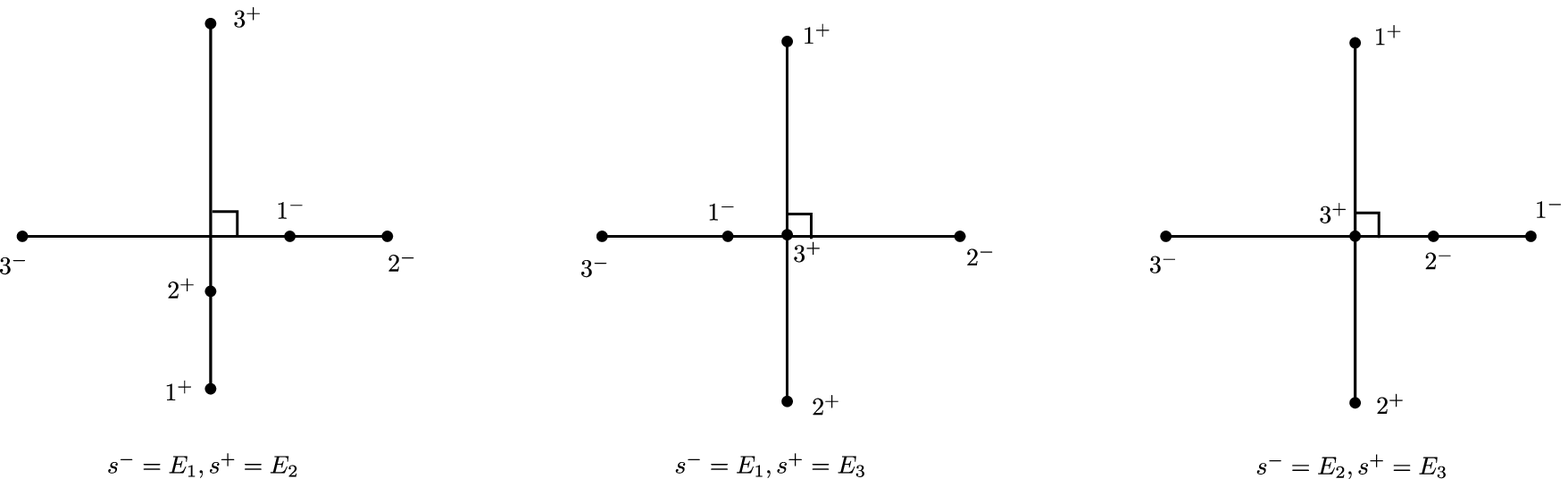}
    	\caption{}
    	\label{8-3}
    \end{figure}
    
	For all the relative positions of $s^{\pm}$ shown in Figure \ref{8-2} and \ref{8-3}, we can see $\langle s^+_{jk}, s^-_{jk} \rangle \ge 0$, for any $1 \le j < k \le 3$. This is automatically true when $s^-=s^+.$    
	
	This allows us to use Proposition \ref{deform3} (for example let $\sg =s^+$) to make a local deformation of $x$ near $t=t_0$ and get a new path $x^\ep \in \Om$ with $A_L(\xe, T_0) < A_L(x, T_0)$, which is absurd.  
		
\end{proof}

\begin{lm}
	\label{lm2} If $t_0 \in \{ 0, T_0\}$, $x(t_0)$ can't be a triple collision. 
\end{lm}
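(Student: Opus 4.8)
The plan is to reduce the boundary case $t_0 \in \{0, T_0\}$ to the interior case already handled in Lemma 4.1, exploiting the reflection symmetry that defines the set $\Om$. The key observation is that a minimizer $x \in \Om$ with a triple collision at one of the endpoints can be extended across that endpoint to a longer path on which the collision becomes interior, after which Proposition \ref{deform3} applies verbatim.

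First I would treat $t_0 = 0$. At $t=0$ the path $x$ starts at a type-$2$ syzygy with all three masses on the real axis in the order $x_1 < x_2 < x_3$; the boundary condition at $t=0$ is a symmetry condition (reflection across the real axis, i.e.\ across the syzygy plane), so we may reflect $x|_{[0,T_0]}$ across the real axis and reverse time to obtain an extended path $\tilde{x}$ on $[-T_0, T_0]$, setting $\tilde{x}(t) = (\bar{x}_1, \bar{x}_2, \bar{x}_3)(-t)$ for $t \in [-T_0, 0]$. Because $x$ is a minimizer in $\Om$ and the reflection is an action-preserving symmetry, $\tilde{x}$ is a minimizer of $A_L$ on $[-T_0, T_0]$ among paths with the corresponding symmetric boundary data, and in particular it is a (local) minimizer with respect to compactly supported variations near $t=0$. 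Now $\tilde{x}(0) = 0$ is an \emph{interior} isolated triple collision of $\tilde{x}$. As in Lemma 4.1, let $s^\pm = \lim_{t \to 0^\pm} s(t)$ be the limiting normalized central configurations given by Siegel's theorem; the reflection symmetry forces $s^- = \overline{s^+}$ (complex conjugate), and since $\pi(x(t))$ stays on or above the syzygy plane (condition \eqref{iso}), both $s^+$ and its conjugate lie in the closed upper region, which is only possible if $s^\pm$ lies in the syzygy plane — that is, $s^+$ is an Euler configuration — or $s^+ = s^- = L^+$. In either situation $s^-$ is obtained from $s^+$ by a reflection fixing the line of the configuration (when Euler) or $s^- = s^+$ (when Lagrange), so $\langle s^+_{jk}, s^-_{jk}\rangle \ge 0$ for all $1 \le j < k \le 3$, and the extra isosceles constraint $|x_3 - x_1| \ge |x_3 - x_2|$ is preserved under the deformation when we choose $\sg = s^+$. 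Proposition \ref{deform3}, applied to $\tilde{x}$ near $t=0$, then produces a nearby collision-free path with strictly smaller action on $[-T_0, T_0]$; restricting to $[0, T_0]$ and using that the deformation respects the symmetry (so the restricted path still lies in $\Om$), we get a contradiction with minimality of $x$.

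The case $t_0 = T_0$ is handled the same way, using that the boundary condition at $T_0$ (ending at the Euler configuration $E_3$ of arbitrary size and angle) is \emph{not} a pointwise symmetry but a free-endpoint condition; here instead one extends by the twist $H_3$ (reflection across the syzygy plane composed with reflection across $c(M_3)$), which is an action symmetry since $m_1 = m_2$, to a path on $[0, 2T_0]$ with $x(2T_0 - t) = H_3(x(t))$, making $x(T_0)$ an interior isolated triple collision; the limiting configurations again satisfy the angle condition of Proposition \ref{deform3} — the twist sends $s^+$ to $s^-$, and the nonnegativity of the inner products $\langle s^+_{jk}, s^-_{jk}\rangle$ is checked exactly as in the enumeration of cases in Lemma 4.1 (the twist is an isometry of the shape sphere fixing the relevant meridian). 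Again Proposition \ref{deform3} yields a strictly action-decreasing collision-free deformation, contradicting minimality.

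The main obstacle I anticipate is \emph{verifying that the deformed path still lies in $\Om$} — i.e.\ that the local deformation furnished by Proposition \ref{deform3} can be chosen to respect the reflection symmetry at $t=0$ (resp.\ the twist symmetry at $T_0$) so that after restricting back to $[0, T_0]$ the competitor is admissible, and that it continues to satisfy the normalization \eqref{iso}. This requires either observing that Proposition \ref{deform3} can be applied symmetrically (deforming $x|_{[-\dl,0]}$ and $x|_{[0,\dl]}$ by mirror-image deformations, which is automatic here because the two one-sided problems are reflections of each other and we choose $\sg$ invariant under the relevant reflection), or checking directly that the piecewise construction in the proof of Proposition \ref{deform3} commutes with the symmetry. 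A secondary technical point is confirming the claim that $s^\pm$ cannot be a configuration strictly below the syzygy plane: this follows from \eqref{iso} together with the continuity of $\pi \circ s$ up to the endpoint, but it should be stated carefully since $s(t)$ need not extend continuously through a triple collision in general — here the one-sided limits exist by Siegel's theorem, which is what makes the argument go through.
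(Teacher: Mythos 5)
There is a genuine gap, and it is exactly at the point you flag as your "main obstacle": admissibility of the deformed competitor. Your plan is to reflect (at $t=0$) or twist (at $t=T_0$) the minimizer so the triple collision becomes interior and then apply Proposition \ref{deform3} with $\sg=s^{+}$. But the deformation moves the endpoint configuration from $x(t_0)=0$ to $\ep_0\sg$, and after restricting back to $[0,T_0]$ this new endpoint must still satisfy the boundary conditions of Definition \ref{dfn:Om}: at $t=0$ a collinear configuration on the real axis with $x_1<x_2<x_3$, at $t=T_0$ an $E_3$ configuration. With $\sg=s^{+}$ this fails in general (e.g.\ if $s^{+}=L^{+}$ the new initial configuration is an equilateral triangle; if $s^{+}$ is an Euler configuration of the wrong type or at a non-real angle it is likewise inadmissible), so the cheaper path you produce is not in $\Om$ and yields no contradiction with minimality. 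The paper's proof is built around precisely this constraint: it applies the one-sided Proposition \ref{deform1} directly at the endpoint and chooses the deformation direction $\sg$ to be a normalized Euler central configuration that \emph{is} an admissible boundary configuration --- $E_2$ on the real axis at $t=0$, and $E_3$ (whose angle may be chosen freely, since the endpoint condition at $T_0$ allows arbitrary angle and size) at $t=T_0$ --- and then verifies $\langle \sg_{jk}, s^{\mp}_{jk}\rangle\ge 0$ against the possible limiting central configurations as in Figure \ref{8-2}. No extension across the endpoint is needed.

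Two further steps in your argument are also incorrect or unverified. First, for the reflected extension at $t=0$ one has $s^{-}=\overline{s^{+}}$, and for planar vectors $\langle z,\bar z\rangle$ equals $a^2-b^2$ for $z=(a,b)$, which can be negative; so the hypothesis of Proposition \ref{deform3} is not automatic, and your claim that \eqref{iso} forces $s^{+}$ to be Euler or $s^{+}=s^{-}=L^{+}$ is wrong --- the reflected branch lies on or \emph{below} the syzygy plane, so $s^{+}=L^{+}$, $s^{-}=L^{-}$ is perfectly consistent and is not excluded. Second, the extended path is only a minimizer against variations compatible with the reflection (resp.\ twist) symmetry, so even with a valid two-sided deformation you would still have to symmetrize it and then return to the admissibility question above; this is another reason the paper works one-sidedly with Proposition \ref{deform1} rather than through an extension.
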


\begin{proof}
	Assume $x(T_0)=0$ is a triple collision. Let $\sg$ be a normalized central configuration $E_3$ with its relative positions with respect to $s^-$ as indicated in the third picture of Figure \ref{8-2}. Then $\langle \sg_{jk}, s^-_{jk} \rangle \ge 0$, for any $1 \le j < k \le 3$. By Proposition \ref{deform1}, there is path $x^{\ep} \in \Om$ (a local deformation of $x$ near $t_0$) satisfying $A_L(x^{\ep},T_0)<A_L(x,T_0)$, which is absurd. Notice that for the $\sg$ we chosen, $\pi(x^{\ep}(T_0)) \in c(E_3)$. 

	For $t_0 =0$, everything is the same except to choose $\sg$ as a normalized central configuration $E_2$ this time.
\end{proof}

Now let's assume $x(t_0)$ is a binary collision or an isolated $\bk$-cluster collision with $\bk =\{1,2\}, \{2,3\}$ or $\{1,3\}$. Correspondingly we define $s(t)$ as a normalized centered $\bk$-configuration given by \eqref{s}. Recall that if $\bk= \{2,3\}$,
$$ x_0(t) = [m_2 x_2(t) + m_3 x_3(t)]/m_0, \;\; m_0 = m_2 +m_3;$$
$$ q(t) = (q_2(t), q_3(t)) = (x_2(t)-x_0(t), x_3(t)-x_0(t));$$
$$\ik(t) = m_2|q_2(t)|^2 + m_3 |q_3(t)|^2.$$

By Proposition \ref{2cc}, $s(t) = \ik^{-\ey}(t)q(t)$ converges to $s^{\pm}$, a normalized central configuration of the $\bk$-body problem, as $t$ converges to $t^{\pm}_0$. 

\begin{lm}
	\label{lm3} If $t_0 \in (0,T_0)$, then $x(t_0)$ can't be a binary collision. 
\end{lm}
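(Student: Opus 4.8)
The plan is to rule out an isolated binary collision at an interior time $t_0 \in (0,T_0)$ by applying the strengthened interior deformation result, namely the two-sided analogue of Proposition \ref{deform21}. Recall that Proposition \ref{deform3} already handles interior collisions under the restrictive cone condition $\langle \sg_{jk}, s^{\pm}_{jk}\rangle \ge 0$, but for a binary collision the discussion just before Lemma \ref{deform22} (and Remark \ref{rem: Gordon}) tells us this cone can be widened to a full half-turn: the admissible directions $\sg$ are those whose angle $\phi_3$ satisfies $|\phi_3 - \tht_3^{\pm}| \le \pi$ (strict for $\al = 1$) relative to \emph{both} incoming and outgoing asymptotic directions $s^{\pm}$. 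So the first step is to state and use the interior binary-collision deformation: piecing together the one-sided result of Proposition \ref{deform21} applied to $x|_{[t_0-\dl, t_0]}$ (time reversed) and $x|_{[t_0, t_0+\dl]}$, exactly as Proposition \ref{deform3} is obtained from Proposition \ref{deform1}, one gets a collision-free competitor $\xe$ with strictly smaller action provided a single direction $\sg$ can be found compatible with \eqref{eqn:binary} against both $s^+$ and $s^-$.

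The second step is the geometric heart: showing such a $\sg$ always exists for the three-body problem with $m_1 = m_2$ under the standing symmetry normalization \eqref{iso}. For a binary collision in the planar 3-body problem, $s^{\pm}$ are each a normalized collinear 2-configuration, so they are determined by a single angle $\tht_3^{\pm} \bmod 2\pi$ (the direction of $q_3$ at the collision time, from the two sides). We need an angle $\phi_3$ with $|\phi_3 - \tht_3^+| \le \pi$ and $|\phi_3 - \tht_3^-| \le \pi$ simultaneously. Two arcs of angular radius $\pi$ centered at $\tht_3^+$ and $\tht_3^-$ on the circle always intersect (their complements are open arcs of length $< 2\pi$ summing to $< 2\pi$ only if the centers are antipodal, in which case the boundary points still match), so for $\al \in (1,2)$ such a $\phi_3$ always exists — in fact $\phi_3 = \tfrac12(\tht_3^+ + \tht_3^-)$ works. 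For $\al = 1$ the inequalities must be strict; the bad case is exactly $\tht_3^- = \tht_3^+ + \pi$, i.e. the collision "bounces back" along the antipodal ray. I would rule this out by invoking the symmetry/geometry of the reduced problem: the collision occurs at an interior time, so $\pi(x)$ is a genuine interior point of a minimizing arc in the shape space, and a Kepler-type bounce whose ejection direction is antipodal to the collision direction would force $\pi(x(t))$ to have a corner (the velocity of the Kepler ejection reverses), contradicting that $x$ is a smooth solution of \eqref{N body} away from the collision set combined with the minimality — more carefully, one checks that a binary collision with $s^- = -s^+$ is a brake-type ejection-collision whose continuation, in the inertial plane, is again forced to collide, which is incompatible with $x(t)$ being collision-free on $(t_0, T_0]$ unless we are at an endpoint. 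Alternatively one simply notes $\phi_3 = \tht_3^+ + \pi/2$ gives strict inequalities whenever $\tht_3^- \ne \tht_3^+ + \pi$ and handles the antipodal subcase by this endpoint/continuation argument.

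Putting it together: assume for contradiction $x(t_0)$, $t_0 \in (0,T_0)$, is a binary collision with $\bk \in \{\{1,2\},\{2,3\},\{1,3\}\}$. By Proposition \ref{2cc} the incoming/outgoing normalized shapes converge to central configurations $s^{\pm}$ of the binary, with $q_3$-angles $\tht_3^{\pm}$. Choose $\phi_3$ as above (the bisector for $\al \in (1,2)$; the bisector or a $\pi/2$-offset for $\al = 1$, with the antipodal case excluded) and let $\sg = (\rb_2 e^{i\phi_2}, \rb_3 e^{i\phi_3})$, $\phi_2 = \phi_3 + \pi$, be the corresponding normalized centered $\bk$-configuration extended to a full configuration by leaving the third body fixed. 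Condition \eqref{eqn:binary} holds for both $s^+$ and $s^-$, so the two-sided version of Proposition \ref{deform21} yields a path $\xe$ agreeing with $x$ outside a small neighbourhood of $t_0$, collision-free near $t_0$, and with $A_L(\xe, T_0) < A_L(x, T_0)$. One must check $\xe \in \Om$: the deformation is supported in a small interval around $t_0 \in (0,T_0)$ and does not move the endpoints $x(0), x(T_0)$, so the boundary constraints defining $\omh$ and hence $\Om$ are preserved, and the symmetry normalization \eqref{iso} can be restored by folding back across the syzygy and isosceles planes without increasing the action. This contradicts minimality of $x$, proving the lemma. The main obstacle I anticipate is the $\al = 1$ antipodal case $\tht_3^- = \tht_3^+ + \pi$: making rigorous that such a configuration cannot occur for an interior collision of a minimizer (as opposed to simply being a direction we are not allowed to deform along) is where the real work lies, and it is presumably why the Newtonian case needs the extra hypothesis \eqref{assume} only at the endpoint $t_0 = 0$ — at interior times the bounce geometry should still save us.
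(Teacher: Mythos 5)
Your overall deformation strategy is workable, but it diverges from the paper's proof and contains one genuine confusion. The paper does not invoke Proposition \ref{deform21} (nor any two-sided binary refinement) at interior times: it observes that, because of the normalization \eqref{iso}, after placing the collinear configuration $x(t_0)$ on the real axis both asymptotic angles satisfy $\tht_3^{\pm}\in[0,\pi]$ (an ejection or incidence angle in $(\pi,2\pi)$ would push $\pi(x(t))$ below the syzygy plane), hence $|\tht_3^+-\tht_3^-|\le\pi$ and the bisector $\phi_3=\frac{1}{2}(\tht_3^++\tht_3^-)$ satisfies the cone condition $\langle \sg_{23},s^{\pm}_{23}\rangle\ge 0$; Proposition \ref{deform3} then applies for every $\al\in[1,2)$, with no case distinction in $\al$ and no need for the binary improvement, which the paper saves for $t_0=0$ in Lemma \ref{lm4}. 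Your route --- gluing two copies of Proposition \ref{deform21} the way Proposition \ref{deform3} is glued from Proposition \ref{deform1} --- is admissible in spirit (the paper remarks such a two-sided statement can be obtained but is not needed), though you should at least address matching the deformation size $\ep_0$ on the two sides, exactly as in the proof of Proposition \ref{deform3}.

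The concrete gap is your treatment of $\al=1$. Condition \eqref{eqn:binary}, read as an angular distance, excludes only the direction exactly antipodal to the asymptotic direction, so at most two directions on the circle are forbidden (the antipodes of $\tht_3^+$ and of $\tht_3^-$) and an admissible $\phi_3$ always exists: in the case you call ``bad'', $\tht_3^-=\tht_3^++\pi$, the choice $\phi_3=\tht_3^++\pi/2$ gives $|\phi_3-\tht_3^{\pm}|=\pi/2<\pi$ on both sides. Your parenthetical about arcs of angular radius $\pi$ (each such arc is the whole circle; its complement is a single point, not an open arc) is where the miscount occurs. Consequently the ``bounce''/continuation argument you lean on for the antipodal subcase --- which you yourself flag as the unfinished ``real work'' --- is both unnecessary and, as written, not a proof; as the proposal stands, that admitted gap is what keeps it from being complete. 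Deleting it and replacing it with the elementary angle count above closes your argument, or, more simply, follow the paper and avoid Proposition \ref{deform21} altogether at interior times via \eqref{iso} and Proposition \ref{deform3}.
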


\begin{proof}
	We will give details for the case: $\bk = \{2,3\}$, while the others are similar.
	Choose a $\dl>0$ small enough such that $x(t_0)$ is the only collision in $[t_0 -\dl, t_0+\dl]$. In polar coordinates, set
	$$ q(t) = (q_2(t), q_3(t)) = (\rho_2(t) e^{i\tht_2(t)}, \rho_3(t) e^{i \tht_3(t)}).$$
	By Proposition \ref{2cc}, there are $\tht^{\pm}_3 \in [0, 2\pi)$ and $\tht_2^{\pm} = \tht^{\pm}_3 +\pi$, such that 
	$$\lim_{t \to t_0^{\pm}} \tht_j(t) = \tht_j^{\pm}, \;\; \forall j \in \{2,3\}.$$
	As a result $s^{\pm}= (s^{\pm}_2, s^{\pm}_3)= (\rb_2 \tht_2^{\pm}, \rb_3 \tht_3^{\pm})$, where $\rb_2, \rb_3$ are defined in \eqref{rhop}. 

	Since $x(t_0)$ is a collinear configuration, let's assume all the three masses lying on the real axis at the moment $t_0$ with $m_1$ on the negative direction. We claim 
	$$\tht^{\pm}_3 \in [0, \pi], \;\; \tht_2^{\pm} = \tht_3^{\pm}+\pi \in [\pi, 2\pi]. $$ 
	Because if $\tht^+_3 \in (\pi, 2\pi)$, then for $t-t_0>0$ small enough, $x(t)$ is not collinear and the triangle formulated by the masses will have the same orientation as $L^-$. This means $\pi(x(t))$ is below the syzygy plane, which violates \eqref{iso}. The argument for $\tht^-_3$ is the same.
	          
	Let $\sg =(\sg_2, \sg_3) = (\rb_2 e^{\phi_2}, \rb_3 e^{\phi_3})$ be a normalized centered $\bk$-configuration with 
        \begin{equation*}
         \phi_2 = \ey (\tht^-_2 + \tht^+_2); \;\; \phi_3 = \ey ( \tht^-_3 + \tht^+_3).
        \end{equation*}
        Because $|\tht^+_j-\tht^-_j| \le \pi$ for any $j \in \{2,3\}$, we have $\langle s^{\pm}_{23}, \sg_{23} \rangle \ge 0 $. Then by Proposition \ref{deform3}, we can get a new path in $\Om$ with action value strictly smaller than $x$'s and get a contradiction (see picture $(a)$ in Figure \ref{8-5}). 
	\begin{figure}
		\centering
		\includegraphics[scale=0.8]{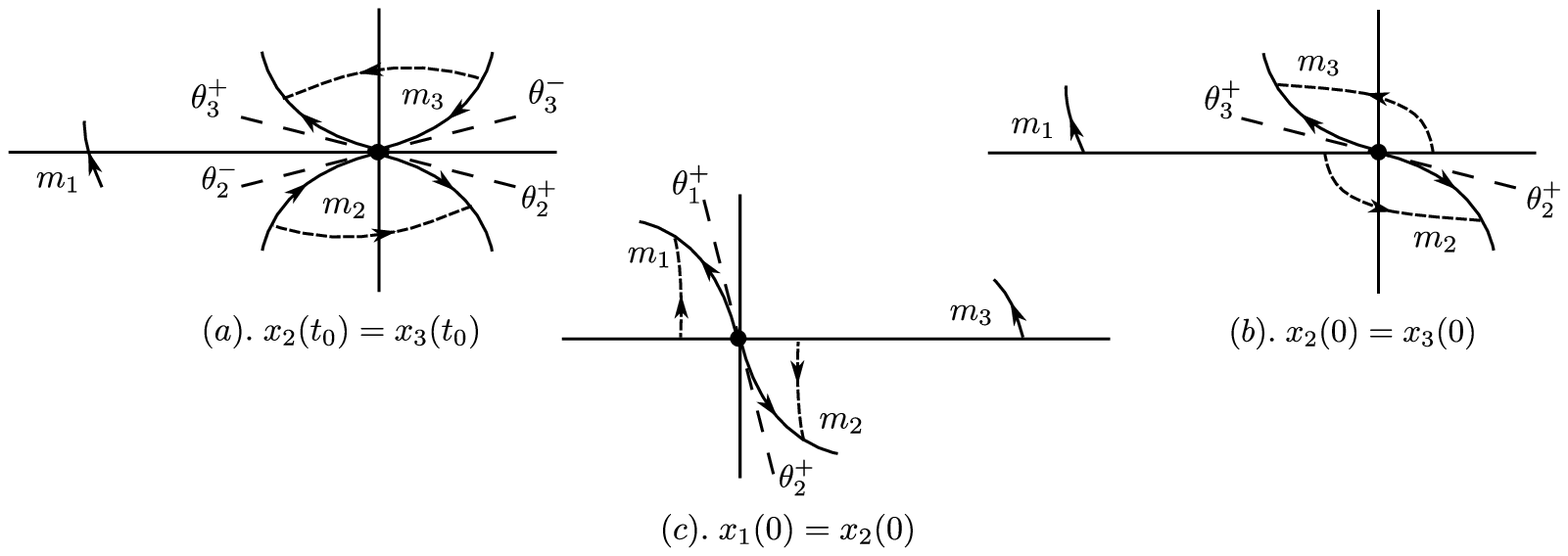}
		\caption{}
		\label{8-5}
	\end{figure}         
\end{proof}

\begin{lm} \label{lm4}
 If $t_0 \in \{ 0, T_0\}$, $x(t_0)$ can't be a binary collision. 
\end{lm}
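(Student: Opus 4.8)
\textbf{Proof proposal for Lemma \ref{lm4}.} The plan is to treat the two boundary instants separately: $t_0=T_0$ turns out to be vacuous, and $t_0=0$ is handled by the sharp binary‑collision deformation of Proposition \ref{deform21}. For $t_0=T_0$: since $\Om$ is the weak $H^1$‑closure of $\omh$ and weak convergence in $H^1([0,T_0],\cx)$ forces uniform convergence, $x(T_0)$ lies in the closure inside $\cx$ of the set of Euler configurations $E_3$ of arbitrary size and angle, namely the $2$‑dimensional cone over $SO(2)\cdot E_3$; this closure consists of genuine $E_3$ configurations together with the total collision at the origin, and so contains no binary‑collision configuration (and a total collision at $T_0$ is already excluded by Lemma \ref{lm2}). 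Hence $x(T_0)$ cannot be a binary collision.

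For $t_0=0$, the same uniform‑convergence remark shows $x(0)$ is a collinear configuration on the real axis with $x_1(0)\le x_2(0)\le x_3(0)$; if it is a binary collision it must be a $b_{12}$ collision ($x_1(0)=x_2(0)<x_3(0)$) or a $b_{23}$ collision ($x_1(0)<x_2(0)=x_3(0)$), since $x_1(0)=x_3(0)$ would force a total collision. By Lemmas \ref{lm1} and \ref{lm3} there is no collision on $(0,T_0)$ and by Lemma \ref{lm2} no total collision at the endpoints, so on a short interval $[0,\dl]$ the minimizer $x$ is an isolated $\bk$‑cluster collision solution with $\bk=\{2,3\}$ or $\bk=\{1,2\}$ (its energy being conserved through the collision, as always for a minimizer), and Proposition \ref{deform21} is available.

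I would then exhibit an admissible deformation direction. Writing a normalized centered $\bk$‑configuration as $\sg=(\rb_k e^{i\phi_k})_{k\in\bk}$ with the two angles differing by $\pi$ (as in Section \ref{bicoll}): for $\bk=\{2,3\}$ take $\phi_3=0$, i.e.\ the real vector $\sg$ with $\sg_3>0>\sg_2$; for $\bk=\{1,2\}$ take the real vector with $\sg_1<0<\sg_2$. In both cases, for $\ep_0>0$ small the configuration $y(0)=x(0)+\ep_0\sg$ is still collinear on the real axis, the equality at the collision is resolved while the remaining strict inequality is preserved, so $y_1(0)<y_2(0)<y_3(0)$, i.e.\ $y(0)$ is an admissible starting configuration for $\omh$. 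The angular hypothesis \eqref{eqn:binary}, which here reads $|\phi_3-\tht_3^+|\le\pi$ (resp.\ the analogous inequality for $\bk=\{1,2\}$), is automatic for $\al\in(1,2)$ once the asymptotic angle is chosen in $[-\pi,\pi]$; note that the isosceles/syzygy normalization \eqref{iso} is not even needed for this. Proposition \ref{deform21} then gives $y\in H^1([0,\dl],\cx)$ with $y=x$ on $[\dl_0,\dl]$, collision‑free on $(0,\dl]$, $y(0)=x(0)+\ep_0\sg$, and $A_L(y,\dl)<A_L(x,\dl)$. Extending $y$ by $x$ on $[\dl,T_0]$ — where $x$ is collision‑free, its only putative collision being the one at $t=0$ — produces an element of $\omh\subset\Om$ with strictly smaller action than $x$, contradicting minimality. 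Together with the $t_0=T_0$ case this proves the lemma.

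The main obstacle is precisely the choice of $\sg$: because the collision sits at the boundary $t=0$, the vector $\sg$ must be real and of a prescribed sign so that $y(0)$ remains a type‑$2$ syzygy inside $\Om$, hence it is essentially forced and cannot be optimized. With this forced $\sg$ the cone condition \eqref{eqn:dfm dir} underlying the cruder Proposition \ref{deform1} may fail, since that condition requires $\cos\tht_3^+\ge 0$ while here $\tht_3^+$ can be any angle; the argument therefore genuinely depends on the stronger binary‑collision estimate of Proposition \ref{deform21}, which for $\al\in(1,2)$ permits every deformation direction. The remaining points (that a $b_{13}$ collision cannot occur at $t=0$, that $t_0=T_0$ is vacuous, and the elementary checks on the ordering of $y(0)$ and on $x$ being collision‑free away from $t=0$) are routine.
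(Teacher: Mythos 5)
Your proof is correct and follows the paper's overall strategy: the case $t_0=T_0$ is vacuous because the boundary condition of $\Om$ only allows an $E_3$ shape or the total collision there (the latter excluded by Lemma \ref{lm2}), and a binary collision at $t=0$ — necessarily $b_{12}$ or $b_{23}$ — is removed by a local deformation along a real direction chosen so that the deformed starting configuration is again an admissible type-$2$ syzygy, yielding a path in $\Om$ of strictly smaller action. The one genuine divergence is the $b_{12}$ sub-case: the paper applies the cruder Proposition \ref{deform1}, verifying the cone condition \eqref{eqn:dfm dir} through the normalization \eqref{iso} (the $c(M_3)$ constraint pins $\tht_1^+\in[\pi/2,\pi]$, so with $\tilde{\phi}_1=\pi$ one gets $\langle\tilde{\sg}_{12},s^+_{12}\rangle\ge 0$), whereas you invoke the sharper binary-collision Proposition \ref{deform21} in both sub-cases, correctly observing that for $\al\in(1,2)$ its hypothesis \eqref{eqn:binary} is vacuous, so \eqref{iso} is not needed at this step. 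Both routes are valid for the weak-force setting of Section \ref{weak}. What the paper's choice buys is reusability in Section \ref{sec newton}: since Proposition \ref{deform1} holds for all $\al\in[1,2)$, the $b_{12}$ exclusion transfers verbatim to $\al=1$, so the only place where \eqref{eqn:binary} can fail (at $|\phi_3-\tht_3^+|=\pi$, producing the Schubart alternative in Lemma \ref{lm: b23}) is the single $b_{23}$ case. With your all-\ref{deform21} argument, the Newtonian extension would need an extra remark — e.g.\ that the $c(M_3)$ constraint in \eqref{iso} gives the strict inequality $|\tilde{\phi}_1-\tht_1^+|\le\pi/2<\pi$ for the $b_{12}$ case — before the bookkeeping of Section \ref{sec newton} goes through as written.
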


\begin{proof}
 When $t_0 =T_0$, by the definition of $\Om$, the only possible collision is a triple collision. The result holds automatically.
 
 Assume $x(0)$ is a binary collision, by the definition of $\Om$, it is either $b_{12}$ or $b_{23}$.  
 
 First, let's say $x(0)$ is a $b_{23}$ binary collision, so it is an isolated $\bk$-cluster collision with $\bk = \{2,3 \}.$ Following the same notations and argument used in the proof of the previous lemma, we may assume all the three masses are on the real axis at $t=0$ with $x_1(0) < 0 < x_2(0) = x_3(0)$ and   
 \begin{equation} \label{in1}
 \tht^+_3 \in [0, \pi], \; \tht^+_2 = \tht_3^+ +\pi \in [\pi, 2\pi].
 \end{equation}
 Let $\sg = (\sg_2, \sg_3) = (\rb_2 e^{i \phi_2}, \rb_3 e^{i \phi_3})$ with $ \phi_3 = 0, \phi_2 = \pi.$ Then $ 0 \le \tht^+_3 - \phi_3 \le \pi$. 

 As a result, we can use Proposition \ref{deform21} to get a new path $y \in \Om$ with $A_L(y,T_0)<A_L(x,T_0)$ (see picture $(b)$ in Figure \ref{8-5}) and this is a contradiction. This is shows $x(0)$ can't be a $b_{23}$ binary collision.  
 
 Now let's assume $x(0)$ is a $b_{12}$ binary collision, so $x(0)$ is an isolated $\bk$-cluster collision with $\bk = \{1,2\}.$ Like before we assume all the three masses are in the real axis at $t=0$ with $x_1(0)=x_2(0) <0 < x_3(0)$. 

 Now we let $q_j(t)$, $j=1,2$, represents relative position of $m_j$ with respect to the center of mass of $m_1$ and $m_2$ and in polar coordinates: $q_j(t) =\rho_j(t) e^{i \tht_j(t)}$. By Proposition \ref{2cc}, there exist $\tht_1^+ \in [0, 2\pi)$ and $\tht^+_2=\tht_1^+ +\pi$ satisfying 
 $$\lim_{t \to 0^+} \tht_j(t) = \tht_j^+, \;\; \forall j \in \{1,2 \}. $$
 By our assumption, $\pi(x(t))$ is never below the syzygy plane. Similar argument as before shows $\tht_1^+ \in [0, \pi]$. Meanwhile the assumption that $\pi(x(t))$ never crosses $c(M_3)$ (the second inequality in \eqref{iso}) further implies $\tht_1^+ \in [\pi/2, \pi]$. Then $\tht_2^+ \in [3\pi/2, 2\pi]$. 

 Let $\tilde{\sg} = (\tilde{\sg}_1, \tilde{\sg}_2) = (\tilde{\rho}_1 e^{i \tilde{\phi}_1}, \tilde{\rho}_2 e^{1 \tilde{\phi}_2})$ be a normalized centered $\bk$-configuration with $\tilde{\phi}_1 = \pi$ and $\tilde{\phi}_2 =2\pi$. Then by Proposition \ref{deform1}, we can get a new path $x^{\ep} \in \Om$ with $A_L(x^{\ep}, T_0) < A_L(x, T_0)$ (see picture $(c)$ in Figure \ref{8-5}), which is absurd. This shows $x(0)$ can not be a $b_{12}$ collision either.   
 \end{proof}

Up to now we have proved as a minimizer $x$ must be collision-free and this finishes our proof of Proposition \ref{prop}.


\section{Proof of Proposition \ref{prop 2}} \label{sec newton}
The existence of an action minimizer in $\Om$ can be proved just like before. Let $x \in \Om$ be such a minimizer. Like in Section \ref{weak}, we assume it satisfies \eqref{iso}. 

Recall that Proposition \ref{deform21} was used only once in Section \ref{weak} to prove $x(0)$ is free of $b_{23}$ binary collision. In the other cases we used Proposition \ref{deform1} and \ref{deform3}, whose results hold for any $\al \in [1,2)$, so those results will still hold for $\al =1$. Therefore the only possible collision is a $b_{23}$ binary collision at $t =0$. For the rest we assume $\bk=\{2,3\}.$ 

\begin{lm} \label{lm: b23}
If $x(0)$ has a $b_{23}$ binary collision, then $x(t) \in \rr^3, \forall t \in [0,T_0]$ and its a quarter of a Schubart solution. 
\end{lm}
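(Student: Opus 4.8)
The plan is to first use the sharp binary-collision deformation result (Proposition \ref{deform21}) to fix the asymptotic shape of the collision, then to combine this with the vanishing of the angular momentum to force the whole minimizer to be collinear, and finally to invoke Venturelli's theorem (Proposition \ref{Venturelli}) to identify it as a quarter of a Schubart solution. So let $x\in\Om$ be the given minimizer carrying a $b_{23}$ collision at $t=0$; as in the proof of Lemma \ref{lm4} normalize it so that the three masses lie on the real axis at $t=0$ with $x_1(0)<0<x_2(0)=x_3(0)$, and write $q_3(t)=\rho_3(t)e^{i\tht_3(t)}$ as in \eqref{polar}. Condition \eqref{iso} gives $\tht_3^+\in[0,\pi]$. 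To keep a deformed path inside $\Om$ we are forced to take the deformation direction $\sg=(\rb_2 e^{i\phi_2},\rb_3 e^{i\phi_3})$ with $\phi_3=0$, $\phi_2=\pi$, for which $|\phi_3-\tht_3^+|=\tht_3^+\le\pi$; hence for $\al=1$ Proposition \ref{deform21} produces a strictly action-decreasing path in $\Om$, contradicting minimality, \emph{unless} the strict inequality fails, i.e. unless $\tht_3^+=\pi$. So $\tht_3^+=\pi$ and $\tht_2^+=0$: the limiting normalized shape $s^+$ is the collinear central configuration, and using \eqref{rho}, Proposition \ref{sundman} and $x_0(t)\to x_0(0)>0>x_1(0)$, for small $t>0$ one reads off the ordering $x_1(t)<x_3(t)<x_0(t)<x_2(t)$ with $x_3$ decreasing and $x_1,x_2$ increasing near $t=0$.

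The heart of the proof is then showing $x(t)\in\rr^3$ for all $t\in[0,T_0]$. Because the endpoint at $t=T_0$ is an Euler configuration $E_3$ of arbitrary angle, the minimizer has zero total angular momentum $J\equiv0$ on $(0,T_0]$. Letting $t\to0^+$ and using that the relative angular momentum of the colliding pair satisfies $\jk(t)\to0$ (proof of Proposition \ref{2cc}), hence the internal angular momentum of the pair vanishes in the limit, we get $m_1\,x_1(0)\wedge\dot x_1(0)+m_0\,x_0(0)\wedge\dot x_0(0)=0$; since $m_1x_1(0)+m_0x_0(0)=0$, this collapses to $x_1(0)\wedge\dot x_1(0)=0$, and as $x_1(0)$ is real and nonzero it forces $\dot x_1(0)$, hence also $\dot x_0(0)=-(m_1/m_0)\dot x_1(0)$, to be real. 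Together with the facts that the internal motion of the colliding pair is asymptotically parabolic (zero internal energy), has vanishing internal angular momentum, and approaches the collision along the real axis ($\tht_3^+=\pi$), this says precisely that the regularized state of $x$ at $t=0$ lies in the submanifold of collinear real-axis states, which is invariant under the flow; hence $x(t)\in\rr^3$ for all $t\in[0,T_0]$. (Equivalently: the complex-conjugate path $\bar x$ is again a minimizer in $\Om$ sharing the same collision data, so this rigidity gives $x=\bar x$. Note that simply passing to $\mathrm{Re}\,x\in\Om\cap H^1([0,T_0],\rr^3)$ does not work, as that strictly lowers the kinetic energy but strictly raises the potential.)

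Once $x\in\Om\cap H^1([0,T_0],\rr^3)$, it minimizes $A_L$ over this collinear class as well, so by Proposition \ref{Venturelli} it is a quarter of a Schubart solution; the individual conditions of Definition \ref{Schubart} then follow from the first step (the collision configuration and the monotone ordering on $(0,T_0)$, the latter since there is no interior collision and the collinear three-body flow preserves the ordering between collisions), from the free-endpoint condition at $t=T_0$ (forcing $x_3(T_0)=0$, $x_1(T_0)=-x_2(T_0)$), and from the symmetric extension used in Section \ref{shape}.

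The expected main obstacle is the collinearity step: converting the vanishing of the angular momentum together with the collinear collision asymptotics into genuine rigidity, i.e. making it precise that the regularized solution is pinned to the collinear invariant manifold, so that a minimizer carrying a $b_{23}$ collision truly cannot leak into the non-collinear part of configuration space. The other two steps are essentially bookkeeping built on Section \ref{bicoll} and Proposition \ref{Venturelli}.
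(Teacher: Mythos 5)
Your overall route is the paper's: you force $\tht_3^+=\pi$ exactly as in Lemma \ref{lm4} via Proposition \ref{deform21} with $\phi_3=0$, $\phi_2=\pi$; you use the vanishing of the total angular momentum together with $\lim_{t\to0^+}\jk(t)=0$ and $m_0x_0+m_1x_1\equiv 0$ to conclude that $\dot x_1(0)$, hence $\dot x_0(0)$, is real; and you finish with Proposition \ref{Venturelli}. The genuine gap is the step you yourself label the ``main obstacle'': the passage from this real limiting data to $x(t)\in\rr^3$ on all of $[0,T_0]$. You assert that the ``regularized state at $t=0$ lies in the collinear invariant submanifold'' and, equivalently, that $\bar x$ ``shares the same collision data'' so that rigidity gives $x=\bar x$, but no uniqueness-through-collision statement for a regularized flow is proved or cited, and no control of the velocities of the colliding bodies themselves is given. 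This is precisely where the paper does its real work: it proves the claim \eqref{eqn:v}, i.e. $\lim_{t\to0^+}\dot v_2(t)=\lim_{t\to0^+}\dot v_3(t)=0$, by writing $\dot v_3=\dot v_0+\dot\rho_3\sin\tht_3+\rho_3\dot\tht_3\cos\tht_3$ and combining Sundman's estimates (Proposition \ref{sundman}: $\rho_3\sim C_1t^{2/3}$, $\dot\rho_3\sim C_2t^{-1/3}$) with $\dot\tht_3\to0$ from Proposition \ref{2cc} and, crucially, with $\tht_3^+=\pi$, so that $|\sin\tht_3(t)|$ decays fast enough to beat the $t^{-1/3}$ blow-up of $\dot\rho_3$; only once the full limiting velocity data is shown to be real does it conclude $x(t)\in\rr^3$ and then invoke Proposition \ref{Venturelli} together with minimality.

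In short, the first and last steps of your plan are correct and coincide with the paper's, but the collinearity step is not a known fact you may quote: it is the content of the lemma, and your proposal leaves it as an acknowledged placeholder. If you wish to bypass the paper's asymptotic computation, you would have to actually set up a Levi--Civita-type regularization of the $b_{23}$ collision for $\al=1$ and prove that the continuation through collision is uniquely determined by data you have verified to be real (position and velocity of $m_1$ and of $x_0$, the collision point, the approach direction $\tht_3^+=\pi$, the internal energy and the vanishing relative angular momentum); as written, exactly that statement is missing.
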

\begin{proof}
We can still use the same notations and assumptions set up in the first half of the proof of Lemma \ref{lm4}. By condition \eqref{eqn:binary} in Proposition \ref{deform21}, $x(0)$ can have a $b_{23}$ binary collision, only when $\tht_3^+ = \pi$. As otherwise, Proposition \ref{deform21} can still be used like in the proof of Lemma \ref{lm4} to reach a contradiction, even when $\al=1$. Let $x_0(t)$ is the center of mass of $m_2$ and $m_3$. Set  
$$ x_j(t) = u_j(t) + i v_j(t), \;\; \text{ for } j \in \{0, 1,2,3 \} \;\; \text{ with } u_j(t), v_j(t) \in \rr. $$
We claim 
\begin{equation}
 \label{eqn:v} \dot{v}_1(0) = \lim_{t \to 0^+} \dot{v}_2 (t) = \lim_{t \to 0^+} \dot{v}_3(t) = 0.
 \end{equation} 
Let $m_0 = m_2 +m_3$, then
\begin{equation}
\label{eqn:x0} m_0x_0(t) +m_1x(t)= 0, \;\; \forall t \in (0,T_0).
\end{equation}
As an action minimizer, the angular momentum of $x$ vanishes:
$$ J(x(t)) = \sum_{j=1}^3 m_j x_j(t) \wedge \xd_j(t)=0, \;\; \forall t \in (0, T_0).$$ Rewrite the angular moment as following 
\begin{equation*}
J(x) = \sum_{j=0}^{1} m_j x_j \wedge \xd_j + \sum_{k=2}^{3}m_k(x_k -x_0)\wedge (\xd_k -\xd_0) =0.
 \end{equation*} 
In the proof of Proposition \ref{2cc}, we showed $\lim_{t \to 0^+} \jk(t) =0,$ where
\begin{equation*}
 \jk(t) = (x_3(t) -x_2(t)) \wedge (\xd_3(t) -\xd_2(t)).
\end{equation*}
A simple computation shows 
\begin{equation*}
\frac{m_2 m_3}{m_0}J_{\bk}(t) = \sum_{k=2}^{3}m_k(x_k(t) -x_0(t))\wedge (\xd_k(t) -\xd_0(t)).
\end{equation*} 
Combining the above results, we get
\begin{equation*}
\label{l3} \lim_{t \to 0^+}  m_0 x_0(t) \wedge \dx_0(t) + m_1 x_1(t) \wedge \dx_1(t) =0.
\end{equation*} 
Together with \eqref{eqn:x0}, they imply 
$$ \lim_{t \to 0^+} x_1(t) \wedge \dx_1(t) = \lim_{t \to 0^+}[u_1(t) \dot{v}_1(t) - v_1(t) \dot{u}_1(t)] =0. $$
Because $m_1$ is on the negative axis at $t=0$ without involving in the collision,
$$ u_1(0) < 0, \; v_1(0)=0, \; |\dot{u}_1(0)| < +\infty. $$
This means the following must hold
$$ \dot{v}_1(0) = \lim_{t \to 0^+} \dot{v}_1(t) =0.$$
\eqref{eqn:x0} also implies $m_0 \dot{v}_0(t) +m_1 \dot{v}_1(t) =0$. Therefore 
$$ \dot{v}_0(0) = \lim_{t \to 0^+} \dot{v}_0(t) = \lim_{t \to 0^+} m_1\dot{v}_1(t)/m_0= 0. $$

To estimate $\dot{v}_2(t)$ and $\dot{v}_3(t)$, it is better to use polar coordinates
$$ x_j - x_0 = (u_j -u_0) + i (v_j -v_0)= \rho_j e^{i\tht_j}, \;\; \forall j \in \{2, 3\}. $$
Then 
$$ \dot{v}_3 = \dot{v}_0 + \dot{\rho}_3 \sin \tht_3 + \rho_3 \dot{\tht}_3 \cos \tht_3. $$
By Proposition \ref{sundman}
$$ \rho_3(t) \sim C_1 t^{\frac{2}{3}}, \; \dot{\rho}_3(t) \sim C_2 t^{-\frac{1}{3}}.$$
Meanwhile Proposition \ref{2cc} implies $\lim_{t \to 0^+} \dot{\tht}_3(t) = 0$. Therefore 
$$ \lim_{t \to 0^+} \rho_3(t) \dot{\tht}_3(t) \cos (\tht_3(t)) = 0 $$
Recall that $\tht^+_3=\lim_{t \to 0^+} \tht_3(t) = \pi$, by Taylor expansion, 
$$ | \dot{\rho}_3(t) \sin(\tht_3(t))| \le C_3 t^{\frac{2}{3}}, \text{ for } t>0 \text{ small enough}. $$
As a result, $\lim_{t \to 0^+} \dot{v}_3(t) = 0.$ Similarly we can show $\lim_{t \to 0^+} \dot{v}_2 (t) = 0.$ This establishes \eqref{eqn:v}. Since $x(0) \in \rr^3$, this means $x(t) \in \rr^3$ for any $t \in [0,T_0]$. The rest follows from Proposition \ref{Venturelli} and the fact that $x$ is an action minimizer in $\Om$.
\end{proof}

Proposition \ref{prop 2} follows directly from Lemma \ref{lm: b23}. 

\section{Appendix} \label{sec:app}
A proof of Lemma \ref{deform22} will be given here. Recall that $\bk=\{2,3\}$, so after blow up, we are dealing with the planar $2$-body problem. It is equivalent to the \emph{Kepler-type problem} or the \emph{$1$-center problem}, which describes the motion of a massless particle in a plane under the attraction of mass $M$ fixed at the origin. The position function of the massless particle satisfies the following equation
\begin{equation} \label{kepler problem}
     \ddot{\gm}(t)= \nabla V(\gm(t)), \;\;  V(\gm(t)) := \frac{M}{\al |\gm(t)|^{\al}}.
\end{equation}
It is the Euler-Lagrange equation of the following action functional
$$A_{\lb}(\gm, [T_1, T_2]) = \int_{T_1}^{T_2} \lb(\gm, \dot{\gm})\,dt; \;\; \lb(\gm, \dot{\gm})= \ey |\dot{\gm}|^2 +V(\gm).$$
For simplicity, let $A_{\lb}(\gm, T)= A_{\lb}(\gm, [0,T]),$ for any $T>0$. 

Given an arbitrary pair of angles $\psi^{\pm}$, there is a corresponding \emph{parabolic collision-ejection solution} $\gb:\rr \to \cc$ defined as following:
\begin{equation*}
\gb(t) = 
\begin{cases}
\big( \frac{2+\al}{\sqrt{2\al}} \sqrt{M}|t|\big)^{\pw} e^{i \psi^-} & \text{ if } t \le 0, \\
\big(\frac{2+\al}{\sqrt{2\al}} \sqrt{M}|t| \big)^{\pw} e^{i \psi^+} & \text{ if } t > 0.
\end{cases}
\end{equation*}
A straight forward computation shows, $\gb(t)$ satisfies \eqref{kepler problem} with zero energy, for any $t \ne 0$.

\begin{prop}
	\label{coll1} If the following condition holds
	$$ \begin{cases}
	|\psi^+ - \psi^-| \le 2 \pi, & \text{ when } \al \in (1, 2); \\
	|\psi^+ - \psi^-| < 2 \pi, & \text{ when } \al =1, 	
	\end{cases} $$
	then there is a solution of equation \eqref{kepler problem}, $\gm \in C^2([-T, T], \cc \setminus \{0 \})$, satisfying 
	\begin{enumerate}
	\item[(a).] $\gm(\pm T) = \gb(\pm T), \;\; \text{Arg}(\gm(\pm T)) = \psi^{\pm};$
	\item[(b).] $A_{\lb}(\gm, [-T, T]) < A_{\lb}(\gb, [-T, T]).$
	\end{enumerate}	
\end{prop}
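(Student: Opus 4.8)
The plan is to build $\gamma$ as an action minimizer in the appropriate path class, the substance being to show that such a minimizer is collision-free — hence a $C^2$ solution of \eqref{kepler problem} — and that it strictly beats $\bar\gamma$, with the hypothesis $|\psi^+-\psi^-|\le 2\pi$ (resp.\ the open condition for $\al=1$) being exactly what makes this last step go through.

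First I would normalize. Reading the requirement $\text{Arg}(\gamma(\pm T))=\psi^{\pm}$ as a constraint on a \emph{continuous} branch of the argument along $\gamma$, it pins the total angle swept by $\gamma$ to be $\psi^+-\psi^-$; after a rotation and, if needed, a reflection across $\rr$, assume $\psi^{\pm}=\pm\beta$ with $2\beta=|\psi^+-\psi^-|$, so $\beta\in[0,\pi]$ (and $\beta\in[0,\pi)$ when $\al=1$), and $\gamma(\pm T)=\bar\gamma(\pm T)=Re^{\pm i\beta}$ with $R:=|\bar\gamma(T)|$. Since the problem is invariant under $\gamma(t)\mapsto\overline{\gamma(-t)}$, it suffices to produce a solution $\eta:[0,T]\to\cc\setminus\{0\}$ with $\eta(T)=Re^{i\beta}$, $\dot\eta(0)$ purely imaginary, continuous argument running from $0$ to $\beta$, and $A_{\bar L}(\eta,[0,T])<A_{\bar L}(\bar\gamma,[0,T])=\tfrac12 A_{\bar L}(\bar\gamma,[-T,T])$ (the last equality because the integrand only sees $|\bar\gamma|,|\dot{\bar\gamma}|$, which are even in $t$); then $\gamma(t):=\overline{\eta(-t)}$ on $[-T,0]$ patches with $\eta$ into a $C^2$ solution on $[-T,T]$ with all the stated properties.

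To get $\eta$ I would minimize $A_{\bar L}(\cdot,[0,T])$ over the $H^1([0,T],\cc)$ paths with $\eta(T)=Re^{i\beta}$, $\eta(0)$ free on $\rr_{\ge 0}$, and argument increasing by $\beta$ — i.e.\ in the boundary-homotopy class of $\bar\gamma|_{[0,T]}$, which lies in the weak closure of this class. Coercivity (fixed endpoint at $T$) and weak lower semicontinuity give a minimizer $\eta_*$, with natural boundary condition $\dot\eta_*(0)\perp\rr$. A minimizer has only isolated collisions and conserves energy through them (standard), so near a collision it is, in polar coordinates, a rescaled rectilinear zero-energy arc going radially in along $\vartheta^-$ and out along $\vartheta^+$ with $|\vartheta^+-\vartheta^-|\le\pi$; replacing that corner by a short arc of a zero-energy orbit of \eqref{kepler problem} sweeping the same angle strictly lowers the action (see the estimate below, which applies since $\pi<\tfrac{2\pi}{2-\al}$), a contradiction. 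A collision at $t=T$ is impossible since $Re^{i\beta}\neq 0$, and for $\beta>0$ a collision at $t=0$ is impossible because an orbit through the origin is rectilinear and cannot increase its argument by $\beta\in(0,\pi)$; the case $\beta=0$ is disposed of directly (a shrinking circular orbit near $R$ beats $\bar\gamma$). Hence $\eta_*\in C^2([0,T],\cc\setminus\{0\})$ is a solution, and it remains to see $A_{\bar L}(\eta_*,[0,T])<A_{\bar L}(\bar\gamma,[0,T])$.

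This strict inequality, and the whole role of the hypothesis, comes from the following comparison, whose conceptual source is the Maupertuis reduction of the zero-energy flow: the metric $\sqrt{2V}\,|d\gamma|$ becomes, after $\rho=\tfrac{2}{2-\al}|\gamma|^{(2-\al)/2}$, the flat cone $d\rho^2+\big(\tfrac{2-\al}{2}\big)^2\rho^2\,d\theta^2$ of total angle $(2-\al)\pi$, and two rays at angular separation $2\beta$ are joined by a cone geodesic missing the apex exactly when $2\beta<\tfrac{2\pi}{2-\al}$ — which holds in \emph{every} admissible case ($2\beta\le 2\pi<\tfrac{2\pi}{2-\al}$ when $\al\in(1,2)$, with strict slack, and $2\beta<2\pi=\tfrac{2\pi}{2-\al}$ when $\al=1$). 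Concretely, for small $\epsilon>0$ one excises from $\bar\gamma$ the sub-arc of radius $\le\epsilon$ and glues in the corresponding rescaled half of a zero-energy orbit whose symmetric radius-$\epsilon$ arc sweeps angle exactly $2\beta$ (perihelion $\sim c_\beta\epsilon$), absorbing the $O(\epsilon^{(2+\al)/2})$ travel-time mismatch into a reparametrization of the outer part of $\bar\gamma$; the excised piece of $\bar\gamma$ has action $C_1\epsilon^{(2-\al)/2}(1+o(1))$ with $C_1=\tfrac{4}{2-\al}\sqrt{2M/\al}$, the inserted arc has action $C_2(\beta)\,\epsilon^{(2-\al)/2}(1+o(1))$, and one checks $\beta\mapsto C_2(\beta)$ is strictly increasing on $(0,\tfrac{\pi}{2-\al})$ with $C_2(\beta)\to C_1$ as $\beta\uparrow\tfrac{\pi}{2-\al}$, so $C_2(\beta)<C_1$. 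Hence the glued competitor — and therefore $\eta_*$ — has action strictly below $\bar\gamma$'s, completing the argument. I expect this monotonicity estimate to be the main obstacle: for $\al=1$ it is Gordon's classical computation on the Kepler problem, and the degenerate value $2\beta=2\pi$ is precisely where it fails, which is the content of Remark \ref{rem: Gordon}; for $\al\in(1,2)$ it should follow from the explicit quadratures for zero-energy $\al$-Kepler orbits. The remaining steps — the symmetric reduction, the gluing, and the existence and regularity of the minimizer — are routine.
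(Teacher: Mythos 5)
The paper itself does not prove Proposition \ref{coll1}: it is quoted from \cite{Y15a} (following \cite{TV07}, \cite{ST12}), with Marchal's argument in \cite{FGN11} for $\al=1$. So your attempt can only be measured against those proofs in spirit, and its quantitative core is indeed the right mechanism: the Jacobi-metric picture turning zero-energy motion into geodesics on a flat cone of total angle $(2-\al)\pi$, the resulting threshold $\frac{2\pi}{2-\al}$ for missing the apex (which is $>2\pi$ for $\al\in(1,2)$ and exactly $2\pi$ for $\al=1$, matching Remark \ref{rem: Gordon}), and the excision/gluing at scale $\ep$ with the excised piece of $\gb$ costing $C_1\ep^{(2-\al)/2}$, $C_1=\frac{4}{2-\al}\sqrt{2M/\al}$, against an inserted zero-energy arc costing $C_1\sin\bigl(\tfrac{(2-\al)\beta}{2}\bigr)\ep^{(2-\al)/2}(1+o(1))$, the time mismatch being absorbed at order $o(\ep^{(2-\al)/2})$. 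This correctly produces an $H^1$ competitor strictly beating $\gb$ under the stated angle condition.

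The gap is in promoting that competitor to the asserted $C^2$, collision-free \emph{solution}. Your route is to minimize on $[0,T]$ in a class with prescribed winding and a free boundary on $\rr_{\ge 0}$, and to claim any minimizer is collision-free. But the winding constraint is not weakly closed, so you must minimize over its weak closure; then, at an interior collision of the minimizer, the replacement arc has to keep the deformed path \emph{in that class}, so its swept angle is dictated by the class (it is $\vartheta^+-\vartheta^-$ only up to an integer multiple of $2\pi$ fixed by the constraint), not freely chosen equal to the geometric angle $\le\pi$. Nothing in your proposal bounds the winding of the minimizer's collision-free pieces, and for $\al=1$ a forced sweep of $2\pi$ is exactly the Gordon-degenerate case \cite{Go77} in which no strict local gain exists; hence collision-freeness of the minimizer is not established, and this is precisely the delicate point the cited references are invoked to settle. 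The boundary cases are also loose: restricting the free end to $\rr_{\ge 0}$ rather than the full fixed-point line $\rr$ spoils the natural boundary condition, the exclusion of a collision at $t=0$ assumes the minimizer is a genuine (rectilinear) solution through the collision rather than an ejection arc subject to the class constraint, and the admissible value $\beta=\pi$ for $\al\in(1,2)$ is not covered by your argument there. To close the proof you would need to control the minimizer's winding (e.g.\ by working on the cone or a covering space where the class is built into the configuration space) or argue directly from the zero-energy boundary-value problem, as is done in \cite{Y15a}; otherwise one should simply cite \cite{Y15a} and \cite{FGN11} as the paper does.
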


A proof of this result can be found in \cite{Y15a} following the ideas given in \cite{TV07} and \cite{ST12}. When $\al=1$, an alternative proof can be found in the appendix of \cite{FGN11}, where it was attributed to Marchal.  

By the symmetries of the Kepler-type problem, $\gm$ is symmetric with respect to the line passing through the origin and $\gm(0)$ with $\text{Arg}(\gm(0)) = \ey(\psi^- + \psi^+)$. Therefore $ A_{\lb}(\gm, T) < A_{\lb}(\bar{\gm}, T)$. This implies
\begin{cor} \label{coll2}
	Given an angle $\psi$, if the following condition holds
	$$ \begin{cases}
	 |\psi - \psi^+| \le \pi, & \text{ when } \al \in (1,2);\\
	 |\psi - \psi^+| < \pi, & \text{ when } \al =1, 
	\end{cases}$$
	then there is a solution of equation \eqref{kepler problem}, $\gm \in C^2([0, T], \cc \setminus \{0\})$, satisfying 
 	\begin{enumerate}
 	\item[(a).] $\gm(T) = \gb(T), \; \text{Arg}(\gm(0)) = \psi, \; \text{Arg}(\gm(T)) = \psi^+;$
 	\item[(b).] $A_{\lb}(\gm, T)< A_{\lb}(\gb, T)$.
 	\end{enumerate}
\end{cor}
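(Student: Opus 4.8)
The plan is to obtain Corollary \ref{coll2} from Proposition \ref{coll1} by a symmetrization argument that folds the two-sided collision-ejection problem onto one side. Given the target angles $\psi$ and $\psi^+$, I would first set $\psi^- := 2\psi - \psi^+$, so that $\psi = \ey(\psi^- + \psi^+)$ is the bisector of $\psi^-$ and $\psi^+$. With this choice $|\psi^+ - \psi^-| = 2|\psi - \psi^+|$, so the hypothesis of Corollary \ref{coll2} is exactly equivalent to that of Proposition \ref{coll1} applied to the pair $\psi^\pm$. Proposition \ref{coll1} then supplies a collision-free solution $\gm \in C^2([-T,T], \cc \setminus \{0\})$ of \eqref{kepler problem} with $\gm(\pm T) = \gb(\pm T)$, $\text{Arg}(\gm(\pm T)) = \psi^\pm$, and $A_{\lb}(\gm, [-T,T]) < A_{\lb}(\gb, [-T,T])$, where $\gb$ is now the parabolic collision-ejection solution attached to $\psi^\pm$.

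The candidate for Corollary \ref{coll2} is the restriction $\gm|_{[0,T]}$. Part (a) is read off directly: $\gm(T) = \gb(T)$ and $\text{Arg}(\gm(T)) = \psi^+$ come from Proposition \ref{coll1}(a), while $\text{Arg}(\gm(0)) = \psi$ follows from the symmetry of $\gm$ noted just before the statement, namely that $\gm$ is invariant under the reflection across the line $\ell_\psi = e^{i\psi}\rr$ composed with time reversal, so $\gm(0) \in \ell_\psi$; since $\gm$ is collision-free and $\psi^\pm$ both lie within angle $\pi$ of $\psi$, the point $\gm(0)$ sits on the ray of argument exactly $\psi$.

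For part (b) I would use the same symmetry to halve the action. Writing the symmetry as $\gm(-t) = e^{2i\psi}\,\overline{\gm(t)}$ and recalling that $\lb(\gm, \dot\gm) = \ey|\dot\gm|^2 + V(\gm)$ depends on $\gm$ and $\dot\gm$ only through $|\gm|$ and $|\dot\gm|$, the change of variables $t \mapsto -t$ gives $A_{\lb}(\gm, [-T,0]) = A_{\lb}(\gm, [0,T])$, hence $A_{\lb}(\gm, [-T,T]) = 2\,A_{\lb}(\gm, [0,T])$. In the same way the two halves of $\gb$ are radial parabolic ejections whose action is independent of the direction, so $A_{\lb}(\gb, [-T,T]) = 2\,A_{\lb}(\gb, [0,T])$. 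Combining these two identities with the strict inequality of Proposition \ref{coll1}(b) yields $A_{\lb}(\gm, [0,T]) < A_{\lb}(\gb, [0,T])$, which is part (b).

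The only point that needs care is the reflection symmetry of $\gm$ together with the branch of $\text{Arg}(\gm(0))$: one must know the solution produced by Proposition \ref{coll1} can be chosen symmetric about the bisecting line $\ell_\psi$ (this is precisely the symmetry statement recorded just above the corollary, which stems from the variational characterization of $\gm$ in \cite{Y15a}), and that $\gm$ does not wind around the origin in a way that would place $\gm(0)$ on the ray of argument $\psi + \pi$ rather than $\psi$; in the borderline case $|\psi - \psi^+| = \pi$, permitted only for $\al \in (1,2)$, this deserves a short separate check. Everything else reduces to elementary manipulations of the action integral.
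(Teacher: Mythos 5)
Your proposal is correct and follows essentially the same route as the paper: choose $\psi^- = 2\psi - \psi^+$ so that $\psi$ bisects $\psi^\pm$, invoke Proposition \ref{coll1}, and use the reflection symmetry of $\gm$ about the line through the origin and $\gm(0)$ (with $\text{Arg}(\gm(0)) = \ey(\psi^-+\psi^+) = \psi$) to halve both actions and restrict to $[0,T]$. Your extra remarks on the branch of $\text{Arg}(\gm(0))$ and the borderline case $|\psi-\psi^+|=\pi$ only make explicit details the paper leaves implicit.
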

Use the above result, now we can give a proof of Lemma \ref{deform22}.
\begin{proof}	

	[\textbf{Lemma \ref{deform22}}] 
	Recall that the homothetic-parabolic solution $\qb|_{[0, \infty)}$ is defined as following:
	$$ \qb(t)=(\qb_2(t), \qb_3(t)) = (\kp t)^{\frac{2}{2+\al}} s^+ =(\kp t)^{\frac{2}{2+\al}} (\rb_2 e^{i\tht_2^+}, \rb_3 e^{i\tht_3^+}),$$
	where $s^+=(\rb_2 e^{i\tht_2^+}, \rb_3 e^{i\tht_3^+})$ is a normalized central configuration of the $\bk$-body problem ($\bk =\{2,3\}$) with $\tht_2^+ = \tht_3^+ +\pi$ and $\rb_j$, $j =2,3$, defined in \eqref{rhop}. Then	
	\begin{equation}  \label{eqnl}
	\begin{split}
	A_{\lk}(\qb, T) & = \int_0^{T} \ey m_2 |\dot{\qb}_2(t)|^2 + \ey m_3 |\dot{\qb}_3(t)|^2 + \frac{m_2 m_3 }{\al |\qb_2(t) - \qb_3(t)|^{\al}} \, dt \\
	& = \frac{m_0m_3}{m_2} \int_0^T \ey |\dot{\qb}_2(t)|^2 + m_2 (\frac{m_2}{m_0})^{1+\al} \frac{1}{\al |\qb_3(t)|^{\al}} \,dt, \\	
	\end{split}
	\end{equation}		
	where $m_0 = m_2 +m_3$. Let $M=m_2(m_2/m_0)^{1 +\al}$ be the mass of the Kepler-type problem, then
	\begin{equation}
	\label{ineq3}  A_{\lk}(\qb, T) = \frac{m_0 m_3}{m_2}A_{\lb}(\qb_3, T).
	\end{equation}
    Since the energy of $\qb(t)$ is zero, we find 
    \begin{equation}
    \label{kp} \kp = \frac{2+\al}{\sqrt{2\al}}m_0^{-\frac{\al}{4}}(m_2m_3)^{\frac{2+\al}{4}}. 
    \end{equation}
    Because $\qb_3(t)= (\rb_3^{\frac{2+\al}{2}} \kp t)^{\frac{2}{2+\al}}e^{i\tht^+_3}$ and 
	$$ (\rb_3)^{\frac{2+\al}{2}} \kp = \frac{2+\al}{\sqrt{2\al}}\sqrt{m_2(m_2/m_0)^{1+\al}} = \frac{2+\al}{\sqrt{2\al}} \sqrt{M}, $$
	$\qb_3|_{[0, \infty)}$ is half of a parabolic collision-ejection solution of \eqref{kepler problem}. 

	When $\al \in (1,2)$ (or $\al=1$), for any $|\phi_3 - \tht_3^+| \le \pi$ (or $|\phi_3 - \tht_3^+| < \pi$), by Corollary \ref{coll2}, there is a $C^2$ function $z_3(t) = |z_3(t)| e^{i \tht_3(t)}$, $ t \in [0,T],$ satisfying 
	\begin{equation}
	 z_3 (T) = \qb_3(T), \;\tht_3(0) = \phi_3, \; \tht_3(T) = \tp_3;
	 \end{equation} 
	\begin{equation}
	\label{ineq1} A_{\lb}(z_3, T) < A_{\lb}(\qb_3, T).
	\end{equation}
	Let $z_3|_{[0,T]}$ be the path of $m_3$, and $ z_2(t) = \frac{m_3}{m_2} | z_3(t)| e^{i(\tht_3(t)+\pi)}, t \in [0,T],$ be the path of $m_2$. Then $z(t):= (z_2(t), z_3(t)), t \in [0, T],$ satisfies property $(a)$ and $(b)$ in Lemma \ref{deform22}, and a similar computation as in \eqref{eqnl} shows
		$$ A_{\lk} (z, T) = \frac{m_0 m_3}{m_2} A_{\lk}(z_3, T).$$
	Combining this with \eqref{ineq3} and \eqref{ineq1}, we get
	$$ A_{\lk}(z, T) < A_{\lk}(\qb, T).$$
\end{proof}

\mbox{}

\emph{Acknowledgements.} The author thanks Prof. Richard Montgomery for his valuable suggestion and interest in this work. He also wish to express his gratitude to Prof. Ke Zhang for his continued support and encouragement.

\bibliographystyle{abbrv}
\bibliography{ref-eight}

\end{document}